\documentclass[a4paper]{amsart}

%  -------------------  Math symbols ----------------------- 

\usepackage{amsmath}
\usepackage{amssymb}
\usepackage{bm} %boldmath
\usepackage{mathrsfs} %mathscr
\usepackage{tensor} %indices of tensors
\usepackage{braket} %large set (\Set{ \, | \, })
\usepackage{physics} %(partial) derivative

\DeclareMathOperator{\vol}{vol}
\DeclareMathOperator{\Vol}{Vol}

\DeclareMathOperator{\Ric}{Ric}

\DeclareMathOperator{\Hom}{Hom}

\DeclareMathOperator{\Aff}{Aff}

%  -------------------  Eqation numbering ----------------------- 

\numberwithin{equation}{section}

\usepackage{hyperref}
\usepackage{amsthm} %before cleveref

\usepackage{cleveref} 

\usepackage{autonum} %numbering only equations referred

\crefname{theorem}{Theorem}{Theorems}
\crefname{lemma}{Lemma}{Lemmas}
\crefname{proposition}{Proposition}{Propositions}
\crefname{corollary}{Corollary}{Corollaries}
\crefname{claim}{Claim}{Claims}
\crefname{assumption}{Assumption}{Assumptions}
\crefname{definition}{Definition}{Definitions}
\crefname{remark}{Remark}{Remarks}
\crefname{example}{Example}{Examples}
\crefname{equation}{}{}
\crefname{section}{Section}{Sections}
\crefname{subsection}{Section}{Sections}
\crefname{appendix}{Appendix}{Appendices}

\crefname{condition}{}{}

\creflabelformat{equation}{(#2#1#3)}
\creflabelformat{condition}{(#2#1#3)}

 %Eq.~1 to~ 3
 %Eq.~1 and~3
 
 %Thm~1, Prop~2, and Cor~3

\theoremstyle{plain}
\newtheorem{theorem}{Theorem}[section]
\newtheorem{lemma}[theorem]{Lemma}
\newtheorem{proposition}[theorem]{Proposition}
\newtheorem{corollary}[theorem]{Corollary}

\theoremstyle{definition}
\newtheorem{definition}[theorem]{Definition}

\theoremstyle{remark}

\newtheorem{example}[theorem]{Example}

\usepackage[non-sorted-cites,initials]{amsrefs}

\subjclass[2010]{32V05 (primary), 32V20 (secondary)}  

\keywords{$Q$-prime curvature; CR invariant powers of the sub-Laplacian; Sasakian $\eta$-Einstein manifolds} 

\AtBeginDocument{%
	\def\MR#1{}
}

\begin{document}
\title{Ambient constructions for Sasakian $\eta$-Einstein manifolds}
\author{Yuya Takeuchi}
\address{Graduate School of Mathematical Sciences \\ The University of Tokyo \\ 3-8-1 Komaba, Meguro, Tokyo 153-8914 Japan}
\email{ytake@ms.u-tokyo.ac.jp}

\begin{abstract}
The theory of ambient spaces is useful
to define CR invariant objects,
such as CR invariant powers of the sub-Laplacian,
the $P$-prime operator, and $Q$-prime curvature.
However in general,
it is difficult to write down these objects
in terms of the Tanaka-Webster connection.
In this paper,
we give those explicit formulas for CR manifolds satisfying an Einstein condition,
called Sasakian $\eta$-Einstein manifolds.
As an application,
we study properties of the first and the second variation
of the total $Q$-prime curvature at Sasakian $\eta$-Einstein manifolds.
\end{abstract}

\maketitle

\tableofcontents

\section{Introduction}
\label{section:introduction}

CR geometry has very similar properties to conformal geometry.
As an important example,
the theory of ambient spaces gives a powerful tool for both geometries.
An ambient space is an asymptotically Ricci-flat space
determined for a conformal or CR structure.
Many invariants for both geometries have been constructed
as obstructions to harmonic extensions,
for example, GJMS operators and the $Q$-curvature.

In conformal geometry,
a conformal class containing an Einstein metric plays an important role.
Several authors have studied relations between invariants constructed by using ambient spaces
and Einstein metrics,
while there exist few studies for corresponding relations in CR geometry.
The aim of this paper
is to reveal relations between the theory of ambient spaces
and CR manifolds satisfying an Einstein condition,
called Sasakian $\eta$-Einstein manifolds.

Via Fefferman construction,
we can see CR geometry as a special class of even dimensional conformal geometry;
we first recall the latter.
Let $N$ be a smooth manifold of even dimension $n$,
and $[g]$ a conformal class on $N$.
The metric bundle $\mathcal{G}$ is the principal $\mathbb{R}_{+}$-bundle over $N$
whose sections are the metrics in the conformal class.
The \emph{ambient space} is the space $\mathcal{G} \times \mathbb{R}$ 
with an asymptotically Ricci-flat metric $\widetilde{g}$ homogeneous of degree $2$
with respect to the $\mathbb{R}_{+}$-action.
We can define GJMS operators
as obstructions to harmonic extensions for the Laplacian of $\widetilde{g}$.
For $1 \leq k \leq n/2$,
the \emph{$k$-th GJMS operator} $P_{k}$
is a conformally invariant self-adjoint differential operator acting on densities.
The critical \emph{$Q$-curvature} is a smooth density 
determined for each representative of $[g]$.
It is not a conformal invariant,
but its integral, the \emph{total $Q$-curvature},
defines a global conformal invariant of $(N, [g])$.
See~\cite{Fefferman-Graham12} and references therein for detail.

Now assume that there exists an Einstein metric $g$ in $[g]$.
Gover~\cite{Gover06} and Fefferman-Graham~\cite{Fefferman-Graham12} proved that 
the $k$-th GJMS operator is decomposed into $k$ factors,
and each factor is the sum of the Laplacian and a constant depending on $n$
and the Einstein constant.
They also showed that
the critical $Q$-curvature with respect to $g$ is
a constant depending only on $n$ and the Einstein constant.
Moreover,
the variation of the total $Q$-curvature under deformations of conformal structures
is well-understood:
the first variation of the total $Q$-curvature at $[g]$ is always zero~\cite{Graham-Hirachi05},
and the second variation is written in terms of the Lichnerowicz Laplacian~\cite{Matsumoto13};
see also~\cite{Guillarmou-Moroianu-Schlenker}.

Let $M$ be a strictly pseudoconvex real hypersurface in an $(n+1)$-dimensional complex manifold $X$, 
which has the natural CR structure $T^{1, 0} M$ induced from the complex structure on $X$.
Consider the canonical bundle $\mathcal{X}$ of $X$ with the zero section removed,
which is a principal $\mathbb{C}^{\times}$-bundle over $X$.
Denote by $\mathcal{M}$ the restriction of $\mathcal{X}$ on $M$.
From the $\mathbb{C}^{\times}$-action,
we define the sheaf of homogeneous functions 
of degree $(w, w') \in \mathbb{R}^{2}$ on $\mathcal{X}$ and $\mathcal{M}$,
which is written as $\widetilde{\mathscr{E}}(w, w')$ and $\mathscr{E}(w, w')$ respectively.
A \emph{Fefferman defining function}
is a defining function $\bm{\rho} \in \widetilde{\mathscr{E}}(1, 1)$ of $\mathcal{M}$
satisfying the complex Monge-Amp\`{e}re equation 
\begin{equation}
	(\sqrt{-1} \partial \overline{\partial} \bm{\rho})^{n + 2}
	= - (\sqrt{-1})^{(n + 2)^{2}} \frac{(n + 1)!}{n + 2} (1+ \bm{\mathcal{O}} \bm{\rho}^{n + 2})
	d \bm{\zeta} \wedge \overline{d \bm{\zeta}},
\end{equation}
where $\bm{\mathcal{O}} \in \widetilde{\mathscr{E}}(- n - 2)$,
called the \emph{obstruction function},
and $\bm{\zeta}$ is the tautological $(n +1, 0)$-form on $\mathcal{X}$.
This defining function induces a Lorentz-K\"{a}hler metric
that is Ricci-flat to some order.
The CR counterparts of GJMS operators are \emph{CR invariant powers of the sub-Laplacian}.
For $(w, w') \in \mathbb{R}^{2}$ with $w + w' + n + 1 \in \{1, 2, \dots , n + 1 \}$,
Gover-Graham~\cite{Gover-Graham05}
defined a CR invariant operator $\bm{P}_{w, w'}$
whose leading part agrees with a power of the sub-Laplacian
acting on $\mathscr{E}(w, w')$ by using ambient spaces.
The natural CR analog of the critical $Q$-curvature in conformal geometry
is the \emph{$Q$-prime curvature}~\cite{Case-Yang13,Hirachi14}.
A contact form $\theta$ on $M$ is said to be \emph{pseudo-Einstein} 
if there exists a Hermitian metric $\bm{h}$ of $K_{X}$
such that $\bm{h}$ is flat on the pseudoconvex side
and
\begin{equation}
	\theta = (\sqrt{-1} / 2) (\overline{\partial} - \partial)(\bm{\rho} \cdot \bm{h}^{-1 / (n + 2)}) |_{M}
\end{equation}
holds;
here we consider $\bm{h}$ as a positive function in $\widetilde{\mathscr{E}}(n + 2, n + 2)$.
The $Q$-prime curvature $\bm{Q}'_{\theta}$ is a density defined for a pseudo-Einstein contact form $\theta$.
The transformation law of $\bm{Q}'_{\theta}$ under a conformal change
is given in terms of $\bm{P}_{0,0}$
and the \emph{$P$-prime operator} $\bm{P}'_{\theta}$.
Its integral, the \emph{total $Q$-prime curvature},
defines a global CR invariant of $M$~\cite{Marugame18},
which is denoted by $\overline{Q}'(M)$.

Sasakian $\eta$-Einstein manifolds are
CR manifolds satisfying an Einstein condition.
Let $(S, T^{1,0}S)$ be a $(2n + 1)$-dimensional strictly pseudoconvex CR manifold
and $\eta$ a contact form on $S$.
The triple $(S, T^{1,0} S, \eta)$ is called a \emph{Sasakian manifold}
if its Tanaka-Webster torsion vanishes identically.
Then the cone $C(S) = \mathbb{R}_{+} \times S$ of $S$ has a canonical complex structure,
and the level set $\{r =1\}$, where $r$ is the coordinate of $\mathbb{R}_{+}$,
is canonically isomorphic to $S$ as a CR manifold; 
in the following,
we identify $S$ with this level set.
A Sasakian manifold $(S, T^{1,0}S, \eta)$ is called a \emph{Sasakian $\eta$-Einstein manifold}
if its Tanaka-Webster Ricci curvature is a constant multiple of the Levi form;
in this case, we call this constant the \emph{Einstein constant} of $(S, T^{1,0}S, \eta)$.

Now we give the statements of our results, 
which reveal connections between the theory of ambient spaces in CR geometry
and Sasakian $\eta$-Einstein manifolds.

First we give formulas of CR invariant powers of the sub-Laplacian
and the $P$-prime operator on Sasakian $\eta$-Einstein manifolds.
To simplify the formulas,
we use unbold differential operators $P_{w, w'}$ and $P'_{\eta}$ acting on functions on $S$
instead of $\bm{P}_{w, w'}$ and $\bm{P}'_{\eta}$
that act on densities (\cref{def:operators-on-S}).
The operators $P_{w, w'}$ and $P'_{\eta}$ have expressions
in terms of the sub-Laplacian $\Delta_{b}$ and the Reeb vector field $\xi$.

\begin{theorem} \label{thm:formula-for-CR-invariant-powers-of-sub-Laplacian}
	Let $(S, T^{1,0}S, \eta)$ be a Sasakian $\eta$-Einstein manifold of dimension $2n + 1$
	with Einstein constant $(n+1) \lambda$.
	Then $P_{w, w'}$ for $k = w + w' + n + 1$ has the formula
	\begin{equation}
		P_{w,w'} = \prod_{j=0}^{k-1} L_{w'-w + k- 2j -1}.
	\end{equation}
	Here $L_{\mu}$ is the differential operator on $S$ defined by
	\begin{equation}
		L_{\mu} = \frac{1}{2} \Delta_{b} + \frac{\sqrt{-1}}{2}\mu \xi
		+ \frac{1}{4} \lambda (n - \mu)(n + \mu).
	\end{equation}
\end{theorem}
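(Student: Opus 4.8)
The plan is to compute everything on the ambient Lorentz--K\"ahler space $\widetilde{C(S)}$ attached to a Sasakian $\eta$-Einstein manifold, exploiting the fact that, under the Sasakian hypothesis, the cone $C(S)$ already carries a genuine K\"ahler metric, and under the $\eta$-Einstein hypothesis that metric is (a Ricci-flat modification of) a cone over a K\"ahler--Einstein base. First I would fix the contact form $\eta$ realizing the Einstein constant $(n+1)\lambda$ and write down the K\"ahler potential on $C(S) = \mathbb{R}_{+}\times S$ explicitly: the Sasakian structure gives the K\"ahler form $\tfrac{1}{2} d(r^{2}\eta)$ on $C(S)$, and the $\eta$-Einstein condition pins down the Ricci form of this cone metric. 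The key point is that, because the Tanaka--Webster torsion vanishes, the ambient Monge--Amp\`ere equation can be solved \emph{exactly} (not just to finite order): one obtains a Fefferman defining function $\bm\rho$ that is an explicit elementary function of $r$ (equivalently of the cone variable), with vanishing obstruction $\bm{\mathcal O}$, because an Einstein K\"ahler cone modifies to a Ricci-flat K\"ahler metric by a one-variable change of radial coordinate. I would record this normal form for $\bm\rho$ and the resulting ambient metric $\widetilde g$, together with the lift of the Reeb field $\xi$ and of the homogeneity vector field to $\widetilde{C(S)}$.

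Next I would unwind the definition of $\bm P_{w,w'}$ (Gover--Graham): it is obtained by taking a homogeneous extension $\widetilde f \in \widetilde{\mathscr E}(w,w')$ of a density $f \in \mathscr E(w,w')$ with $\widetilde\Delta\widetilde f$ vanishing to the appropriate order, and reading off the obstruction; equivalently, $\bm P_{w,w'}$ is built from the ambient Laplacian $\widetilde\Delta$ acting on the relevant homogeneity space. On the explicit ambient space from Step 1, $\widetilde\Delta$ splits into a radial ODE operator plus the horizontal data on $S$, and the horizontal data is governed by $\Delta_{b}$ and $\xi$ alone (no torsion or curvature corrections survive, by $\eta$-Einstein). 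Separating variables, the harmonic-extension problem becomes: find a polynomial solution in the radial variable whose coefficients are forced by applying, successively, shifted copies of a single second-order operator. Tracking the shift in the homogeneity bidegree at each step — which changes $w - w'$ by a fixed amount and hence changes the "spectral" constant — one sees exactly the telescoping product $\prod_{j=0}^{k-1} L_{w'-w+k-2j-1}$ emerge, with the quadratic-in-$\mu$ term $\tfrac14\lambda(n-\mu)(n+\mu)$ being precisely the contribution of the Einstein constant through the ambient Ricci-flat correction. I would verify the constant in $L_{\mu}$ by checking the leading symbol ($\tfrac12\Delta_{b}$), the first-order term ($\tfrac{\sqrt{-1}}{2}\mu\xi$, coming from the $\mathbb{C}^{\times}$-weight versus the $S^{1}$-weight of the density), and one boundary case such as $k=1$ where $P_{w,w'}$ reduces to a known CR invariant operator (the CR Yamabe-type operator), whose explicit form on Sasakian $\eta$-Einstein manifolds is computable directly.

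Finally, I would address the bookkeeping that converts the bold operators $\bm P_{w,w'}$ on densities to the unbold $P_{w,w'}$ on functions as in \cref{def:operators-on-S}: since $\eta$ is a fixed pseudo-Einstein contact form, densities trivialize canonically and the trivialization shifts weights by the CR scaling of $\eta$, which is exactly what produces the symmetric index range $w'-w+k-2j-1$ for $j = 0,\dots,k-1$ rather than a one-sided range. The main obstacle I anticipate is not conceptual but organizational: getting all the normalizations consistent — the power of $r$ in the K\"ahler potential, the $\tfrac12$ in the sub-Laplacian convention, the sign and factor in the identification $\theta \leftrightarrow \bm h$, and the precise homogeneity bidegree conventions of $\widetilde{\mathscr E}(w,w')$ — so that the quadratic constant comes out as $\tfrac14\lambda(n-\mu)(n+\mu)$ with the stated signs. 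I would therefore carry a single running example (say the standard sphere, where $\lambda = 1$ and both sides are classically known) through every step as a consistency check, and only then state the general computation.
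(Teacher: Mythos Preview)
Your proposal is correct in outline and would yield a valid proof, but it follows a genuinely different route from the paper. You propose to separate variables on the explicit ambient space, write the ambient Laplacian as a radial ODE coupled to the horizontal operators $\Delta_{b}$ and $\xi$, and read off the product formula from the recursion forced on the Taylor coefficients of the harmonic extension --- essentially the CR analogue of the Gover / Fefferman--Graham argument in the conformal Einstein case. The paper instead avoids the ODE analysis entirely and proceeds algebraically: it introduces multiplication operators $\bm{M}_{v,v'}$ (multiplication by $(z^{0})^{v}(\bar z^{0})^{v'}$) and the commutator $\bm{C}=[\bm{\Delta},\bm{M}_{1,0}]$, proves the three relations $[\bm{\Delta},\bm{M}_{v,0}]=v\,\bm{M}_{v-1,0}\bm{C}$, $[\bm{C},\bm{M}_{v,v'}]=\lambda v'\bm{M}_{v,v'-1}$, and $[\bm{\Delta},\bm{C}]=0$ (the last using Ricci-flatness of the ambient metric via $f_{AB}=0$ for $f=z^{0}$), and then shows by a short induction that $\bm{\Delta}^{k}=\bm{M}_{-k-1,0}(\bm{M}_{2,0}\bm{\Delta})^{k}\bm{M}_{-k+1,0}$ as an operator identity on the ambient space. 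Each factor $\bm{M}_{2,0}\bm{\Delta}$ restricted to the appropriate bidegree is then identified with $L_{\mu}$ by a single direct computation of the ambient Laplacian in the frame $(Z_{0},Z_{\alpha},Z_{n+1})$. What the paper's approach buys is that the bidegree bookkeeping you flag as the main obstacle is handled automatically by the $\bm{M}_{v,v'}$ formalism, and the factorization is an exact operator identity rather than an asymptotic statement extracted from a formal extension; what your approach buys is closer parallelism with the conformal story and no need for the slightly delicate verification that $\bm{C}$ commutes with $\bm{\Delta}$.
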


\begin{theorem} \label{thm:formula-for-P-prime-operator}
	Let $(S, T^{1,0}S, \eta)$ be as in \cref{thm:formula-for-CR-invariant-powers-of-sub-Laplacian}.
	For any CR pluriharmonic function $\Upsilon$ on $S$,
	\begin{equation}
		P'_{\eta} \Upsilon = n^{-1} P_{-1,-1} (\Delta_{b}^{2} + n^{2} \lambda \Delta_{b}) \Upsilon.
	\end{equation}
\end{theorem}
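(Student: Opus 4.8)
The plan is to compute $\bm{P}'_{\eta}$ directly from its ambient-space definition, specialised to the explicit Lorentz-K\"ahler ambient metric attached to a Sasakian $\eta$-Einstein structure --- the same data that produces \cref{thm:formula-for-CR-invariant-powers-of-sub-Laplacian} --- and to arrange the computation so that it runs in parallel with the proof of that theorem. On a Sasakian manifold $\xi$ is an infinitesimal automorphism of $(T^{1,0}S,\eta)$, so $\Delta_{b}$ and $\xi$ commute and the whole construction is $\xi$-equivariant; thus $\bm{P}'_{\eta}$ commutes with $\xi$, and it is moreover real. Writing a CR pluriharmonic function as the sum of a CR holomorphic function and its conjugate and decomposing the former into Reeb-weight components, it therefore suffices to evaluate both sides of the claimed identity on a CR holomorphic function $f$ with $\xi f = \sqrt{-1} m f$; the anti-CR-holomorphic summand is then handled by conjugation.

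On such an $f$ one has $\Delta_{b} f = -\sqrt{-1}\, n\, \xi f = n m f$, an identity valid for CR holomorphic functions on any pseudohermitian manifold (it comes from the commutation of the Tanaka-Webster Hessian of a scalar, which involves neither torsion nor curvature, and is precisely the relation that makes $L_{n}$ annihilate CR holomorphic functions). Hence $L_{\mu} f = \tfrac14 (n - \mu)\bigl(2 m + \lambda (n + \mu)\bigr) f$, and the product over $\mu \in \{ n - 2, n - 4, \dots, 2 - n \}$ collapses to $P_{-1,-1} f = (n-1)!\,\prod_{l=1}^{n-1}(m + l \lambda)\, f$. Since $n^{-1}(\Delta_{b}^{2} + n^{2} \lambda \Delta_{b}) f = n m (m + n \lambda) f$, the right-hand side of the asserted formula equals $n!\,\prod_{l=0}^{n}(m + l \lambda)\, f$, and the same constant on $\bar{f}$ by reality. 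Equivalently, on CR pluriharmonic functions $\Delta_{b}^{2} = - n^{2} \xi^{2}$, which degenerates the factors $L_{\pm n}$ in $P_{0,0} = L_{n} L_{-n} P_{-1,-1}$; this is the structural reason that a correction term, of order four, must appear at the critical weight.

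The main step is to show that $\bm{P}'_{\eta} f$ equals this same constant $n!\,\prod_{l=0}^{n}(m + l \lambda)$. Here the explicit ambient space does the work: a CR holomorphic function of a given Reeb weight on $S$ is the restriction of a holomorphic function on the K\"ahler cone, and this function lifts to an explicit homogeneous pluriharmonic function on the ambient space, so the ambient Poisson problem defining $\bm{P}'$ can be solved in closed form and its $\log$-term obstruction read off. Running the ambient recursion produces, stage by stage, the same factors $L_{\mu}$ as in the proof of \cref{thm:formula-for-CR-invariant-powers-of-sub-Laplacian}, except that at the critical stage the expected factor $L_{n}$ annihilates $f$, so its contribution is replaced by that of the $\log$-term; this replacement is what produces the additional fourth-order factor. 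Comparing with the previous paragraph then closes the argument.

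The hard part is this last step: extracting the $\log$-term contribution from the ambient recursion with the correct normalisation (in particular the constant $n^{-1}$), and checking that the degenerate factor is replaced cleanly by $\Delta_{b}^{2} + n^{2} \lambda \Delta_{b}$ rather than by some fourth-order operator that agrees with it only to leading order; one must also keep track of density weights so that $\bm{P}'_{\eta}$ on densities matches $P'_{\eta}$ on functions in the normalisation of \cref{def:operators-on-S}. A convenient check is that the resulting operator annihilates constants and is compatible with the transformation law of the total $Q$-prime curvature. An alternative to the ambient recursion would be to establish, for an arbitrary pseudo-Einstein contact form, a formula expressing $\bm{P}'_{\theta}$ on CR pluriharmonic functions through the family $\bm{P}_{w,w'}$ near the critical weight $w = w' = 0$, and then substitute \cref{thm:formula-for-CR-invariant-powers-of-sub-Laplacian}; the difficulty then migrates into proving that general formula.
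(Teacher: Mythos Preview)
Your plan is coherent and your computation of the right-hand side on a CR holomorphic Reeb-eigenfunction is correct, but the proposal has a genuine gap: you never carry out the ``hard part'' you yourself flag. Computing $P'_{\eta}f$ from the ambient definition \cref{eq:P-prime-operator} means applying $\bm{\Delta}^{n+1}$ to $\widetilde{\Upsilon}\log\bm{h}_{\rho}$ and extracting the boundary value; saying that the ambient recursion ``produces, stage by stage, the same factors $L_{\mu}$'' with one degenerate factor replaced by a log-term contribution is a description of the expected outcome, not a calculation, and this is precisely where the constant $n^{-1}$ and the exact fourth-order operator must be pinned down. There is also a secondary issue with the reduction: decomposing a CR holomorphic function into Reeb-weight components presumes a Fourier analysis along the Reeb orbits, which is available only in the regular or quasi-regular case; the theorem is stated for arbitrary Sasakian $\eta$-Einstein manifolds, and for irregular structures no such decomposition exists, so you would still need an argument (e.g.\ that both sides are differential operators built from $\Delta_{b}$ and $\xi$, hence determined by their symbols) to pass from eigenfunctions to general $\Upsilon$.

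The paper avoids both issues by never decomposing $\Upsilon$. It first proves, from the commutation relations among $\bm{\Delta}$, the multiplication operators $\bm{M}_{v,v'}$, and $\bm{C}=[\bm{\Delta},\bm{M}_{1,0}]$, the operator identity
\[
\bm{\Delta}^{n+1}=\bm{M}_{-1,-1}\,\bm{\Delta}^{\,n-1}\,\bm{M}_{0,n}\,\bm{\Delta}\,\bm{M}_{n+1,-n+1}\,\bm{\Delta}\,\bm{M}_{-n,0}
\]
(this is \cref{eq:power-of-Laplacian2} with $k=n+1$). Applied to $\widetilde{\Upsilon}\log|z^{0}|^{2}$, the two rightmost $\bm{\Delta}$'s absorb the logarithm and use only that $\widetilde{\Upsilon}$ is pluriharmonic; a short explicit computation (\cref{lem:boundary-value-of-intermediate-operator}) shows the result lies in $\widetilde{\mathscr{E}}(-1)$ and restricts on $\mathcal{S}$ to $-n^{-1}|z^{0}|^{-2}(\Delta_{b}^{2}+n^{2}\lambda\Delta_{b})\Upsilon$, where $\Delta_{b}^{2}+n^{2}\xi^{2}=0$ on $\mathscr{P}$ is used. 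The remaining $\bm{\Delta}^{n-1}$ then acts on a genuine element of $\widetilde{\mathscr{E}}(-1)$ and restricts to $\bm{P}_{-1,-1}$ by definition. This route is what actually produces the factor $n^{-1}$ and the precise fourth-order correction, and it works uniformly for every CR pluriharmonic $\Upsilon$ without any spectral hypothesis on the Reeb flow.
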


In the case of the sphere or the Heisenberg group, 
\cref{thm:formula-for-CR-invariant-powers-of-sub-Laplacian,thm:formula-for-P-prime-operator}
were already obtained by Graham~\cite{Graham84}
and Branson-Fontana-Morpurgo~\cite{Branson-Fontana-Morpurgo13},
respectively.
Remark that to compare our result to that in~\cite{Branson-Fontana-Morpurgo13}, 
we need to use the fact that any CR pluriharmonic function
is annihilated by $\Delta_{b}^{2} + n^{2} \xi^{2}$ on Sasakian manifolds.

We also compute the $Q$-prime curvature of Sasakian $\eta$-Einstein manifolds.
Similar to the above,
we use unbold $Q'_{\eta}$ instead of $\bm{Q'}_{\eta}$;
see \cref{eq:unbold-Q-prime-curvature}.

\begin{theorem} \label{thm:formula-for-Q-prime-curvature}
	Let $(S, T^{1,0}S, \eta)$ be as in \cref{thm:formula-for-CR-invariant-powers-of-sub-Laplacian}.
	Then the $Q$-prime curvature $Q'_{\eta}$ is written as
	\begin{equation}
		Q'_{\eta} = 2 (n!)^{2} \lambda^{n + 1}.
	\end{equation}
\end{theorem}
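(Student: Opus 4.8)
The plan is to reduce the computation of $Q'_\eta$ to a single ambient integral that is controlled by the explicit factorization of $\bm{P}_{0,0}$ obtained in \cref{thm:formula-for-CR-invariant-powers-of-sub-Laplacian}, together with the known fact that on a Sasakian $\eta$-Einstein manifold the contact form $\eta$ is pseudo-Einstein (so that $Q'_\eta$ is indeed defined). Recall that the $Q$-prime curvature is extracted from the ambient/Fefferman side as a logarithmic term in the expansion of a suitable potential: for a pseudo-Einstein contact form one has an expression of the shape $Q'_\eta = c_n \bigl(\log\text{-term of }\bm\rho\bigr)\big|_M$, or equivalently $Q'_\eta$ appears when one applies the critical operator to the constant density $1$ and then corrects by a boundary term. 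Concretely, on a Sasakian $\eta$-Einstein manifold the cone $C(S)$ with its Ricci-flat-to-high-order Kähler metric \emph{is} (a model for) the ambient space, and the Fefferman defining function, the obstruction function, and all the relevant densities can be written down explicitly in terms of $r$ and $\lambda$. So the first step is: write the ambient/cone data explicitly, identify $\bm\rho$ and the volume form, and read off $Q'_\eta$ from the defining expansion.

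The key steps, in order, are as follows. First, recall from \cref{def:operators-on-S} and the surrounding discussion the precise normalization relating $\bm{Q}'_\eta$ to the unbold $Q'_\eta$, and the defining property of $Q'_\eta$ in terms of the ambient construction (the logarithmic coefficient, or equivalently the failure of a harmonic extension at the critical weight). Second, specialize the factorization in \cref{thm:formula-for-CR-invariant-powers-of-sub-Laplacian} to $w = w' = 0$, i.e. $k = n+1$, to get
\begin{equation}
	\bm{P}_{0,0} = \prod_{j=0}^{n} L_{n - 2j},
	\qquad
	L_\mu = \tfrac12 \Delta_b + \tfrac{\sqrt{-1}}{2}\mu\,\xi + \tfrac14\lambda(n-\mu)(n+\mu),
\end{equation}
and note in particular that the factor with $\mu = \pm n$ has vanishing zeroth-order term, so $\bm{P}_{0,0}$ annihilates constants. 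This is exactly what forces the emergence of a curvature term when one tries to invert the relevant operator against $1$, and the coefficient of that term is the product of the \emph{nonzero} zeroth-order constants $\tfrac14\lambda(n-\mu)(n+\mu)$ over the remaining values $\mu = n-2, n-4, \dots, -(n-2)$ (together with the contribution of the ``almost-kernel'' factors, handled by a derivative/limiting argument). Third, evaluate that product: writing $\mu = n - 2j$ for $j = 1, \dots, n-1$ one gets $\prod_j \tfrac14\lambda\cdot 2j\cdot 2(n-j) = \lambda^{n-1}\prod_{j=1}^{n-1} j(n-j) = \lambda^{n-1}\bigl((n-1)!\bigr)^2$, and the two boundary factors contribute the remaining $\lambda^2$ and a combinatorial constant; one then checks that the total assembles to $2(n!)^2\lambda^{n+1}$ after accounting for the normalization constant $c_n$ and the $n^{-1}$-type factors appearing in \cref{thm:formula-for-P-prime-operator}. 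Fourth, cross-check against the known cases: for the standard sphere $\lambda = 1$ and the value $2(n!)^2$ must agree with Hirachi's and Case--Yang's computation of $\bm Q'$ on $S^{2n+1}$, which pins down all constants.

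The main obstacle is the bookkeeping of constants and the treatment of the two factors $L_{\pm n}$ whose constant term vanishes. Precisely: $Q'_\eta$ is \emph{not} simply ``$\bm P_{0,0}$ applied to something'' — it is a secondary quantity that appears because $\bm P_{0,0}$ degenerates at the critical weight, so one must run the ambient harmonic-extension argument one order further, extracting the logarithmic term. Concretely I expect to write the critical GJMS-type operator as $\bm P_{0,0} = \bigl(\tfrac12\Delta_b + \tfrac{\sqrt{-1}n}{2}\xi\bigr)\bigl(\tfrac12\Delta_b - \tfrac{\sqrt{-1}n}{2}\xi\bigr)\prod_{j=1}^{n-1}L_{n-2j}$, observe that the Sasakian identity $\Delta_b^2 + n^2\xi^2$ annihilates CR pluriharmonic functions (used already in \cref{thm:formula-for-P-prime-operator}), and use $P'_\eta$ from \cref{thm:formula-for-P-prime-operator} applied to $1$ — but $1$ is CR pluriharmonic — to see that $Q'_\eta$ equals (a constant times) the zeroth-order term that survives when the differential parts are stripped off. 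Carrying out that limiting argument carefully, and matching the normalization so that the sphere case comes out to $2(n!)^2$, is where the real work lies; the rest is the product evaluation above, which is routine.
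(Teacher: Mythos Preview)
Your plan has a genuine gap. The $Q$-prime curvature is \emph{not} recoverable from $P_{0,0}$ or $P'_\eta$ applied to constants: by the factorization, $P_{0,0}1 = \prod_{j=0}^{n} \tfrac14\lambda\cdot 2j\cdot 2(n-j) = 0$ because the factors $L_{\pm n}$ have \emph{zero} constant term (not ``the remaining $\lambda^2$'' as you write), and by \cref{thm:formula-for-P-prime-operator} also $P'_\eta 1 = n^{-1}P_{-1,-1}(\Delta_b^2+n^2\lambda\Delta_b)1 = 0$. So neither operator, applied to $1$, sees $Q'_\eta$ at all. The ``limiting/derivative argument'' you defer to is therefore the entire substance of the proof, and you have not said what is being differentiated or why the outcome is $Q'_\eta$ rather than zero.

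The paper's argument avoids all of this and is a three-line ambient computation. By \cref{def:Q-prime-curvature} together with $\bm{h}_S = |z^0|^{2(n+2)}$ (\cref{cor:flat-Hermitian-metric}) one has $\bm{Q}'_\eta = \bm{\Delta}^{n+1}\bigl(\log|z^0|^2\bigr)^2\big|_{\mathcal{S}}$, and one iterates the ambient Laplacian directly. Since $\log|z^0|^2$ is pluriharmonic, and the inverse metric \cref{eq:matrix-representation} gives $\bm{g}^{0\bar 0}=-\lambda|z^0|^{-2}$, one finds
\[
\bm{\Delta}\bigl(\log|z^0|^2\bigr)^2 = 2\lambda|z^0|^{-2},
\qquad
\bm{\Delta}|z^0|^{-2l} = l^2\lambda\,|z^0|^{-2(l+1)},
\]
so that $\bm{\Delta}^{n+1}\bigl(\log|z^0|^2\bigr)^2 = 2\lambda\cdot\prod_{l=1}^{n}l^2\lambda\cdot|z^0|^{-2(n+1)} = 2(n!)^2\lambda^{n+1}|z^0|^{-2(n+1)}$, whence $Q'_\eta = 2(n!)^2\lambda^{n+1}$. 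No factorization of $P_{0,0}$, no $P'_\eta$, no limiting procedure, and no normalization-matching against the sphere is needed. Your very first step---write the ambient data explicitly and read off $Q'_\eta$ from its definition---is already the whole proof; the detour through \cref{thm:formula-for-CR-invariant-powers-of-sub-Laplacian,thm:formula-for-P-prime-operator} only obscures it.
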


As an application of this formula,
we will compute the total $Q$-prime curvature for some Sasakian $\eta$-Einstein manifolds
in \cref{section:computation-of-total-Q-prime-curvature}.

Note that 
Case-Gover~\cite{Case-Gover17} also obtained the same results
as in \cref{thm:formula-for-CR-invariant-powers-of-sub-Laplacian,thm:formula-for-P-prime-operator,thm:formula-for-Q-prime-curvature}
in terms of tractors.

Finally,
we consider the variation of the total $Q$-prime curvature
under deformations of real hypersurfaces.

\begin{proposition} \label{prop:first-variation-for-SeE}
	Let $(S, T^{1,0}S, \eta)$ be a closed $(2n + 1)$-dimensional Sasakian $\eta$-Einstein manifold, 
	and $(M_{t})_{t \in (-1, 1)}$ be a smooth family of closed real hypersurfaces in $C(S)$
	such that $M_{0} = S$.
	Then the first variation of the total $Q$-prime curvature vanishes; 
	that is,
	\begin{equation}
		\left. \frac{d}{dt} \right|_{t=0} \overline{Q}'(M_{t}) = 0.
	\end{equation}
\end{proposition}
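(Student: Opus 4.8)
The plan is to reduce the first variation of $\overline{Q}'(M_t)$ to an integral of an explicit local quantity against the first-order normal deformation, and then exploit the formula $Q'_\eta = 2(n!)^2 \lambda^{n+1}$ from \cref{thm:formula-for-Q-prime-curvature} together with the pseudo-Einstein structure of $S$ to see that this integral vanishes. First, I would recall the general first variation formula for $\overline{Q}'$ under deformations of a real hypersurface inside a fixed complex manifold (here $C(S)$): by Marugame's work and the analogue of the Graham--Hirachi argument in conformal geometry, the derivative $\frac{d}{dt}\big|_{t=0}\overline{Q}'(M_t)$ is given by pairing a CR invariant "linearized $Q$-prime" density — essentially the $P'$-operator applied to the logarithmic term, or equivalently a term built from $\bm{P}_{0,0}$ and $\bm{Q}'$ — against the infinitesimal deformation $\dot\rho$ of the defining function restricted to $S$. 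The key structural point is that this integrand is, up to a divergence, a constant (coming from $Q'_\eta$ being constant) times $\dot\rho|_S$, so it suffices to understand $\int_S \dot\rho \, \eta\wedge(d\eta)^n$ or a closely related quantity.

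Next I would use the fact that $S$ is Sasakian $\eta$-Einstein, hence carries a pseudo-Einstein contact form $\eta$ which is moreover \emph{volume-normalized} in a strong sense: on a Sasakian $\eta$-Einstein manifold the Tanaka--Webster scalar curvature is constant, and the relevant secondary invariant ($\bm{Q}$-curvature, or the transgression term controlling the first variation) is itself constant. Concretely, the Graham--Hirachi-type first variation of $\overline{Q}'$ along a one-parameter family is controlled by the $\bm{Q}'$-density and the operator $\bm{P}'_\eta$ acting on the deformation parameter viewed as a CR pluriharmonic (or more generally a suitable) density; the critical observation is that $\bm{P}'_\eta$ annihilates constants in the appropriate density weight, and the "constant part" of the deformation pairs against $Q'_\eta$, which is constant, to produce $\text{const}\cdot\frac{d}{dt}\big|_{t=0}\Vol(M_t)$ type terms that must be handled. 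Because $S=\{r=1\}$ sits inside the cone with a transverse $\mathbb{R}_+$-action, I would use homogeneity: the $Q$-prime curvature and the volume density scale in a controlled way under the $\mathbb{R}_+$-action, and the total $Q$-prime curvature is scale-invariant, forcing the linear term to vanish.

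The cleanest route, which I would pursue as the primary strategy, is: (i) write $\frac{d}{dt}\big|_{t=0}\overline{Q}'(M_t) = \int_S (\text{linearized integrand}) \cdot \dot\rho$; (ii) show by the explicit formulas of \cref{thm:formula-for-CR-invariant-powers-of-sub-Laplacian,thm:formula-for-P-prime-operator,thm:formula-for-Q-prime-curvature} that on a Sasakian $\eta$-Einstein manifold this linearized integrand is a pure divergence (a sub-Laplacian of something, using that $\Delta_b$ and $\xi$ have no constant term obstruction when $\lambda$ is tuned, or more robustly that the integrand lies in the image of $P_{-1,-1}$ which has constant kernel/cokernel behavior captured by the product formula); (iii) conclude the integral is zero by integration by parts on the closed manifold $S$. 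The main obstacle will be step (i)–(ii): correctly identifying the linearized first-variation integrand for deformations of the \emph{hypersurface} (not of the contact form on a fixed $S$), i.e. transcribing Marugame's global invariance of $\overline{Q}'$ into an honest first-variation formula with an explicit local integrand, and then checking that the explicit operator-product formula for $P_{-1,-1}$ at Einstein constant $(n+1)\lambda$ indeed kills this integrand modulo divergences. Once the integrand is pinned down, the vanishing is a short integration-by-parts argument using that $S$ is closed; everything hard is in getting the variational formula and feeding in the three preceding theorems.
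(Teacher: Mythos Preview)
Your proposal misidentifies the relevant first-variation formula. For a deformation of the \emph{hypersurface} $M_t$ inside a fixed ambient complex manifold, the Hirachi--Marugame--Matsumoto formula (recalled as \cref{eq:first-variation} in the paper) reads
\[
\left.\frac{d}{dt}\right|_{t=0}\overline{Q}'(M_t)=c_n\int_M \bm{\varphi}\,\bm{\mathcal{O}},
\]
where $\bm{\mathcal{O}}$ is the obstruction function of $M$. The integrand is \emph{not} built from $\bm{P}'_\eta$, $\bm{P}_{0,0}$, or $\bm{Q}'_\eta$; those operators control the variation of $\bm{Q}'$ under a change of pseudo-Einstein \emph{contact form} on a fixed $M$ (the transformation law \cref{eq:transformation-law-of-Q-prime-curvature}), which is a different problem. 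So steps (i)--(ii) of your plan aim at the wrong target: the product formulas of \cref{thm:formula-for-CR-invariant-powers-of-sub-Laplacian,thm:formula-for-P-prime-operator} and the constancy of $Q'_\eta$ in \cref{thm:formula-for-Q-prime-curvature} are irrelevant here, and the claimed ``constant times $\dot\rho$ plus divergence'' structure of the integrand is not what the variational formula actually gives.

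The paper's proof is a two-line application of the correct formula: one checks that the hypothesis of \cref{eq:first-variation} is met (there is a flat Hermitian metric $\bm{h}_S$ on $K_{C(S)}$, constructed in \cref{cor:flat-Hermitian-metric}), and then invokes \cref{prop:Fefferman-defining-function-of-SeE}, which shows that the Fefferman defining function $\bm{\rho}_S$ satisfies the Monge--Amp\`ere equation \emph{exactly}, so $\bm{\mathcal{O}}\equiv 0$ on $\mathcal{X}$. The integral is then trivially zero. The substantive work is all in Section~\ref{section:construction-of-Fefferman-defining-function}, not in any integration-by-parts argument on $S$.
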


This proposition follows from the fact that 
the obstruction function $\bm{\mathcal{O}}$ vanishes for $S$;
see \cref{prop:Fefferman-defining-function-of-SeE}.
Moreover,
the second variation is written in terms of $P_{1,1}$,
proved by Hirachi-Marugame-Matsumoto~\cite[Theorem 1.2]{Hirachi-Marugame-Matsumoto17}.
Spectral properties of $\Delta_{b}$ and $\xi$ give the following

\begin{theorem} \label{thm:second-variation-for-non-negative-SeE}
	Let $(S, T^{1,0}S, \eta)$ and $(M_{t})_{t \in (-1, 1)}$
	be as in \cref{prop:first-variation-for-SeE}.
	Assume that $n = 1$ or the Einstein constant is non-negative.
	Then, the second variation of the total $Q$-prime curvature is non-positive;
	that is,
	\begin{equation}
		\left. \frac{d^{2}}{dt^{2}} \right|_{t=0} \overline{Q}'(M_{t}) \leq 0.
	\end{equation}
	Moreover, the equality holds if and only if
	$(M_{t})_{ \in (-1, 1)}$ is infinitesimally trivial as a deformation of CR structures
	(see \cref{def:infinitesimally-trivial}).
\end{theorem}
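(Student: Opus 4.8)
The plan is to reduce the claimed inequality to a spectral estimate for the operator $P_{1,1}$ acting on the relevant space of infinitesimal deformations, and then to analyze that operator using \cref{thm:formula-for-CR-invariant-powers-of-sub-Laplacian} together with the joint spectral decomposition of $\Delta_b$ and $-\sqrt{-1}\xi$ on the closed Sasakian $\eta$-Einstein manifold $S$. By \cite[Theorem 1.2]{Hirachi-Marugame-Matsumoto17}, the second variation of $\overline{Q}'(M_t)$ at $t=0$ is, up to a fixed positive constant, the negative of a quadratic form of the shape $\langle \varphi, P_{1,1}\varphi\rangle$ (integrated against the volume form of $\eta$), where $\varphi$ encodes the infinitesimal deformation of the hypersurface — concretely the normal component of the deformation vector field, interpreted as a section of the appropriate density bundle, and where the infinitesimally trivial directions are exactly those for which $\varphi$ lies in the kernel of the operator. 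So the theorem will follow once we show that $P_{1,1}$ is a non-negative operator on $L^2(S)$, with kernel consisting precisely of the CR functions that generate trivial deformations. First I would recall from \cref{section:computation-of-total-Q-prime-curvature} or the preceding sections the precise statement of the second variation formula in our normalization, identify the density bundle on which $P_{1,1}$ acts, and translate it via \cref{def:operators-on-S} into the unbold operator on functions.

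Next I would use \cref{thm:formula-for-CR-invariant-powers-of-sub-Laplacian} with $w=w'=1$, so $k = w+w'+n+1 = n+3$, giving
\begin{equation}
	P_{1,1} = \prod_{j=0}^{n+2} L_{\,n+2-2j},
\end{equation}
where $L_{\mu} = \tfrac12\Delta_b + \tfrac{\sqrt{-1}}{2}\mu\xi + \tfrac14\lambda(n-\mu)(n+\mu)$. Since $(S,T^{1,0}S,\eta)$ is Sasakian, $\Delta_b$ and $\sqrt{-1}\xi$ commute and are simultaneously diagonalizable with real spectrum; on a joint eigenspace with $\Delta_b = \nu \geq 0$ and $-\sqrt{-1}\xi$ having eigenvalue $m$, the operator $L_\mu$ acts as the scalar $\tfrac12\nu + \tfrac12\mu m + \tfrac14\lambda(n^2-\mu^2)$. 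Thus $P_{1,1}$ acts as the product $\prod_{j=0}^{n+2}\bigl(\tfrac12\nu + \tfrac12(n+2-2j)m + \tfrac14\lambda(n^2 - (n+2-2j)^2)\bigr)$. The task is to show this product is $\geq 0$ on every eigenspace that actually occurs. The key structural input, valid on Sasakian manifolds, is the constraint relating $\nu$ and $m$: for the eigenvalue pairs $(\nu,m)$ that arise from genuine functions one has $\nu = |m|\,(n + \text{something nonnegative})$ or, more precisely, the Kohn Laplacian factorization $\Delta_b = \Box_b + \overline{\Box}_b$ (up to sign conventions) and non-negativity of $\Box_b,\overline{\Box}_b$ forces $\nu \geq (n+|m|)|m|$ with the spectral index $m$ an integer when $\xi$ generates a circle action — or in general $|m|\leq$ a bound governed by $\nu$. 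I would pair the $j$-th and $(n+2-j)$-th factors (which have opposite values of $\mu$, namely $\pm(n+2-2j)$), getting for each pair a quadratic in the eigendata of the form $\bigl(\tfrac12\nu + \tfrac14\lambda(n^2-\mu^2)\bigr)^2 - \tfrac14\mu^2 m^2$, and then show each such expression is non-negative when $\lambda \geq 0$ (or $n=1$) using the Sasakian eigenvalue constraint; the middle factor (if $n$ is odd, $\mu=0$) is $\tfrac12\nu + \tfrac14\lambda n^2 \geq 0$ outright.

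The main obstacle I anticipate is precisely this case analysis on the signs of the factors $L_\mu$: when $\lambda$ is positive the constant term $\tfrac14\lambda(n-\mu)(n+\mu)$ is positive for $|\mu| < n$ but negative for $|\mu| > n$, and the indices $\mu = n+2-2j$ with $j = 0,\dots,n+2$ range over $\{-(n+2), -n, \dots, n, n+2\}$, so the two extreme factors $L_{\pm(n+2)}$ have a negative constant term $-\tfrac14\lambda\cdot 2(2n+2) = -\lambda(n+1)$ and could a priori be negative. Controlling their product requires exploiting that the $\xi$-eigenvalue $m$ is small relative to $\nu$ (the Sasakian/Kohn–Laplacian estimate), so that $\tfrac12\nu$ dominates; and when $\lambda < 0$ with $n=1$ there are only $k = 4$ factors and the computation is short enough to do by hand. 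I would therefore: (i) establish the eigenvalue inequality on Sasakian manifolds (citing the $\Delta_b^2 + n^2\xi^2$ annihilation of pluriharmonic functions mentioned after \cref{thm:formula-for-P-prime-operator}, and the standard lower bound for $\Box_b$), (ii) show non-negativity of each paired product under the hypothesis $\lambda\geq 0$, (iii) handle $n=1$, $\lambda<0$ separately, and (iv) characterize equality: $\langle\varphi,P_{1,1}\varphi\rangle = 0$ iff $\varphi$ is supported on joint eigenspaces where some factor vanishes, and I would identify those with exactly the span of the CR functions (the kernel of $\Box_b$, i.e. the $j$ giving $L_{\mu}$ with $\mu = n$, $\nu = 0$ contributions and their conjugates) that, by the description in \cref{def:infinitesimally-trivial} and the second variation formula, correspond to infinitesimally trivial deformations. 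This last identification is where I would need to be most careful to match conventions with \cite{Hirachi-Marugame-Matsumoto17}.
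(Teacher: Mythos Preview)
Your overall strategy---reduce to showing $P_{1,1}\geq 0$ via the factorization in \cref{thm:formula-for-CR-invariant-powers-of-sub-Laplacian} and a joint spectral decomposition for $\Delta_b$ and $\sqrt{-1}\xi$---is exactly what the paper does. The gap is in how you handle the extreme factors. Your symmetric pairing $L_{\mu}L_{-\mu}$ works for $|\mu|\leq n$ (using only $\nu\geq n|m|$, which is the correct Sasakian constraint; your stated bound $\nu\geq(n+|m|)|m|$ is wrong), but it fails for $\mu=n+2$: on an eigenspace with $\Delta_b=\nu$, $\sqrt{-1}\xi=m$ one gets
\[
L_{n+2}L_{-(n+2)}=\Bigl(\tfrac{1}{2}\nu-(n+1)\lambda\Bigr)^2-\tfrac{(n+2)^2}{4}m^2,
\]
and nothing in $\nu\geq n|m|$ prevents this from being negative (e.g.\ $\lambda=0$, $\nu=n+1$, $m=1$). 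The same obstruction arises in your $n=1$, $\lambda<0$ case for $L_{3}L_{-3}$. So the pairwise argument, as written, does not close.

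The missing ingredient is the paper's \cref{lem:kernel-of-edge-operator}: one computes directly in the Tanaka--Webster calculus that $4D_\eta^{*}D_\eta = L_{n+2}L_{n}$, hence $L_{n+2}L_{n}\geq 0$ and $L_{-n-2}L_{-n}\geq 0$ with kernels $\ker D_\eta$ and $\ker\overline{D}_\eta$. This pairs the four outermost factors as $(L_{n+2}L_n)(L_{-n}L_{-n-2})$ rather than symmetrically, and simultaneously yields both the non-negativity and the identification of when equality holds. With this in hand the middle factors (for $2\leq j\leq n$) are each \emph{individually} non-negative when $\lambda\geq 0$ by the simple estimate $\tfrac12 p+\tfrac12(n+2-2j)q\geq \tfrac{p}{2}(1-|n+2-2j|/n)\geq 0$, and for $n=1$ there are no middle factors. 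Finally, your guess that the kernel consists of ``CR functions (the kernel of $\Box_b$)'' is not correct: equality in the second variation corresponds to $\varphi\in\Re\ker D_\eta$, which is strictly larger than the CR holomorphic functions in general, and it is precisely the identity $4D_\eta^{*}D_\eta=L_{n+2}L_n$ that makes this identification transparent.
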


On the other hand,
the conclusion of \cref{thm:second-variation-for-non-negative-SeE}
does not hold for Sasakian $\eta$-Einstein manifolds
of dimension greater than three and with negative Einstein constant.

\begin{theorem} \label{thm:second-variation-for-negative-SeE}
	For each integer $n \geq 2$,
	there exist a closed Sasakian $\eta$-Einstein manifold of dimension $2n + 1$
	with negative Einstein constant
	and an infinitesimally non-trivial smooth deformation
	such that the second variation of the total $Q$-prime curvature along this deformation is equal to zero.
	If $n$ is even,
	one can also find an example of a Sasakian $\eta$-Einstein manifold and a smooth deformation
	such that the second variation of the total $Q$-prime curvature along this deformation is positive.
\end{theorem}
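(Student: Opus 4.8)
The plan is to use the second variation formula of Hirachi–Marugame–Matsumoto, which (after the first variation vanishes by Proposition 1.5) expresses $\frac{d^2}{dt^2}\big|_{t=0}\overline{Q}'(M_t)$ as a quadratic pairing of the infinitesimal deformation tensor $\psi \in \mathscr{E}_{(1,1)}$ — a section encoding the variation of the CR structure — against $\bm{P}_{1,1}\psi$, up to a positive normalizing constant. Using the product formula of Theorem 1.1 with $k = 1+1+n+1 = n+3$, we can diagonalize $P_{1,1}$ on the $L^2$-spectral decomposition of $S$ with respect to the commuting self-adjoint operators $\Delta_b$ and $-\sqrt{-1}\xi$. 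On a closed Sasakian manifold these generate a joint spectral decomposition, and $P_{1,1}$ acts on a joint eigenspace (eigenvalue $\sigma$ for $\Delta_b$, eigenvalue $m$ for $-\sqrt{-1}\xi$) by the scalar $\prod_{j=0}^{n+2} \bigl(\tfrac12\sigma + \tfrac12 m \cdot(\text{sign/index shift}) + \tfrac14\lambda(n-\mu_j)(n+\mu_j)\bigr)$ with $\mu_j = k - 2j - 1 = n+2-2j$, since here $w'-w = 0$. The sign of each factor — hence of the whole product — must be controlled; the main point is that for infinitesimal deformations the relevant $(\sigma, m)$ are constrained, and the constant terms $\tfrac14\lambda(n-\mu_j)(n+\mu_j)$ have a definite sign when $\lambda \geq 0$ (they are $\leq 0$ since $|\mu_j| \leq n$), so each $L_{\mu_j}$ is a "shifted nonnegative Laplacian minus something" that one can show acts with a definite sign on the deformation space.

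For the construction of the examples witnessing Theorem 1.7, I would proceed concretely. First I would take $S$ to be a compact quotient of the anti-de Sitter-type Sasakian space form with negative Einstein constant — for instance a circle bundle over a compact quotient of the complex ball $\mathbb{B}^n$, i.e. the boundary of a disc bundle over a compact complex hyperbolic manifold — which is Sasakian $\eta$-Einstein with $\lambda < 0$ by construction. Then I would produce an infinitesimally non-trivial deformation $\psi$ supported in a single joint eigenspace of $(\Delta_b, -\sqrt{-1}\xi)$ chosen so that one of the factors $L_{\mu_j}$ in the product $P_{1,1} = \prod_j L_{\mu_j}$ vanishes on $\psi$: this is possible precisely because with $\lambda < 0$ the constant $\tfrac14\lambda(n-\mu_j)(n+\mu_j)$ is now $\geq 0$ for $|\mu_j|\leq n$, so the equation $\tfrac12\sigma + \tfrac12 m\cdot(\dots) + \tfrac14\lambda(n-\mu_j)(n+\mu_j) = 0$ has solutions in the admissible spectrum when $n \geq 2$ (so that $k = n+3 \geq 5$ gives enough factors with $\mu_j$ strictly between $-n$ and $n$, namely $\mu_j = n+2-2j$ for $j = 2, \dots, n$). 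The required check is that the chosen eigenspace actually contains a genuine (not trivial, not tangential) infinitesimal deformation of the hypersurface $S$ inside $C(S)$, which one verifies using the description of deformation tensors as sections of $\mathscr{E}_{(1,1)}$ together with the Kohn–Rossi/deformation complex — here the abundance of harmonic $(0,1)$-forms with values in $T^{1,0}$ on complex hyperbolic quotients, via Matsushima-type vanishing running the other way, gives the needed supply.

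For the sharper "$n$ even $\Rightarrow$ strictly positive" claim, I would instead choose the eigenspace so that an \emph{odd} number of factors $L_{\mu_j}$ are negative on $\psi$ while the rest are positive, forcing the product — and hence the second variation up to the fixed positive constant — to be strictly positive. The parity constraint is exactly where $n$ even enters: the symmetric list $\mu_j = n+2-2j$, $j = 0, \dots, n+2$, contains $\mu = 0$ iff $n$ is even, and this central factor $L_0 = \tfrac12\Delta_b + \tfrac14\lambda n^2$ can be made negative in isolation (on a low eigenvalue $\sigma$ with $m=0$ when $\lambda<0$), flipping the overall sign. Concretely I would pick $\psi$ in a joint eigenspace with $\sigma$ small and $m = 0$, arrange that $L_0$ is the unique negative factor, and read off positivity.

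The main obstacle, and the step requiring the most care, is the third one in the second paragraph: exhibiting, on the compact complex-hyperbolic circle bundles, an honest infinitesimally non-trivial deformation tensor lying in the prescribed joint eigenspace of $(\Delta_b, -\sqrt{-1}\xi)$. This is a representation-theoretic/spectral computation — identifying the $(\Delta_b, \xi)$-spectrum on sections of $\mathscr{E}_{(1,1)}$ over such quotients and matching an eigenvalue to the zero (or sign-flip) locus of one of the factors $L_{\mu_j}$ — and it is where the hypothesis $n \geq 2$ is genuinely used, since for $n = 1$ there are too few factors (only $k = 4$, with $\mu_j \in \{2, 0, -2\}$... wait, $k-2j-1$ for $j=0,\dots,3$ gives $\{3,1,-1,-3\}$, all with $|\mu_j| \geq 1$ but the construction of a sign-flip inside the admissible deformation spectrum fails) and Theorem 1.6 already forces the second variation to be nonpositive with equality only in the trivial case. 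All remaining steps are bookkeeping with the explicit product formula of Theorem 1.1 and the positivity of the Hirachi–Marugame–Matsumoto constant.
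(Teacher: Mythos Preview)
Your overall architecture is correct: reduce to the sign of $\int_S \varphi\, P_{1,1}\varphi$ via the second variation formula, use the factorization $P_{1,1}=\prod_{j=0}^{n+2}L_{n+2-2j}$, and look for a function $\varphi$ in a joint $(\Delta_b,\sqrt{-1}\xi)$-eigenspace on which a prescribed factor vanishes or changes sign. Two small corrections first: the infinitesimal deformation is a \emph{function} $\varphi\in\mathscr{E}(1)_{\mathbb{R}}\cong C^\infty(S)$, not a tensor in ``$\mathscr{E}_{(1,1)}$''; and the constant $c'_n=-2/((n+1)(n+2))$ is \emph{negative}, so a positive second variation corresponds to a \emph{negative} eigenvalue of $P_{1,1}$, contrary to what you wrote.

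The substantive gap is in producing the eigenfunction. To make a middle factor $L_{n+2-2j}$ vanish on an eigenfunction with $m=0$ you must hit the \emph{specific} value $p=2(j-1)(n+1-j)/(n+1)$ in the spectrum of $\Delta_b$, and for the even-$n$ positivity you need $p$ genuinely small. On a circle bundle over a fixed ball quotient the spectrum is what it is; there is no mechanism in your proposal to arrange that it contains a prescribed number, and the appeal to ``Matsushima-type vanishing running the other way'' does not address this. The paper's key idea, which you are missing, is to build $S$ as the unit circle bundle in $K_Y^{-1}$ over a \emph{product of hyperbolic Riemann surfaces} $Y=\Sigma^n$: then pullbacks of Laplace eigenfunctions from one factor $\Sigma$ land in $\mathscr{H}_{p,0}$, and the crucial input is that as the hyperbolic metric on $\Sigma$ varies over moduli, $\lambda_1(\Delta_{g_\Sigma})$ is continuous and realizes every value in $(0,2)$ (Buser for the infimum $0$, Jenni for values above $2$). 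The intermediate value theorem then lets one \emph{tune} $p$ exactly to $2(n-1)/(n+1)$ for the vanishing statement, and take $p$ arbitrarily small for the even-$n$ statement, where the factors pair off into squares except for the central one at $j=m+1$. Without this tuning device your construction does not go through.
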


This paper is organized as follows.
In \cref{section:preliminaries},
we recall basic concepts of CR manifolds, ambient spaces, and Sasakian manifolds.
\cref{section:construction-of-Fefferman-defining-function} is devoted to
the construction of a Fefferman defining function.
In \cref{section:proof-of-factorization-theorem},
we provide the proofs of \cref{thm:formula-for-CR-invariant-powers-of-sub-Laplacian,thm:formula-for-P-prime-operator}.
\cref{section:variation-of-total-Q-prime-curvature} deals with
the variation of the total $Q$-prime curvature.
In \cref{section:computation-of-total-Q-prime-curvature},
we compute the total $Q$-prime curvature for some examples.

\medskip

\noindent
{\em Notation.}
We use Einstein's summation convention and assume that 
\begin{itemize}
	\item uppercase Latin indices $A, B, C, \dots$ run from $0$ to $n + 1$;
	\item lowercase Latin indices $a, b, c, \dots$ run from $1$ to $n + 1$; 
	\item lowercase Greek indices $\alpha, \beta, \gamma, \dots$ run from $1$ to $n$. 
\end{itemize}
We use $\theta$ and $T$ (resp. $\eta$ and $\xi$) 
for a contact form and the Reeb vector field
on a CR manifold (resp. Sasakian manifold).

\medskip

\section{Preliminaries}
\label{section:preliminaries}

\subsection{CR manifolds} \label{subsection:CR-manifold}

Let $M$ be a smooth $(2n + 1)$-dimensional manifold without boundary.
A \emph{CR structure} is a complex $n$-dimensional subbundle $T^{1,0}M$
of the complexified tangent bundle $TM \otimes \mathbb{C}$ such that
\begin{itemize}
	\item $T^{1,0}M \cap T^{0,1}M = 0$, where $T^{0,1}M = \overline{T^{1,0}M}$;
	\item $[\Gamma(T^{1,0}M), \Gamma(T^{1,0}M)] \subset \Gamma(T^{1,0}M)$.
\end{itemize}
For example, 
if $M$ is a real hypersurface in a complex manifold $X$,
then $M$ has the natural CR structure
\begin{equation}
	T^{1,0} M = (TM \otimes \mathbb{C}) \cap T^{1,0}X.
\end{equation}
Set $H M = \Re T^{1,0}M$
and let $J \colon H M \to H M$ be the unique complex structure on $H M$ 
such that
\begin{equation}
	T^{1,0}M = \ker(J-\sqrt{-1} \colon H M \otimes \mathbb{C}
	\to H M \otimes \mathbb{C}).
\end{equation}
In the following, assume that there exists a nowhere vanishing $1$-form $\theta$
that annihilates $H M$.
The \emph{Levi form} $\mathcal{L}_{\theta}$ with respect to $\theta$ is the Hermitian form
on $T^{1,0} M$ defined by
\begin{equation}
	\mathcal{L}_{\theta}(Z, W) = - \sqrt{-1} \, d\theta(Z, \overline{W}), \quad Z,W \in T^{1,0} M.
\end{equation}
We consider only \emph{strictly pseudoconvex CR manifolds},
i.e., CR manifolds that has a positive definite Levi form
for some $\theta$;
such a $\theta$ is called a \emph{contact form}.
Denote by $T$ the \emph{Reeb vector field} with respect to $\theta$; 
that is, the unique vector field satisfying
\begin{equation}
	\theta(T) = 1, \quad \iota_{T} d\theta = 0.
\end{equation}
Then the tangent bundle has the decomposition
$TM = H M \oplus \mathbb{R} T$.
One can define a Riemannian metric $g_{\theta}$ on $M$ by 
\begin{equation}
	g_{\theta}(X,Y) = \frac{1}{2} d\theta(X, JY) + \theta(X) \theta(Y),
	\quad X,Y \in TM.
\end{equation}
Here, we extend $J$ to an endomorphism on $TM$ by $JT = 0$.
Take a local frame $(Z_{\alpha})$ of $T^{1, 0} M$.
Then we have a local frame $(T, Z_{\alpha}, Z_{\overline{\beta}} = \overline{Z_{\beta}})$
of $TM \otimes \mathbb{C}$,
and take the dual frame $(\theta, \theta^{\alpha}, \theta^{\overline{\beta}})$.
For this frame,
the $2$-form $d \theta$ is written as follows:
\begin{equation}
	d \theta = \sqrt{-1} \tensor{l}{_{\alpha} _{\overline{\beta}}}
	\theta^{\alpha} \wedge \theta^{\overline{\beta}},
\end{equation}
where $(\tensor{l}{_{\alpha} _{\overline{\beta}}})$ is a positive Hermitian matrix.
In the following,
we will use $\tensor{l}{_{\alpha} _{\overline{\beta}}}$
and its inverse $\tensor{l}{^{\alpha} ^{\overline{\beta}}}$
to lower and raise indices of various tensors.

A contact form $\theta$ induces a canonical connection $\nabla$,
called the \emph{Tanaka-Webster connection} with respect to $\theta$.
This connection is given by
\begin{equation}
	\nabla Z_{\alpha}
	= \tensor{\omega}{_{\alpha} ^{\beta}} Z_{\beta}, \quad
	\nabla Z_{\overline{\alpha}}
	= \tensor{\omega}{_{\overline{\alpha}} ^{\overline{\beta}}} Z_{\overline{\beta}}, \quad
	\nabla T = 0,
\end{equation}
and $\tensor{\omega}{_{\alpha} ^{\beta}},
\tensor{\omega}{_{\overline{\alpha}} ^{\overline{\beta}}}
= \overline{\tensor{\omega}{_{\alpha} ^{\beta}}}$
satisfy the following structure equations:
\begin{align}
	d \tensor{l}{_{\alpha} _{\overline{\beta}}}
	&= \tensor{\omega}{_{\alpha} _{\overline{\beta}}}
	+ \tensor{\omega}{_{\overline{\beta}} _{\alpha}}, \\
	d \theta^{\alpha}
	&= \theta^{\beta} \wedge \tensor{\omega}{_{\beta}^{\alpha}}
	+ \tensor{A}{^{\alpha} _{\overline{\beta}}} \, \theta \wedge \theta^{\overline{\beta}}.
\end{align}
Here the tensor
$\tensor{A}{_{\alpha} _{\beta}} = \overline{\tensor{A}{_{\overline{\alpha}} _{\overline{\beta}}}}$,
is symmetric and called the \emph{Tanaka-Webster torsion}.
We denote the components of successive covariant derivatives of a tensor
by subscripts preceded by the comma,
for example, $\tensor{K}{_{\alpha} _{\overline{\beta}} _{,} _{\gamma}}$;
we omit the comma if the derivatives are applied to functions.
With this notation,
introduce the operator $\overline{\partial}_{b}$ acting on $C^{\infty}(M)$ by
\begin{equation}
	\overline{\partial}_{b} f = \tensor{f}{_{\overline{\alpha}}} \theta^{\overline{\alpha}}.
\end{equation}
A smooth function $f$ is called a \emph{CR holomorphic function}
if $\overline{\partial}_{b} f = 0$.
A \emph{CR pluriharmonic function} is a real-valued smooth function
that is locally the real part of a CR holomorphic function.
We denote by $\mathscr{P}$ the space of CR pluriharmonic functions.
If $M$ is a real hypersurface in a complex manifold $X$,
then any CR holomorphic function (resp.\ CR pluriharmonic function)
can be extended to a holomorphic function (resp.\ pluriharmonic function)
on the pseudoconvex side.

The curvature form $\tensor{\Omega}{_{\alpha}^{\beta}}$ of $\nabla$
is given by
\begin{equation} \label{eq:curvature-form}
	\begin{split}
		\tensor{\Omega}{_{\alpha}^{\beta}}
		&= 
		\tensor{R}{_{\alpha} ^{\beta} _{\gamma} _{\overline{\delta}}}
		\theta^{\gamma} \wedge \theta^{\overline{\delta}}
		+ \tensor{A}{_{\alpha} _{\gamma} _{,} ^{\beta}} \theta^{\gamma} \wedge \theta
		- \tensor{A}{^{\beta} _{\overline{\gamma}} _{,} _{\alpha}}
		\theta^{\overline{\gamma}} \wedge \theta \\
		& \quad - \sqrt{-1} \tensor{A}{_{\alpha} _{\gamma}} \theta^{\gamma} \wedge \theta^{\beta}
		+ \sqrt{-1} \tensor{l}{_{\alpha} _{\overline{\gamma}}} \tensor{A}{^{\beta} _{\overline{\delta}}}
		\theta^{\overline{\gamma}} \wedge \theta^{\overline{\delta}}.
	\end{split}
\end{equation}
We call the tensor $\tensor{R}{_{\alpha} ^{\beta} _{\gamma} _{\overline{\delta}}}$
the \emph{Tanaka-Webster curvature}.
This tensor has the symmetry 
\begin{equation}
	\tensor{R}{_{\alpha} _{\overline{\beta}} _{\gamma} _{\overline{\delta}}}
	= \tensor{R}{_{\gamma} _{\overline{\beta}} _{\alpha} _{\overline{\delta}}}
	= \tensor{R}{_{\alpha} _{\overline{\delta}} _{\gamma} _{\overline{\beta}}}.
\end{equation}
Define the \emph{Tanaka-Webster Ricci curvature} $\tensor{\Ric}{_{\alpha}_{\overline{\beta}}}$ by
\begin{equation}
	\tensor{\Ric}{_{\alpha} _{\overline{\beta}}}
	= \tensor{R}{_{\alpha} ^{\gamma} _{\gamma} _{\overline{\beta}}}
	= \tensor{R}{_{\gamma} ^{\gamma} _{\alpha} _{\overline{\beta}}}
	= \tensor{R}{_{\alpha} _{\overline{\beta}} _{\gamma} ^{\gamma}}
	= \tensor{R}{_{\gamma} _{\overline{\beta}} _{\alpha} ^{\gamma}}.
\end{equation}

Commutators of derivatives are important for computation.
However, we postpone formulas of commutators until we introduce Sasakian manifolds,
since commutation relations are simplified for such CR manifolds.

\subsection{Ambient space}
\label{subsection:ambient-space}

Let $X$ be an $(n + 1)$-dimensional complex manifold
and $\pi_{\mathcal{X}} \colon \mathcal{X} = K_{X}^{\times} \to X$
the total space of the canonical bundle of $X$
with the zero section removed.
For $\mu \in \mathbb{C}^{\times}$,
define the dilation $\bm{\delta}_{\mu} \colon \mathcal{X} \to \mathcal{X}$
by the scalar multiplication $\bm{\delta}_{\mu}(\xi) = \mu^{n + 2} \xi$
for $\xi \in \mathcal{X}$.
Denote by $Z_{0}$ the holomorphic vector field generating $\bm{\delta}_{\mu}$;
that is, $Z_{0} = (d/d\mu)|_{\mu = 1} \bm{\delta}_{\mu}^{*}$.
A smooth function $\bm{f}$ on an open set of $\mathcal{X}$ is said to be
\emph{homogeneous of degree} $(w, w')$ for $w, w' \in \mathbb{R}$
if $Z_{0} \bm{f} = w \bm{f}$ and $\overline{Z_{0}} \bm{f} = w' \bm{f}$ hold.
We write $\widetilde{\mathscr{E}}(w, w')$
for the sheaf of smooth homogeneous functions of degree $(w, w')$. 
To simplify notation, write $\widetilde{\mathscr{E}}(w) = \widetilde{\mathscr{E}}(w, w)$.
Note that a Hermitian metric of $K_{X}$ can be identified with
a positive function in $\widetilde{\mathscr{E}}(n + 2)$.

Let $z = (z^{1}, \dots , z^{n + 1})$ be a local coordinate of $X$.
Then the fiber coordinate $\zeta$ of $\mathcal{X}$ is defined by
$\zeta dz^{1} \wedge \dots \wedge dz^{n + 1}$.
Choose a branch $z^{0} = \zeta^{1/(n + 2)}$,
which is called a \emph{branched fiber coordinate} in this paper.
Then $Z_{0}$ is equal to $z^{0} \partial_{z^{0}}$,
and for each homogeneous function $\bm{f}$ of degree $(w, w')$,
there exists a smooth function $f$ locally on $X$ such that
$\bm{f} = (z^{0})^{w} (\bar{z}^{0})^{w'} f$.

Let $M$ be a strictly pseudoconvex real hypersurface in $X$
and $\mathcal{M} = \pi_{\mathcal{X}}^{-1} (M)$.
Then $\mathcal{M}$ is a pseudoconvex real hypersurface in $\mathcal{X}$.
Denote by $\mathscr{E}(w, w')$
the sheaf of homogeneous functions of degree $(w, w')$ on $\mathcal{M}$.

For a defining function
$\bm{\rho} \in \widetilde{\mathscr{E}}(1)$ of $\mathcal{M}$,
the $(1,1)$-form $dd^{c} \bm{\rho}$ defines
a Lorentz-K\"{a}hler metric $\bm{g}[\bm{\rho}]$ in a neighborhood of $\mathcal{M}$,
where $d^{c} = (\sqrt{-1}/2)(\overline{\partial} - \partial)$.
We normalize $\bm{\rho}$ by a complex Monge-Amp\`ere equation.
Take the tautological $(n + 1, 0)$-form $\bm{\zeta}$ on $K_{X}$.
Then
\begin{equation} \label{eq:canonical-volume-form}
	\vol_{\mathcal{X}} =(\sqrt{-1})^{(n + 2)^{2}} d \bm{\zeta}
	\wedge \overline{d \bm{\zeta}}
\end{equation}
gives a volume form on $\mathcal{X}$.

\begin{proposition}[{\cite[Proposition 2.2]{Hirachi-Marugame-Matsumoto17}}]
\label{prop:Fefferman-defining-function}
	There exists a defining function $\bm{\rho} \in \widetilde{\mathscr{E}}(1)$
	of $\mathcal{M}$ such that
	\begin{equation} \label{eq:Fefferman-defining-function}
		(d d^{c} \bm{\rho})^{n + 2} = k_{n} (1+ \bm{\mathcal{O}} \bm{\rho}^{n + 2}) \vol_{\mathcal{X}},
	\end{equation}
	where $\bm{\mathcal{O}} \in \widetilde{\mathscr{E}}(-n - 2)$
	and $k_{n} = - (n + 1)!/(n + 2)$.
	Moreover, such a $\bm{\rho}$ is unique modulo $O(\bm{\rho}^{n + 3})$,
	and $\bm{\mathcal{O}}$ modulo $O(\bm{\rho})$ is independent of the choice of $\bm{\rho}$.
\end{proposition}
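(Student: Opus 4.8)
The plan is to reproduce, in the homogeneous bundle setting, Fefferman's classical construction of an approximate solution to the complex Monge--Amp\`ere equation, the only genuinely new point being that the volume form $\vol_{\mathcal{X}}$ in \eqref{eq:Fefferman-defining-function} is intrinsic to $\mathcal{X}$ (it is built from the tautological form $\bm{\zeta}$), so that the resulting obstruction is coordinate independent. Write $\bm{J}[\bm{\rho}] := (dd^{c}\bm{\rho})^{n+2}/\vol_{\mathcal{X}}$ for the Monge--Amp\`ere quotient. A weight count ($\bm{\rho}$ has bidegree $(1,1)$, so $(dd^{c}\bm{\rho})^{n+2}$ has bidegree $(n+2,n+2)$, and $\vol_{\mathcal{X}}$ also has bidegree $(n+2,n+2)$) shows $\bm{J}[\bm{\rho}] \in \widetilde{\mathscr{E}}(0)$, so \eqref{eq:Fefferman-defining-function} is exactly the requirement that $\bm{J}[\bm{\rho}]/k_{n} - 1$ vanish to order $n+2$ along $\mathcal{M}$, with $\bm{\mathcal{O}} := \bm{\rho}^{-n-2}(\bm{J}[\bm{\rho}]/k_{n} - 1) \in \widetilde{\mathscr{E}}(-n-2)$. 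In a branched fiber coordinate this becomes Fefferman's scalar complex Monge--Amp\`ere equation for the defining function $\rho$ on $X$ with $\bm{\rho} = |z^{0}|^{2}\rho$, a weight $|z^{0}|^{2(n+1)}$ being absorbed into the normalization; one can work there throughout and then check that the answer is independent of the coordinate chosen.

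I would then carry out the standard order-by-order correction. Start from an arbitrary defining function $\bm{\rho}_{0} \in \widetilde{\mathscr{E}}(1)$ of $\mathcal{M}$; strict pseudoconvexity forces $\bm{J}[\bm{\rho}_{0}]|_{\mathcal{M}}/k_{n} > 0$, so after replacing $\bm{\rho}_{0}$ by $(k_{n}/\bm{J}[\bm{\rho}_{0}]|_{\mathcal{M}})^{1/(n+2)}\bm{\rho}_{0}$ we may assume $\bm{J}[\bm{\rho}]/k_{n} = 1 + O(\bm{\rho})$. Inductively, suppose $\bm{J}[\bm{\rho}]/k_{n} = 1 + A\,\bm{\rho}^{s} + O(\bm{\rho}^{s+1})$ with $A \in \widetilde{\mathscr{E}}(-s)$ and $1 \le s \le n+1$. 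Replacing $\bm{\rho}$ by $\bm{\rho}(1 + \eta\,\bm{\rho}^{s})$, $\eta \in \widetilde{\mathscr{E}}(-s)$, the key computation --- linearizing the determinant $(dd^{c}\bm{\rho})^{n+2}$ and retaining only the doubly normally differentiated contribution modulo $O(\bm{\rho}^{s+1})$ --- yields
\begin{equation}
	\bm{J}[\bm{\rho}(1 + \eta\bm{\rho}^{s})]/k_{n} = 1 + (A + c_{n,s}\,\eta)\,\bm{\rho}^{s} + O(\bm{\rho}^{s+1}),
\end{equation}
where $c_{n,s}$ is a constant depending only on $n$ and $s$ --- a nonzero multiple of $(s+1)(n+2-s)$ --- which vanishes exactly when $s = n+2$. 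Since $s \ne n+2$ in this range, the choice $\eta = -A/c_{n,s}$ removes the order-$s$ error. Iterating over $s = 1, \dots, n+1$ gives a defining function with $\bm{J}[\bm{\rho}]/k_{n} - 1 = O(\bm{\rho}^{n+2})$, and setting $\bm{\mathcal{O}}$ as above yields \eqref{eq:Fefferman-defining-function}. Every step here is purely algebraic (division by the nonzero number $c_{n,s}$, with no differential equation to solve), so no patching is needed and the whole construction is global in a neighborhood of $\mathcal{M}$; this proves existence.

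For uniqueness, suppose $\bm{\rho}$ and $\widehat{\bm{\rho}}$ both satisfy \eqref{eq:Fefferman-defining-function}, and write $\widehat{\bm{\rho}} = \bm{\rho}(1 + \psi)$ with $\psi \in \widetilde{\mathscr{E}}(0)$. If $\psi \ne O(\bm{\rho}^{n+2})$, expand $\psi = c\,\bm{\rho}^{s} + O(\bm{\rho}^{s+1})$ with $1 \le s \le n+1$ and $c \in \widetilde{\mathscr{E}}(-s)$ not identically zero; the same linearization gives $\bm{J}[\widehat{\bm{\rho}}]/k_{n} = \bm{J}[\bm{\rho}]/k_{n} + c_{n,s}\,c\,\bm{\rho}^{s} + O(\bm{\rho}^{s+1})$, and since $\bm{J}[\bm{\rho}]/k_{n} = 1 + O(\bm{\rho}^{n+2})$ this makes $\bm{J}[\widehat{\bm{\rho}}]/k_{n}$ have a nonvanishing order-$s$ term with $s \le n+1$, contradicting $\bm{J}[\widehat{\bm{\rho}}]/k_{n} = 1 + O(\bm{\rho}^{n+2})$. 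Hence $\psi = O(\bm{\rho}^{n+2})$, i.e.\ $\widehat{\bm{\rho}} = \bm{\rho} + O(\bm{\rho}^{n+3})$. Feeding $\widehat{\bm{\rho}} \equiv \bm{\rho}$ modulo $O(\bm{\rho}^{n+3})$ back into \eqref{eq:Fefferman-defining-function} and comparing the order-$(n+2)$ coefficients shows $\widehat{\bm{\mathcal{O}}} \equiv \bm{\mathcal{O}}$ modulo $O(\bm{\rho})$.

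I expect the main obstacle to be the linearization computation behind the displayed identity: one must expand $(dd^{c}\bm{\rho})^{n+2}$ to first order in $\eta$, verify that modulo $O(\bm{\rho}^{s+1})$ only the term in which both derivatives fall on $\bm{\rho}^{s}$ survives, and extract the coefficient $c_{n,s}$ --- in particular checking that it is a nonzero constant for each $s \in \{1,\dots,n+1\}$ and vanishes precisely at $s = n+2$. This is where the distinguished value $n+2$, the size of the Monge--Amp\`ere determinant, enters and is responsible for the obstruction. The remaining ingredients --- the weight bookkeeping, the reduction to a branched fiber coordinate, the positivity of $\bm{J}[\bm{\rho}_{0}]|_{\mathcal{M}}/k_{n}$, and the algebraic nature of each correction step --- are routine.
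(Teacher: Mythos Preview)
The paper does not prove this proposition: it is stated with a citation to \cite[Proposition 2.2]{Hirachi-Marugame-Matsumoto17} and no argument is given. Your outline is exactly the classical Fefferman iteration adapted to the homogeneous bundle $\mathcal{X}$, which is the standard route and is what the cited reference carries out; so your plan is correct and there is nothing to compare against in the present paper.

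Two small points worth tightening. First, in the uniqueness step you assume the leading order of $\psi$ satisfies $s\ge 1$; you should also dispose of $s=0$. This is easy---both $\bm{\rho}$ and $\widehat{\bm{\rho}}$ are defining functions with $\bm{J}/k_n=1$ on $\mathcal{M}$, and evaluating $\bm{J}[\bm{\rho}(1+\psi)]$ on $\mathcal{M}$ forces $(1+\psi)|_{\mathcal{M}}^{\,n+2}=1$, hence $\psi|_{\mathcal{M}}=0$---but it should be said. Second, your claimed form ``a nonzero multiple of $(s+1)(n+2-s)$'' for $c_{n,s}$ is not quite the constant that falls out of the bordered Monge--Amp\`ere linearization; what actually matters, and what you should verify explicitly when you do the computation, is only that $c_{n,s}\neq 0$ for $0\le s\le n+1$ and $c_{n,n+2}=0$. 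The rest of your weight bookkeeping and the observation that each correction is purely algebraic (hence global) are fine.
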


We call such a $\bm{\rho}$ a \emph{Fefferman defining function}
and the Lorentz-K\"{a}hler metric $\bm{g}[\bm{\rho}]$ with respect to $\bm{\rho}$
an \emph{ambient metric}.
The function $\bm{\mathcal{O}}$ is called the \emph{obstruction function}.

Next we introduce the pseudo-Einstein condition for a contact form on $M$,
which is necessary for the definition of the $Q$-prime curvature.
To this end,
recall a correspondence between Hermitian metrics of $K_{X}$ and defining functions of $M$. 
For a Hermitian metric $\bm{h}$ of $K_{X}$,
the function $\bm{\rho} \cdot \bm{h}^{- 1 / (n + 2)} \in \widetilde{\mathscr{E}}(0)$
gives a defining function of $M$.
Conversely,
let $\rho$ be a defining function of $M$.
Then $\bm{h}_{\rho} = (\bm{\rho} / \rho)^{n + 2} \in \widetilde{\mathscr{E}}(n + 2)$
defines a Hermitian metric of $K_{X}$ near $M$.
Moreover,
if $\rho$ is normalized by a contact form $\theta$
(i.e., $\theta = d^{c} \rho|_{M}$), 
then $\bm{h}_{\rho}|_{\mathcal{M}} \in \mathscr{E}(n + 2)$
depends only on $\theta$,
denoted by $\bm{h}_{\theta}$.
In particular,
if we fix a contact form $\theta$,
then multiplication by $\bm{h}_{\theta}^{- w/(n + 2)}$
defines an identification between $\mathscr{E}(w)$ and $C^{\infty}(M)$.

\begin{definition} \label{def:pseudo-Einstein}
	A contact form $\theta$ on $M$ is said to be \emph{pseudo-Einstein}
	if there exists a defining function $\rho$ normalized by $\theta$
	such that $\bm{h}_{\rho}$ is flat on the pseudoconvex side;
	see~\cite[Proposition 2.6]{Hirachi-Marugame-Matsumoto17} for equivalent conditions.
	Another contact form $\widehat{\theta} = e^{\Upsilon} \theta$ 
	is pseudo-Einstein if and only if $\Upsilon \in \mathscr{P}$.
\end{definition}

Before the end of this subsection,
note that there exists a canonical bijection
between $\mathscr{E}(- n - 1)$ and the space of volume forms on $M$.
For $\bm{\varphi} \in \mathscr{E}(- n - 1)$,
the $(2n + 1)$-form $\bm{\varphi} \, d^{c} \bm{\rho} \wedge (d d^{c} \bm{\rho})^{n}$
descends to a volume form on $M$.
Hence we denote by $\int_{M} \bm{\varphi}$
the integral of the volume form corresponding to $\bm{\varphi} \in \mathscr{E}(- n -1)$
for a closed CR manifold $M$.

\subsection{Deformation of CR structures}
\label{subsection:deformation-of-CR-structures}

This subsection deals with deformations of real hypersurfaces in a complex manifold
and corresponding deformations of CR structures;
we follow the argument in~\cite[Section 4]{Hirachi-Marugame-Matsumoto17}.

Let $M$ be a closed strictly pseudoconvex real hypersurface
in a complex manifold $X$ of dimension $n + 1$,
and $(M_{t})_{t \in (-1, 1)}$ be a smooth family of closed real hypersurfaces in $X$
such that $M_{0} = M$.
Take a Fefferman defining function $\bm{\rho}_{t}$ of $\mathcal{M}_{t} = \pi_{\mathcal{X}}^{-1} (M_{t})$
that is smooth in $t$.
Then $(d/dt)|_{t=0} \bm{\rho}_{t}|_{\mathcal{M}} \in \mathscr{E}(1)$
is independent of the choice of $\bm{\rho}_{t}$.
Conversely,
for any real-valued function $\bm{\varphi} \in \mathscr{E}(1)$,
there exists a smooth family $(M_{t})_{t \in (-1, 1)}$
such that $\bm{\varphi} = (d/dt)|_{t=0} \bm{\rho}_{t}|_{\mathcal{M}}$.
Thus the space of infinitesimal deformations of real hypersurfaces
is naturally parametrized by $\mathscr{E}(1)_{\mathbb{R}}$,
the space of real-valued functions in $\mathscr{E}(1)$.
On the other hand,
the \emph{space of infinitesimal deformations of CR structures},
denoted by $\mathscr{D}(M, T^{1,0} M)$,
is a linear subspace of $\Gamma(\Hom(T^{0,1} M, T^{1,0} M))$.
Each infinitesimal deformation of real hypersurfaces
induces that of CR structures.
This correspondence is represented by a differential operator
\begin{equation}
	D \colon \mathscr{E}(1) \to \mathscr{D}(M, T^{1,0} M),
\end{equation}
first introduced by Buchweitz-Millson~\cite[Chapter 8]{Buchweitz-Millson97}; 
see also~\cite[Section 4.]{Akahori-Garfield-Lee02}.
This operator also appears in a subcomplex
of the BGG sequence~\cite{Cap-Slovak-Soucek01}.
If $\bm{F} \in \mathscr{E}(1)$ is pure imaginary,
$D \bm{F}$ corresponds to
an infinitesimal deformation induced from an infinitesimal contact diffeomorphism on $M$,
which is studied by Cheng-Lee~\cite[Lemma 3.4]{Cheng-Lee90}.
From this observation,
we define a ``trivial deformation of real hypersurfaces'' from CR point of view.

\begin{definition} \label{def:infinitesimally-trivial}
	A smooth family $(M_{t})_{t \in (-1, 1)}$ of closed real hypersurfaces
	is said to be \emph{infinitesimally trivial as a deformation of CR structures}
	if
	\begin{equation}
		\left. \frac{d}{dt} \right|_{t=0} \bm{\rho}_{t}|_{\mathcal{M}} \in \Re \ker D.
	\end{equation}
\end{definition}

As stated in \cref{subsection:ambient-space},
a contact form $\theta$ on $M$ gives an identification between $\mathscr{E}(1)$ and $C^{\infty}(M)$.
Thus we obtain a differential operator
\begin{equation}
	D_{\theta} \colon C^{\infty}(M) \to \mathscr{D}(M, T^{1,0} M),
\end{equation}
written in terms of the Tanaka-Webster connection as follows:
\begin{equation}
	2 \tensor{(D_{\theta} F)}{_{\overline{\alpha}} ^{\beta}} 
	= \tensor{F}{_{\overline{\alpha}} ^{\beta}} - \sqrt{-1} \tensor{A}{_{\overline{\alpha}} ^{\beta}} F.
\end{equation}

\subsection{Ambient construction}
\label{subsection:ambient-construction}

In this subsection, we recall CR invariant powers of the sub-Laplacian,
the $P$-prime operator, and $Q$-prime curvature,
which are main subjects in this paper.

Let $\bm{\Delta}$ be the $\overline{\partial}$-Laplacian
with respect to an ambient metric $\bm{g}[\bm{\rho}]$.
This operator maps $\widetilde{\mathscr{E}}(w, w')$
to $\widetilde{\mathscr{E}}(w - 1, w' - 1)$.

\begin{lemma}[{\cite[Theorem 1.1]{Gover-Graham05}}]
	Let $(w, w') \in \mathbb{R}^{2}$ such that $k = w + w' + n + 1$ is a positive integer.
	Then for $\widetilde{\bm{f}} \in \widetilde{\mathscr{E}}(w, w')$,
	\begin{equation}
		(\bm{\Delta}^{k} \widetilde{\bm{f}}) |_{\mathcal{M}} \in \mathscr{E}(w - k, w' - k)
	\end{equation}
	depends only on $\bm{f} = \widetilde{\bm{f}} |_{\mathcal{M}}$
	and defines a differential operator
	\begin{equation}
		\bm{P}_{w, w'} \colon \mathscr{E}(w, w') \to \mathscr{E}(w - k, w' - k).
	\end{equation}
	Moreover, the operator $\bm{P}_{w, w'}$ is independent of
	the choice of a Fefferman defining function if $k \le n + 1$.
\end{lemma}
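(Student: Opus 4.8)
The plan is to \emph{construct} $\bm{P}_{w,w'}$ by the formula $\bm{P}_{w,w'}\bm{f} = (\bm{\Delta}^{k}\widetilde{\bm{f}})|_{\mathcal{M}}$, where $\widetilde{\bm{f}} \in \widetilde{\mathscr{E}}(w,w')$ is \emph{any} homogeneous extension of $\bm{f}$, and then to show that this prescription is legitimate: the restriction is independent of the chosen extension, and, when $k \le n+1$, of the chosen Fefferman defining function. Everything reduces to one algebraic identity, so I would start there. Write the ambient metric as $\bm{g} = dd^{c}\bm{\rho}$ with global Kähler potential $\bm{\rho}$. Since $\bm{\rho}$ is homogeneous of degree $(1,1)$, Euler's relation shows that its holomorphic gradient $\bm{g}^{A\bar{B}}\bm{\rho}_{\bar{B}}$ is exactly the dilation field $Z_{0}$; consequently $\bm{g}^{A\bar{B}}\bm{\rho}_{A}\bm{\rho}_{\bar{B}} = \bm{\rho}$ and $\bm{\Delta}\bm{\rho} = -(n+2)$, and these identities are \emph{exact}, not merely asymptotic, because they use only that $\bm{g}$ arises from a homogeneous potential. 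Substituting them into the Leibniz expansion of $\bm{\Delta}(\bm{\rho}^{j}\widetilde{\bm{g}})$ yields, for $\widetilde{\bm{g}} \in \widetilde{\mathscr{E}}(p,q)$, the \emph{indicial identity}
\[
	\bm{\Delta}(\bm{\rho}^{j}\widetilde{\bm{g}}) = \bm{\rho}^{j}\,\bm{\Delta}\widetilde{\bm{g}} - j(\ell-j)\,\bm{\rho}^{j-1}\widetilde{\bm{g}}, \qquad \ell = p+q+2j+n+1,
\]
where $\ell$ is precisely the integer $w+w'+n+1$ attached to the bidegree of $\bm{\rho}^{j}\widetilde{\bm{g}}$.

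Two homogeneous extensions of $\bm{f}$ differ by $\bm{\rho}\,\widetilde{\bm{u}}$ with $\widetilde{\bm{u}} \in \widetilde{\mathscr{E}}(w-1,w'-1)$, so well-definedness amounts to showing that $\bm{\Delta}^{k}(\bm{\rho}\,\widetilde{\bm{u}})$ is divisible by $\bm{\rho}$. Iterating the indicial identity—at the step passing from $\bm{\Delta}^{j}$ to $\bm{\Delta}^{j+1}$ it is applied, with power one, to a bidegree of level $k-2j$—an induction gives
\[
	\bm{\Delta}^{j}(\bm{\rho}\,\widetilde{\bm{u}}) = c_{j}\,\bm{\Delta}^{j-1}\widetilde{\bm{u}} + \bm{\rho}\,\bm{\Delta}^{j}\widetilde{\bm{u}}, \qquad c_{0}=0, \quad c_{j+1} = c_{j} - (k-2j-1),
\]
and the recursion solves to $c_{j} = -j(k-j)$. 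The decisive point is the collapse $c_{k}=0$: it gives $\bm{\Delta}^{k}(\bm{\rho}\,\widetilde{\bm{u}}) = \bm{\rho}\,\bm{\Delta}^{k}\widetilde{\bm{u}}$, which vanishes on $\mathcal{M}$, so $(\bm{\Delta}^{k}\widetilde{\bm{f}})|_{\mathcal{M}}$ depends only on $\bm{f}$. To see that it is a differential operator of order at most $2k$ from $\mathscr{E}(w,w')$ to $\mathscr{E}(w-k,w'-k)$, I would compute with the extension that is constant in the $\bm{\rho}$-direction for a fixed contact form: then $\bm{\Delta}^{k}\widetilde{\bm{f}}|_{\mathcal{M}}$ is manifestly a universal polynomial in the ambient metric jet along $\mathcal{M}$ and in finitely many tangential derivatives of $\bm{f}$, and by the previous step the result is unchanged if another extension is used.

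It remains to prove independence of the Fefferman defining function when $k \le n+1$. By \cref{prop:Fefferman-defining-function}, two Fefferman defining functions agree modulo $O(\bm{\rho}^{n+3})$, so the corresponding metrics, hence their $\overline{\partial}$-Laplacians, satisfy $\bm{\Delta}' - \bm{\Delta} = E$ with $E$ a second order operator whose coefficients vanish to order $n+1$ along $\mathcal{M}$. Writing $\bm{\Delta}'^{\,k} - \bm{\Delta}^{k} = \sum_{i=0}^{k-1} \bm{\Delta}'^{\,k-1-i}\,E\,\bm{\Delta}^{i}$ and applying this to an extension of $\bm{f}$, each summand acquires from $E$ a factor vanishing to order $n+1$; each of the at most $k-1 \le n$ subsequent applications of $\bm{\Delta}'$ lowers the order of vanishing by at most one—again a direct consequence of the indicial identity, now for $\bm{\rho}'$—so every summand vanishes to order at least $n+2-k \ge 1$ on $\mathcal{M}$ and restricts to zero there. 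Hence $\bm{P}_{w,w'}$ does not depend on the choice.

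The step I expect to cost the most care is the order-of-vanishing bookkeeping in this last paragraph: one must track precisely how iterated applications of $\bm{\Delta}$ (and of the error operator $E$) interact with powers of $\bm{\rho}$, and this in turn relies on the indicial identity being an exact equality, since only then can it be iterated without accumulating uncontrolled remainders. The combinatorial core—the vanishing $c_{k}=0$—is by contrast a two-line computation once that identity is in hand.
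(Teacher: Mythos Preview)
The paper does not give its own proof of this lemma: it is quoted verbatim as \cite[Theorem 1.1]{Gover-Graham05} and used without further justification, so there is nothing in the present paper to compare your argument against. That said, your sketch is essentially the original Gover--Graham argument: the exact indicial identity $\bm{\Delta}(\bm{\rho}^{j}\widetilde{\bm{g}}) = \bm{\rho}^{j}\bm{\Delta}\widetilde{\bm{g}} - j(\ell-j)\bm{\rho}^{j-1}\widetilde{\bm{g}}$, the telescoping that forces $c_{k}=0$, and the order-of-vanishing count for independence of the defining function are precisely the ingredients used there. Your bookkeeping is correct; in particular the recursion $c_{j+1}=c_{j}-(k-2j-1)$ really does solve to $c_{j}=-j(k-j)$, and the estimate $n+1-(k-1)\ge 1$ for $k\le n+1$ is the right threshold.

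One point worth tightening: in the last paragraph you assert that the difference $E=\bm{\Delta}'-\bm{\Delta}$ has coefficients vanishing to order $n+1$. This is true, but it requires a word of justification, since $\bm{\rho}'-\bm{\rho}=O(\bm{\rho}^{n+3})$ only gives $\bm{g}'-\bm{g}=dd^{c}O(\bm{\rho}^{n+3})=O(\bm{\rho}^{n+1})$ after two derivatives land on the defining function; the same order then propagates to the inverse metric and to the Laplacian. Once this is said, your counting---each subsequent $\bm{\Delta}'$ drops the order by at most one via the indicial identity---is sound and yields the claimed independence for $k\le n+1$.
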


When $w = w' =1$, the operator $\bm{P}_{1,1}$ depends on the choice of a Fefferman defining function.
However, a slight modification gives a new CR invariant differential operator,
which is closely related to the variation of the total $Q$-prime curvature.

\begin{lemma}[{\cite[Lemma A.2]{Hirachi-Marugame-Matsumoto17}}]
\label{lem:super-critical-GJMS-operator}
	Let $\bm{\nabla}$ be the Levi-Civita connection
	with respect to the Lorentz-K\"{a}hler metric $\bm{g}[\bm{\rho}]$.
	Then for $\widetilde{\bm{f}} \in \widetilde{\mathscr{E}}(1)$, 
	\begin{equation}
		[\Re (\bm{\Delta}^{n + 1}
		\tensor{\bm{\nabla}}{^{A}} \tensor{\bm{\nabla}}{^{B}} \tensor{\bm{\nabla}}{_{A}} \tensor{\bm{\nabla}}{_{B}})
		\widetilde{\bm{f}}] |_{\mathcal{M}}
		\in \mathscr{E}(- n - 2)
	\end{equation}
	depends only on $\bm{f} = \widetilde{\bm{f}}|_{\mathcal{M}}$
	and defines a differential operator
	\begin{equation}
		\bm{R} \colon \mathscr{E}(1) \to \mathscr{E}(- n - 2).
	\end{equation}
	Moreover, $\bm{R}$ is independent of the choice of a Fefferman defining function.
	The operator $\bm{R}$ coincides with $\bm{P}_{1,1}$  
	if the obstruction function $\bm{\mathcal{O}}$ vanishes along $\mathcal{M}$.
\end{lemma}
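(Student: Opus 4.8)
The plan is to compare $\bm{\Delta}^{n+1}\bm{\nabla}^{A}\bm{\nabla}^{B}\bm{\nabla}_{A}\bm{\nabla}_{B}$ with the critical power $\bm{\Delta}^{n+3}$, whose restriction to $\mathcal{M}$ is already governed by the Gover--Graham lemma, and to show that the difference contributes nothing unless the obstruction function is present.

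\emph{Step 1: a curvature identity.} First I would commute the holomorphic covariant derivatives of the Lorentz--K\"{a}hler metric $\bm{g}[\bm{\rho}]$ past one another. Since $\bm{g}[\bm{\rho}]$ is K\"{a}hler, the Ricci identities and the contracted Bianchi identity give, for any function $\widetilde{\bm{f}}$,
\begin{equation}
	\bm{\nabla}^{A}\bm{\nabla}^{B}\bm{\nabla}_{A}\bm{\nabla}_{B}\widetilde{\bm{f}}
	= \bm{\Delta}^{2}\widetilde{\bm{f}}
	+ c_{1}\,\Ric(\bm{g}[\bm{\rho}])^{A\overline{B}}\widetilde{\bm{f}}_{A\overline{B}}
	+ c_{2}\,(\bm{\nabla}\Scal(\bm{g}[\bm{\rho}]),\bm{\nabla}\widetilde{\bm{f}})
	=: \bm{\Delta}^{2}\widetilde{\bm{f}} + C\widetilde{\bm{f}}
\end{equation}
with universal constants $c_{1},c_{2}$, so that $\bm{\Delta}^{n+1}\bm{\nabla}^{A}\bm{\nabla}^{B}\bm{\nabla}_{A}\bm{\nabla}_{B} = \bm{\Delta}^{n+3} + \bm{\Delta}^{n+1}C$. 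The Monge--Amp\`ere equation \eqref{eq:Fefferman-defining-function} then pins down the order of vanishing of $C$: writing $\det\bm{g}[\bm{\rho}]$ in a branched fiber coordinate, the fiber factor is pluriharmonic, so
\begin{equation}
	\Ric(\bm{g}[\bm{\rho}]) = -\partial\overline{\partial}\log(1+\bm{\mathcal{O}}\bm{\rho}^{n+2}) = O(\bm{\rho}^{n}),
\end{equation}
with $\bm{\rho}^{n}$-order term a universal multiple of $\bm{\mathcal{O}}\,\partial\bm{\rho}\otimes\overline{\partial\bm{\rho}}$; using moreover the identities $\bm{g}[\bm{\rho}]^{A\overline{B}}\bm{\rho}_{A}\bm{\rho}_{\overline{B}}=\bm{\rho}$ and $\bm{\Delta}\bm{\rho}=\mathrm{const}$, which hold because $\bm{g}[\bm{\rho}]=dd^{c}\bm{\rho}$, together with the homogeneity of $\widetilde{\bm{f}}$, one checks that $\Scal(\bm{g}[\bm{\rho}])=O(\bm{\rho}^{n+1})$ and that the coefficients of $C$ vanish along $\mathcal{M}$ to high order, with leading term controlled by $\bm{\mathcal{O}}|_{\mathcal{M}}$.

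\emph{Step 2: the operator $\bm{R}$.} Because $w+w'+n+1=n+3$ when $w=w'=1$, the Gover--Graham lemma gives that $(\bm{\Delta}^{n+3}\widetilde{\bm{f}})|_{\mathcal{M}}$ depends only on $\bm{f}=\widetilde{\bm{f}}|_{\mathcal{M}}$. For the term $\bm{\Delta}^{n+1}C\widetilde{\bm{f}}$ I would run the indicial mechanism underlying that lemma: $\bm{\Delta}(\bm{\rho}^{j}\bm{u})$ equals $\bm{\rho}^{j}\bm{\Delta}\bm{u}$ plus $\bm{\rho}^{j-1}\bm{u}$ times an indicial factor which, for the homogeneities occurring here, vanishes only at the ``critical'' exponent $j=n+3$; since the relevant exponents stay below this value, the high order of vanishing of $C\widetilde{\bm{f}}$ forces $(\bm{\Delta}^{n+1}C\widetilde{\bm{f}})|_{\mathcal{M}}$ --- and hence the whole restriction $(\bm{\Delta}^{n+1}\bm{\nabla}^{A}\bm{\nabla}^{B}\bm{\nabla}_{A}\bm{\nabla}_{B}\widetilde{\bm{f}})|_{\mathcal{M}}$ --- to be a universal expression in $\bm{f}|_{\mathcal{M}}$, its tangential derivatives, and $\bm{\mathcal{O}}|_{\mathcal{M}}$; in particular it is independent of the extension $\widetilde{\bm{f}}$. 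Taking real parts defines $\bm{R}\colon\mathscr{E}(1)\to\mathscr{E}(-n-2)$, and when $\bm{\mathcal{O}}$ vanishes along $\mathcal{M}$ the order of vanishing of $C\widetilde{\bm{f}}$ increases by one, which by the same indicial count kills the contribution of $C$ entirely, leaving $\bm{R}\bm{f}=\Re(\bm{\Delta}^{n+3}\widetilde{\bm{f}})|_{\mathcal{M}}=\bm{P}_{1,1}\bm{f}$.

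\emph{Step 3: independence of the Fefferman defining function (the crux).} Let $\bm{\rho},\widehat{\bm{\rho}}$ be two Fefferman defining functions. By \cref{prop:Fefferman-defining-function}, $\widehat{\bm{\rho}}=\bm{\rho}+O(\bm{\rho}^{n+3})$, so $\bm{g}[\widehat{\bm{\rho}}]-\bm{g}[\bm{\rho}] = O(\bm{\rho}^{n+1})$, and consequently the Levi--Civita connection, the Laplacian $\bm{\Delta}$, and the Ricci and scalar curvatures all change by quantities vanishing to order $\ge n+1$ along $\mathcal{M}$. Expanding $\bm{\Delta}^{n+1}\bm{\nabla}^{A}\bm{\nabla}^{B}\bm{\nabla}_{A}\bm{\nabla}_{B}\widetilde{\bm{f}}$ and collecting the difference, every term carries at least one such $O(\bm{\rho}^{n+1})$ factor, and the indicial count of Step~2 then forces each term to restrict to $0$ on $\mathcal{M}$; thus $\bm{R}$ is Fefferman-invariant. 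This is precisely where the dressing by $\bm{\nabla}^{A}\bm{\nabla}^{B}\bm{\nabla}_{A}\bm{\nabla}_{B}$ is needed: it lowers homogeneity by only $2$, rather than by $4$, and couples to the metric ambiguity only through curvature, whereas $\bm{\Delta}^{n+3}$ itself --- whose two ``extra'' derivatives are not organized this way --- genuinely changes, which is why $\bm{P}_{1,1}$ is not Fefferman-invariant. I expect this step to be the main obstacle: a factor of order $\bm{\rho}^{n+1}$, once differentiated by the four covariant derivatives in $\bm{\nabla}^{A}\bm{\nabla}^{B}\bm{\nabla}_{A}\bm{\nabla}_{B}$ and the $n+1$ Laplacians, can a priori lose many orders of vanishing, so one must use the explicit form of the change of $\bm{g}[\bm{\rho}]$ --- it is $dd^{c}$ of a function vanishing to order $n+3$ --- together with the identities $\bm{g}[\bm{\rho}]^{A\overline{B}}\bm{\rho}_{A}\bm{\rho}_{\overline{B}}=\bm{\rho}$ and $\bm{\Delta}\bm{\rho}=\mathrm{const}$ to verify that all potential survivors cancel.
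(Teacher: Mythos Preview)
The paper does not prove this lemma: it is stated as a citation of \cite[Lemma A.2]{Hirachi-Marugame-Matsumoto17} and used as a black box, so there is no proof in the present paper to compare your attempt against.

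As for the attempt itself, the overall strategy is the natural one --- rewrite $\bm{\nabla}^{A}\bm{\nabla}^{B}\bm{\nabla}_{A}\bm{\nabla}_{B}$ as $\bm{\Delta}^{2}$ plus curvature corrections, exploit that $\Ric(\bm{g}[\bm{\rho}])=O(\bm{\rho}^{n})$ by the Monge--Amp\`ere equation, and feed this into the indicial calculus that underlies the Gover--Graham lemma --- and this is indeed how the argument in \cite{Hirachi-Marugame-Matsumoto17} proceeds. Two cautions, however. First, in this paper the indices $A,B$ are holomorphic (so $\bm{\nabla}^{A}=\bm{g}^{A\overline{C}}\bm{\nabla}_{\overline{C}}$), and your displayed identity in Step~1 should be derived with that convention; the commutator with $\bm{\Delta}^{2}$ will then involve the \emph{complex} Ricci tensor directly, which is exactly what vanishes to order $\bm{\rho}^{n}$. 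Second, your Step~3 is, as you suspect, not a routine count: applying $\bm{\Delta}^{n+1}$ to a term of order $\bm{\rho}^{n+1}$ can in principle produce a nonzero boundary value, and the statement that ``every term carries at least one such $O(\bm{\rho}^{n+1})$ factor'' is not by itself enough. The genuine mechanism, worked out in the cited reference, is that the ambiguity in $\bm{\Delta}^{n+3}\widetilde{\bm{f}}|_{\mathcal{M}}$ under change of Fefferman defining function is a specific multiple of $\bm{\mathcal{O}}\cdot\bm{f}$ (and its conjugate), and the curvature correction $C$ produces \emph{exactly the same} contribution with the opposite sign; this cancellation is what the ``dressing'' by $\bm{\nabla}^{A}\bm{\nabla}^{B}\bm{\nabla}_{A}\bm{\nabla}_{B}$ achieves, and it must be verified explicitly rather than inferred from order-of-vanishing bookkeeping alone.
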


To define the $Q$-prime curvature,
we need to fix a pseudo-Einstein contact form on $M$.

\begin{definition}[{\cite[Definition 5.4]{Hirachi14}}] \label{def:Q-prime-curvature}
	Let $\theta$ be a pseudo-Einstein contact form on $M$
	and $\rho$ a defining function of $M$ normalized by $\theta$
	such that $\bm{h}_{\rho}$ is flat on the pseudoconvex side.
	The \emph{$Q$-prime curvature} $\bm{Q}'_{\theta}$ is defined by
	\begin{equation}
		\bm{Q}'_{\theta}
		= (n + 2)^{-2} [\bm{\Delta}^{n + 1} (\log \bm{h}_{\rho})^{2}] |_{\mathcal{M}} \in \mathscr{E}(- n -1).
	\end{equation}
\end{definition}

If we take another pseudo-Einstein contact form
$\widehat{\theta} = e^{\Upsilon} \theta$ for $\Upsilon \in \mathscr{P}$,
the $Q$-prime curvature transforms as follows~\cite[Proposition 5.5]{Hirachi14}:
\begin{equation} \label{eq:transformation-law-of-Q-prime-curvature}
	\bm{Q}'_{\widehat{\theta}}
	= \bm{Q}'_{\theta} + 2 \bm{P}'_{\theta} \Upsilon + \bm{P}_{0,0}(\Upsilon^{2}).
\end{equation}
Here $\bm{P}'_{\theta} \colon \mathscr{P} \to \mathscr{E}(- n -1)$ is the \emph{$P$-prime operator} defined by
\begin{equation} \label{eq:P-prime-operator}
	\bm{P}'_{\theta} \Upsilon
	= - (n + 2)^{-1} [\bm{\Delta}^{n + 1}(\widetilde{\Upsilon} \log \bm{h}_{\rho})]|_{\mathcal{M}},
\end{equation}
where $\widetilde{\Upsilon}$ is a smooth extension of $\Upsilon$
that is pluriharmonic on the pseudoconvex side~\cite[Definition 4.2]{Hirachi14}.
Since $\bm{P}_{0,0}$ is formally self-adjoint
and annihilates constant functions~\cite[Proposition 5.1]{Gover-Graham05},
we have
\begin{equation}
	\int_{M} \bm{Q}'_{\widehat{\theta}} = \int_{M} \bm{Q}'_{\theta} + 2 \int_{M} \bm{P}'_{\theta} \Upsilon,
\end{equation}
for a closed CR manifold $M$.
Marugame~\cite[Theorem 1.2]{Marugame18} 
proved that $\bm{P}'_{\theta}$ is also formally self-adjoint.
Thus from $\bm{P}'_{\theta} 1 = 0$,
the integral of the $Q$-prime curvature,
the \emph{total $Q$-prime curvature},
defines a global CR invariant,
denoted by $\overline{Q}'(M)$.

\subsection{Sasakian manifolds}
\label{subsection:Sasakian-manifolds}

This subsection contains a brief summary of Sasakian manifolds from CR point of view.
See~\cite{Boyer-Galicki08} and~\cite{Sparks11}
for a comprehensive introduction to Sasakian manifolds.
Let $(S, T^{1,0}S)$ be a strictly pseudoconvex CR manifold
of dimension $2 n + 1$,
$\eta$ a contact form on $S$,
and $\xi$ the Reeb vector field with respect to $\eta$.

\begin{definition} \label{def:Sasakian-manifold}
	The triple $(S, T^{1,0}S, \eta)$ is called a \emph{Sasakian manifold}
	if the Tanaka-Webster torsion with respect to $\eta$ vanishes.
\end{definition}

For a Sasakian manifold $(S, T^{1,0} S, \eta)$,
an almost complex structure $I$ on the cone $C(S) = \mathbb{R}_{+} \times S$ of $S$
is defined by
\begin{equation}
	I( V + a \xi + b (r \partial_{r})) = J V + b \xi - a (r\partial_{r}),
\end{equation}
where $r$ is the coordinate of $\mathbb{R}_{+}$, $V \in H S$ and $a,b \in \mathbb{R}$.
Then this almost complex structure is integrable;
that is, $(C(S), I)$ is a complex manifold.
The Riemannian metric $\overline{g} = dr \otimes dr + r^{2} g_{\eta}$
is a K\"{a}hler metric on $C(S)$
and its K\"{a}hler form is equal to $dd^{c} r^{2}/2$.
Moreover, the level set $\{ r = 1 \}$ is isomorphic to $S$ as a CR manifold
and the $1$-form $\eta$ is equal to $d^{c} \log r^{2}$.

Consider the Tanaka-Webster connection with respect to $\eta$.
Note that the index $0$ is used for the component $\xi$ or $\eta$ in our index notation. 
The commutators of the derivatives for $f \in C^{\infty}(X)$ are given by
\begin{equation}
	\tensor{f}{_{[} _{\alpha} _{\beta} _{]}}
	= 0,
	\quad
	2 \tensor{f}{_{[} _{\alpha} _{\overline{\beta}} _{]}}
	= \sqrt{-1} \tensor{l}{_{\alpha} _{\overline{\beta}}} f_{0},
	\quad
	\tensor{f}{_{[} _{0} _{\alpha} _{]}}
	= 0,
\end{equation}
where $[\dotsb]$ means the antisymmetrization over the enclosed indices.
Define the \emph{Kohn Laplacian} $\Box_{b}$ and \emph{sub-Laplacian} $\Delta_{b}$ by
\begin{equation}
	\Box_{b} f = - \tensor{f}{_{\overline{\alpha}} ^{\overline{\alpha}}}, \quad
	\Delta_{b} f
	= - \tensor{f}{_{\overline{\alpha}} ^{\overline{\alpha}}}
	-  \tensor{f}{_{\alpha}^ {\alpha}}
	= \Box_{b} f + \overline{\Box}_{b} f,
\end{equation}
respectively.
From the above commutation relations,
we have
\begin{equation}
	\Box_{b} - \overline{\Box}_{b} = \sqrt{-1} n \xi.
\end{equation}
The third covariant derivatives of $f$ satisfy
\begin{align}
	\tensor{f}{_{\alpha} _{[} _{0} _{\overline{\beta}} _{]}} &= 0,
	\quad
	\tensor{f}{_{\alpha} _{[} _{0} _{\overline{\beta}} _{]}} = 0, \notag \\
	2 \tensor{f}{_{\alpha} _{[} _{\beta} _{\overline{\gamma}} _{]}}
	&= \sqrt{-1} \tensor{l}{_{\beta} _{\overline{\gamma}}} \tensor{f}{_{\alpha} _{0}}
	+ \tensor{R}{_{\alpha}^{\delta}_{\beta}_{\overline{\gamma}}} \tensor{f}{_{\delta}}.
	\label{eq:commutator-of-third-derivative}
\end{align}
From these,
it follows that
the Kohn Laplacian and sub-Laplacian commute with the Reeb vector field $\xi$.
For the proofs of these commutation relations, 
see~\cite[Lemma 2.3]{Lee88},
where these formulas are proved for general CR manifolds.

We next consider an Einstein condition for Sasakian manifolds.

\begin{definition} \label{def:Sasakian-eta-Einstein-manifold}
	Let $(S, T^{1,0}S, \eta)$ be a $(2 n + 1)$-dimensional Sasakian manifold.
	It is called a \emph{Sasakian $\eta$-Einstein manifold}
	if there exists a real constant $\lambda$ such that the Tanaka-Webster Ricci curvature
	$\tensor{\Ric}{_{\alpha}_{\overline{\beta}}}$ of $\eta$ satisfies
	\begin{equation}
		\tensor{\Ric}{_{\alpha}_{\overline{\beta}}}
		= (n + 1) \lambda \tensor{l}{_{\alpha}_{\overline{\beta}}}.
	\end{equation}
	In particular if $\lambda = 1$, it is called a \emph{Sasaki-Einstein manifold}.
	In this paper, we call the constant $(n + 1) \lambda$ the \emph{Einstein constant} of $(S, T^{1,0}S, \eta)$.
\end{definition}

There exist characterizations of Sasakian $\eta$-Einstein manifolds
in terms of $g_{\eta}$ or $\overline{g}$.

\begin{proposition}
	Let $(S, T^{1,0}S, \eta)$ be a $(2 n + 1)$-dimensional Sasakian manifold
	and $\lambda$ a real constant.
	Then the following are equivalent:
	\begin{enumerate}
		\item  \label[condition]{cond:Sasakian-eta-Einstein-manifold-1}
			$(S, T^{1,0} S, \eta)$ is a Sasakian $\eta$-Einstein manifold
			with Einstein constant $(n + 1) \lambda$;
		\item  \label[condition]{cond:Sasakian-eta-Einstein-manifold-2}
			the Ricci curvature $\Ric_{g_{\eta}}$ of $g_{\eta}$ satisfies
			\begin{equation}
				\Ric_{g_{\eta}}
				= 2((n + 1)\lambda-1) g_{\eta} + 2(n + 1)(1-\lambda) \eta \otimes \eta;
			\end{equation}
		\item  \label[condition]{cond:Sasakian-eta-Einstein-manifold-3}
			the Ricci form of $\overline{g}$ is equal to $(n + 1)(\lambda - 1) d\eta$,
			or, the Ricci curvature $\Ric_{\overline{g}}$ of $\overline{g}$ satisfies
			\begin{equation}
				\Ric_{\overline{g}}
				= 2 (n + 1) (\lambda -1) (g_{\eta} - \eta \otimes \eta).
			\end{equation}
	\end{enumerate}
\end{proposition}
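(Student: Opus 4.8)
\emph{Proof proposal.} The plan is to establish $(1) \Leftrightarrow (2)$ by comparing the Tanaka-Webster Ricci curvature with the Riemannian Ricci curvature of the Webster metric $g_{\eta}$, and then $(2) \Leftrightarrow (3)$ by the classical warped-product formula for the Ricci curvature of the cone $(C(S), \overline{g})$ together with the Kähler structure on $C(S)$. Both equivalences are standard in Sasakian geometry (cf.\ \cite{Boyer-Galicki08}), but it is worth recording the computation in the normalizations fixed in \cref{subsection:CR-manifold,subsection:Sasakian-manifolds}.

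For $(1) \Leftrightarrow (2)$: since the Tanaka-Webster connection $\nabla$ differs from the Levi-Civita connection of $g_{\eta}$ by an explicit tensor built from $d\eta$ and the Webster torsion $\tensor{A}{_{\alpha}_{\beta}}$, and $\tensor{A}{_{\alpha}_{\beta}} \equiv 0$ on a Sasakian manifold, the correction is a universal expression in $J$ alone. Hence, in a local unitary frame $(\xi, Z_{\alpha}, Z_{\overline{\beta}})$ one gets, for a universal constant $c_{1}$,
\begin{equation*}
	\Ric_{g_{\eta}}(Z_{\alpha}, Z_{\overline{\beta}}) = \tensor{\Ric}{_{\alpha}_{\overline{\beta}}} + c_{1}\tensor{l}{_{\alpha}_{\overline{\beta}}}, \qquad \Ric_{g_{\eta}}(\xi, \xi) = 2n, \qquad \Ric_{g_{\eta}}(Z_{\alpha}, Z_{\beta}) = \Ric_{g_{\eta}}(\xi, Z_{\alpha}) = 0,
\end{equation*}
where the vanishing of the last components again uses $\tensor{A}{_{\alpha}_{\beta}} \equiv 0$ and $\Ric_{g_{\eta}}(\xi, \xi) = 2n$ is the standard fact that $\xi$ is a unit Killing field all of whose sectional curvatures are $1$. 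Evaluating on the standard CR sphere $S^{2n+1}$, which is Sasaki-Einstein ($\lambda = 1$) with $g_{\eta}$ the round metric and $\Ric_{g_{\eta}} = 2n\, g_{\eta}$, forces $c_{1} = -1$. Since $g_{\eta}(Z_{\alpha}, Z_{\overline{\beta}}) = \tfrac{1}{2}\tensor{l}{_{\alpha}_{\overline{\beta}}}$ and $\eta(Z_{\alpha}) = 0$, a componentwise comparison then shows that the tensor identity in $(2)$ is equivalent to $\tensor{\Ric}{_{\alpha}_{\overline{\beta}}} = (n+1)\lambda\, \tensor{l}{_{\alpha}_{\overline{\beta}}}$; in particular the $(\xi,\xi)$-component of $(2)$ is automatically satisfied for every $\lambda$.

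For $(2) \Leftrightarrow (3)$: the warped-product computation for $\overline{g} = dr \otimes dr + r^{2} g_{\eta}$ over the $(2n+1)$-dimensional manifold $S$ gives $\Ric_{\overline{g}}(\partial_{r}, \cdot) = 0$ and $\Ric_{\overline{g}}|_{TS} = \Ric_{g_{\eta}} - 2n\, g_{\eta}$. Substituting $(2)$ and simplifying yields $\Ric_{\overline{g}}|_{TS} = 2(n+1)(\lambda - 1)(g_{\eta} - \eta \otimes \eta)$, and since the right-hand side, pulled back to $C(S)$, also annihilates $\partial_{r}$, this upgrades to the tensor identity $\Ric_{\overline{g}} = 2(n+1)(\lambda - 1)(g_{\eta} - \eta \otimes \eta)$ on $C(S)$; the converse implication is immediate. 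Finally, as $\overline{g}$ is Kähler, its Ricci form is $\rho_{\overline{g}} = \Ric_{\overline{g}}(I\cdot, \cdot)$, and using $I\partial_{r} = r^{-1}\xi$, $I\xi = -r\partial_{r}$, $\iota_{\xi}\, d\eta = 0$, and $g_{\eta}(JX, Y) = \tfrac{1}{2}\, d\eta(X, Y)$ for horizontal $X, Y$, the tensor identity becomes $\rho_{\overline{g}} = (n+1)(\lambda - 1)\, d\eta$.

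The only genuinely delicate point is bookkeeping of normalization constants — pinning down $c_{1} = -1$ in the Webster-to-Levi-Civita comparison, and the factors in the cone and Kähler-form conversions — so that all three formulas are consistent with the conventions in force; the underlying geometric facts are classical.
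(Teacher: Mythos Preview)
Your proposal is correct and follows the same two-step scheme as the paper: compare the Tanaka--Webster Ricci tensor with $\Ric_{g_\eta}$ for $(1)\Leftrightarrow(2)$, then use the warped-product Ricci formula $\Ric_{\overline g}=\Ric_{g_\eta}-2n\,g_\eta$ for $(2)\Leftrightarrow(3)$. The only difference is that the paper quotes the identity $\Ric_{g_\eta}=\Ric^{TW}-2g_\eta+2(n+1)\,\eta\otimes\eta$ directly from \cite[Theorem 7.3.12]{Boyer-Galicki08}, whereas you recover the same content by a universality argument and fix the constant $c_1=-1$ by evaluating on the standard sphere; you also spell out the Ricci-form equivalence inside (3), which the paper leaves implicit.
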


\begin{proof}
	
	First, 
	we show the equivalence
	between \cref{cond:Sasakian-eta-Einstein-manifold-1,cond:Sasakian-eta-Einstein-manifold-2}.
	From \cite[Theorem 7.3.12]{Boyer-Galicki08},
	the Ricci curvature $\Ric^{TW}$ of the Tanaka-Webster connection satisfies
	\begin{equation}
		\Ric_{g_{\eta}} = \Ric^{TW} - 2g_{\eta} + 2 (n + 1) \, \eta \otimes \eta.
	\end{equation}
	Note that $\Ric$ (resp. $\Ric_{T}$) in \cite[Theorem 7.3.12]{Boyer-Galicki08}
	corresponds to $\Ric_{g_{\eta}}$ (resp. $\Ric^{TW}$) in our notation. 
	On the other hand,
	$\Ric^{TW}$ is given by
	\begin{equation} \label{eq:Ricci-curvature-of-Sasakian-manifolds}
		\Ric^{TW} = \tensor{\Ric}{_{\alpha}_{\overline{\beta}}}
		(\theta^{\alpha} \otimes \theta^{\overline{\beta}} + \theta^{\overline{\beta}} \otimes \theta^{\alpha}),
	\end{equation}
	which follows from \cref{eq:curvature-form}.
	This proves the equivalence
	between \cref{cond:Sasakian-eta-Einstein-manifold-1,cond:Sasakian-eta-Einstein-manifold-2}.

	Next,
	we show the equivalence
	between \cref{cond:Sasakian-eta-Einstein-manifold-2,cond:Sasakian-eta-Einstein-manifold-3}.
	Since $(C(S), \overline{g})$ is the warped product of $(\mathbb{R}_{+}, d r \otimes d r)$
	and $(S, g_{\eta})$ by the function $r^{2}$ on $\mathbb{R}_{+}$,
	the Ricci curvature $\Ric_{\overline{g}}$ of $\overline{g}$ satisfies
	\begin{equation}
		\Ric_{\overline{g}} = \Ric_{g_{\eta}} - 2 n g_{\eta};
	\end{equation}
	see \cite[Proposition 9.106]{Besse87} for example.
	Therefore,
	\cref{cond:Sasakian-eta-Einstein-manifold-2}
	is equivalent to \cref{cond:Sasakian-eta-Einstein-manifold-3}.
\end{proof}

Note that
\cref{cond:Sasakian-eta-Einstein-manifold-3} is equivalent to
\begin{equation} \label{eq:Sasakian-eta-Einstein-manifold}
	- dd^{c} \log \det (\partial_{a \overline{b}} r^{2})
	= (n + 1)(\lambda - 1) dd^{c} \log r^{2}
\end{equation}
for any holomorphic coordinate $(z^{1}, \dots z^{n + 1})$ of $C(S)$
since $r^{2}/2$ is a K\"{a}hler potential of $\overline{g}$.

\begin{example} \label{ex:sphere}
	Let $S^{2 n + 1} \in \mathbb{C}^{n + 1}$ be the unit sphere centered at the origin
	with the canonical CR structure,
	and $\eta_{0}$ be the contact form on $S^{2 n + 1}$ defined by
	\begin{equation}
		\eta_{0} = \frac{\sqrt{-1}}{2} \sum_{i=1}^{n + 1} (z^{i} d \bar{z}^{i} - \bar{z}^{i} dz^{i})|_{S^{2 n + 1}}.
	\end{equation}
	Then the triple $(S^{2 n + 1}, T^{1,0} S^{2 n + 1}, \eta_{0})$ is a Sasakian manifold.
	Moreover,
	the cone $(C(S^{2 n + 1}), \overline{g})$ is isomorphic to
	$(\mathbb{C}^{n + 1} \setminus \{0\}, g_{\mathrm{Euc}})$ as a K\"{a}hler manifold 
	by the map $C(S^{2 n + 1}) \ni (r, p) \mapsto r^{2} p \in \mathbb{C}^{n + 1}$.
	Here, $g_{\mathrm{Euc}}$ is the Euclidean metric on $\mathbb{C}^{n + 1}$.
	Hence the metric $\overline{g}$ is Ricci-flat,
	and $(S^{2 n + 1}, T^{1,0} S^{2 n + 1}, \eta_{0})$ is a Sasaki-Einstein manifold.
	Note that there exists a canonical projection $S^{2 n + 1} \to \mathbb{CP}^{n}$,
	and the 2-form $d \eta_{0}$ descends to
	the Fubini-Study form $\omega_{FS}$ on $\mathbb{CP}^{n}$.
\end{example}

\begin{example}
	Let $Y$ be an $n$-dimensional complex manifold,
	$L$ a holomorphic line bundle over $Y$,
	and $h$ a Hermitian metric of $L$.
	Assume that the $(1,1)$-form $\omega_{h} = d d^{c} \log h$
	defines a K\"{a}hler-Einstein metric on $Y$ with Einstein constant $(n + 1) \lambda$.
	Consider the tube $S = \{v \in L \mid h(v, v) = 1\} \subset L$
	and the contact form $\eta = d^{c} \log h$ on $S$.
	Then the triple $(S, T^{1,0}S, \eta)$ is a Sasakian $\eta$-Einstein manifold with Einstein constant $(n + 1) \lambda$.
	Note that the Reeb vector field $\xi$ with respect to $\eta$ is the generator of the $S^{1}$-action on $S$
	induced from that on $L$.
\end{example}

\section{Construction of Fefferman defining function}
\label{section:construction-of-Fefferman-defining-function}

In this section,
we construct a Fefferman defining function for Sasakian $\eta$-Einstein manifolds.
To this end, we first construct a ``good'' defining function $\rho_{S}$ of $S$ in $C(S)$.
From this defining function,
we obtain a flat Hermitian metric $\bm{h}_{S}$ of $K_{C(S)}$,
and the desired Fefferman defining function $\bm{\rho}_{S}$ is given
as the product $\rho_{S} \cdot \bm{h}_{S}^{1/(n + 2)}$.

Let $(S, T^{1,0}S, \eta)$ be
a $(2 n + 1)$-dimensional Sasakian $\eta$-Einstein manifold with Einstein constant $(n + 1) \lambda$.
We identify $S$ with the level set $\{ r = 1 \}$, 
which is a real hypersurface in a complex manifold $X = C(S)$.
Define a smooth function $\psi_{\lambda}$ on $\mathbb{R}$ by
\begin{equation}
	\psi_{\lambda}(x) =
	\begin{cases}
		\lambda^{-1} \left(\exp(\lambda x)-1\right) & \text{if} \ \lambda \neq 0, \\
		x & \text{if} \ \lambda = 0.
	\end{cases}
\end{equation}
It can be seen that
\begin{equation}
	\psi_{\lambda}' = 1 +  \lambda \psi_{\lambda}, \quad \psi_{\lambda}'' = \lambda \psi_{\lambda}',
\end{equation}
and
\begin{equation}
	\rho_{S} = \psi_{\lambda}(\log r^{2}) \in C^{\infty}(X)
\end{equation}
is a defining function of $S$ normalized by $\eta$.

\begin{proposition} \label{prop:solution-of-Fefferman-equation}
	The defining function $\rho_{S}$ satisfies the equation 
	\begin{equation}
		dd^{c} \log \mathcal{J}_{z}[\rho_{S}] = 0,
	\end{equation}
	where $z = (z^{1}, \dots , z^{n + 1})$ is a local coordinate of $X$
	and
	\begin{equation}
		\mathcal{J}_{z}[\phi]
		= - \det
		\begin{pmatrix}
			\phi & \partial \phi / \partial z^{a} \\
			\partial \phi / \partial \overline{z}^{b} & \partial^{2} \phi / \partial z^{a} \partial \overline{z}^{b}
		\end{pmatrix}
		.
	\end{equation}
\end{proposition}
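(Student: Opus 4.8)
The plan is to compute $\mathcal{J}_{z}[\rho_{S}]$ essentially in closed form and then apply $dd^{c}\log$, reducing the statement to the $\eta$-Einstein identity~\eqref{eq:Sasakian-eta-Einstein-manifold}. Under a holomorphic change of coordinates $z\mapsto w$ the operator $\mathcal{J}_{z}[\,\cdot\,]$ gets multiplied by $|\det(\partial z/\partial w)|^{2}$, a squared modulus of a nonvanishing holomorphic function, so $dd^{c}\log\mathcal{J}_{z}[\rho_{S}]$ is independent of the chart; hence it suffices to work in one holomorphic coordinate system $z=(z^{1},\dots,z^{n+1})$ on $C(S)$.

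First I would record the effect of $\mathcal{J}_{z}$ on any defining function of the form $\phi=G(u)$ with $u=r^{2}$. Write $u_{a}=\partial_{a}u$, $u_{a\overline{b}}=\partial_{a}\partial_{\overline{b}}u$; the matrix $(u_{a\overline{b}})$ is positive definite since $dd^{c}r^{2}/2$ is the K\"ahler form of $\overline{g}$, so it has an inverse $u^{a\overline{b}}$. From $\partial_{a}\partial_{\overline{b}}\phi=G'(u)u_{a\overline{b}}+G''(u)u_{a}u_{\overline{b}}$, a Schur-complement expansion of the $(n+2)\times(n+2)$ bordered determinant defining $\mathcal{J}_{z}$ (a row operation removing the rank-one term $G''u_{a}u_{\overline{b}}$, followed by the Schur complement with respect to the $G'(u_{a\overline{b}})$ block) yields
\[
	\mathcal{J}_{z}[\phi]
	= -\,G'(u)^{n+1}\det(u_{a\overline{b}})\Big(G(u)-\big(G'(u)-G(u)G''(u)/G'(u)\big)\,q\Big),
	\qquad q:=u_{a}\,u^{a\overline{b}}\,u_{\overline{b}} .
\]

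Next I would evaluate $q$ using the cone structure. The complexified Euler field $Z=r\partial_{r}-\sqrt{-1}\,\xi$ is a holomorphic vector field on $C(S)$ (it is the $(1,0)$-part, up to a factor, of the real-holomorphic field $r\partial_{r}$), and $Zu=2u$ because $r\partial_{r}(r^{2})=2r^{2}$ and $\xi(r^{2})=0$. Differentiating $Z^{a}u_{a}=2u$ in $\overline{z}^{b}$ and using $\partial_{\overline{b}}Z^{a}=0$ gives $Z^{a}u_{a\overline{b}}=2u_{\overline{b}}$, hence $u^{a\overline{b}}u_{\overline{b}}=\tfrac12 Z^{a}$ and therefore $q=\tfrac12 Z^{a}u_{a}=u=r^{2}$. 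Now substitute $G(u)=\psi_{\lambda}(\log u)$: using $\psi_{\lambda}'=1+\lambda\psi_{\lambda}$ and $\psi_{\lambda}''=\lambda\psi_{\lambda}'$ one finds $G'(u)=\psi_{\lambda}'(\log u)/u$ and $G''(u)=(\lambda-1)\psi_{\lambda}'(\log u)/u^{2}$, and with $q=u$ the bracketed factor collapses to the constant $-1$. Thus
\[
	\mathcal{J}_{z}[\rho_{S}]=\psi_{\lambda}'(\log r^{2})^{\,n+1}\,r^{-2(n+1)}\det(\partial_{a\overline{b}}r^{2}),
\]
which is positive since $\psi_{\lambda}'>0$.

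Finally, applying $dd^{c}\log$ turns this product into a sum of three terms. One has $\tfrac{d}{dx}\log\psi_{\lambda}'(x)=\psi_{\lambda}''(x)/\psi_{\lambda}'(x)=\lambda$ with $\psi_{\lambda}'(0)=1$, so $\log\psi_{\lambda}'(\log r^{2})=\lambda\log r^{2}$ and $dd^{c}\log\psi_{\lambda}'(\log r^{2})=\lambda\,dd^{c}\log r^{2}$; while \cref{cond:Sasakian-eta-Einstein-manifold-3}, i.e.~\eqref{eq:Sasakian-eta-Einstein-manifold}, gives $dd^{c}\log\det(\partial_{a\overline{b}}r^{2})=-(n+1)(\lambda-1)\,dd^{c}\log r^{2}$. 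Summing,
\[
	dd^{c}\log\mathcal{J}_{z}[\rho_{S}]
	=\big((n+1)\lambda-(n+1)-(n+1)(\lambda-1)\big)\,dd^{c}\log r^{2}=0 .
\]
I expect the only real work to be the bordered-determinant computation together with the identity $q=r^{2}$ — equivalently, the fact that the K\"ahler potential $r^{2}$ of the metric cone is a null solution $\mathcal{J}_{z}[r^{2}]=0$ of the bordered Monge--Amp\`ere operator; once that is in hand the reparametrization algebra with $\psi_{\lambda}$ and the final cancellation via the Einstein condition are routine.
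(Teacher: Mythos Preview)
Your proof is correct and follows essentially the same approach as the paper: both compute $\mathcal{J}_{z}[\rho_{S}]$ in closed form as $(1+\lambda\rho_{S})^{n+1}\,r^{-2(n+1)}\det(\partial_{a\overline{b}}r^{2})$ via elementary determinant manipulations, and then apply $dd^{c}\log$ together with the $\eta$-Einstein identity~\eqref{eq:Sasakian-eta-Einstein-manifold} to get the cancellation. Your intermediate step $q=u_{a}u^{a\overline{b}}u_{\overline{b}}=r^{2}$, obtained from the holomorphicity of $r\partial_{r}-\sqrt{-1}\,\xi$, is exactly the paper's $\|\partial r^{2}\|_{\overline{g}}^{2}=2r^{2}$ in slightly different packaging; the only cosmetic difference is that you first record the general Schur-complement formula for $\mathcal{J}_{z}[G(r^{2})]$ whereas the paper performs the column operations directly on $\rho_{S}$.
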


\begin{proof}
	To simplify notation,
	we write $\partial_{a} = \partial / \partial z^{a}$ and
	$\partial_{a \overline{b}} = \partial^{2} / \partial z^{a} \partial \overline{z}^{b}$. 
	\begin{align}
		\mathcal{J}_{z}[\rho_{S}]
		&= - \det
		\begin{pmatrix}
			\rho_{S} & \partial_{a} \rho_{S} \\
			\partial_{\overline{b}} \rho_{S} & \partial_{a \overline{b}} \rho_{S}
		\end{pmatrix} \\
		&= - \det
		\begin{pmatrix}
			\rho_{S} &  (1 + \lambda \rho_{S}) \partial_{a} \log r^{2} \\
			\partial_{\overline{b}} \log r^{2}
			& (1 + \lambda \rho_{S}) \partial_{a \overline{b}} \log r^{2}
		\end{pmatrix} \\
		&= - (1 + \lambda \rho_{S})^{n + 1} \det
		\begin{pmatrix}
			\rho_{S} & \partial_{a} \log r^{2} \\
			\partial_{\overline{b}} \log r^{2} & \partial_{a \overline{b}} \log r^{2}
		\end{pmatrix} \\
		&= - (1 + \lambda \rho_{S})^{n + 1} r^{- 2 (n + 1)}\det
		\begin{pmatrix}
			\rho_{S} & \partial_{a} r^{2} \\
			(1+\rho_{S}) r^{-2} \partial_{\overline{b}} r^{2} & \partial_{a \overline{b}} r^{2}
		\end{pmatrix}
		.
	\end{align}
	Since $r^{2}/2$ is a K\"{a}hler potential of $\overline{g}$,
	we have
	$\partial_{a \overline{b}} r^{2} = 2 \overline{g}(\partial_{a}, \partial_{\overline{b}})$.
	Thus
	\begin{align}
		\mathcal{J}_{z}[\rho_{S}]
		&= - (1 + \lambda \rho_{S})^{n + 1} r^{- 2 (n + 1)}
		\left[\rho_{S} - (1 + \rho_{S}) (2r^{2})^{-1}\| \partial r^{2} \|_{\overline{g}}^{2} \right]
		\det (\partial_{a \overline{b}} r^{2}) \\
		&= (1 + \lambda \rho_{S})^{n + 1} r^{- 2 (n + 1)} \det (\partial_{a \overline{b}} r^{2}),
	\end{align}
	where the last equality follows from $\| \partial r^{2} \|_{\overline{g}}^{2} = 2 r^{2}$.
	Therefore from \cref{eq:Sasakian-eta-Einstein-manifold},
	\begin{align}
		- dd^{c} \log \mathcal{J}_{z}[\rho_{S}]
		&= - (n + 1) (d d^{c} \log (1 + \lambda \rho_{S}) - dd^{c} \log r^{2})
		- dd^{c} \log \det (\partial_{a \overline{b}} r^{2}) \\
		&= - (n + 1) d (\lambda d^{c} \log r^{2}) + (n + 1) \lambda d d^{c} \log r^{2} \\
		&= 0.
	\end{align}
	This proves the statement.
\end{proof}

Next,
we construct a flat Hermitian metric of $K_{X}$ by using $\rho_{S}$.

\begin{lemma} \label{lem:flat-coord}
	For each point $p \in X$,
	there exists a local coordinate $z$ near $p$
	such that $\mathcal{J}_{z}[\rho_{S}] = 1$.
	Moreover, if $w = F(z)$ is also such a local coordinate, then $\det F'$ is
	a locally constant function
	with the absolute value one,
	where $F'$ is the holomorphic Jacobian of $F$.
\end{lemma}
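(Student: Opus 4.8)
The plan is to deduce this lemma directly from \cref{prop:solution-of-Fefferman-equation}. By that proposition, the function $u = \log \mathcal{J}_{z}[\rho_{S}]$ satisfies $dd^{c} u = 0$ on the domain of a given coordinate $z$, so $u$ is locally pluriharmonic; shrinking to a ball around $p$, we may write $u = \Re g$ for a holomorphic function $g$. The first step is to produce the new coordinate: I would look for a holomorphic change of variables $w = F(z)$ under which $\mathcal{J}$ transforms with a Jacobian factor. The key computational fact—which follows from the multilinearity of the determinant defining $\mathcal{J}_{z}[\phi]$ (expanding along the bordered block and using the chain rule $\partial_{a}\phi = (\partial w^{c}/\partial z^{a}) \partial_{c}^{w}\phi$ and similarly for the Hessian)—is the transformation rule
\begin{equation}
	\mathcal{J}_{z}[\rho_{S}] = |\det F'|^{2}\, \mathcal{J}_{w}[\rho_{S}].
\end{equation}
Given this, choosing $F$ with $\det F' = \exp(g/2)$ (possible on a simply connected neighborhood, by integrating: pick $n$ of the coordinates to be $z^{1},\dots,z^{n}$ and the last to satisfy $\partial w^{n+1}/\partial z^{n+1} = \exp(g/2)$, say $w^{n+1} = \int_{z^{n+1}_0}^{z^{n+1}} \exp(g/2)\,dt$ with the other slots frozen) yields $\mathcal{J}_{w}[\rho_{S}] = \mathcal{J}_{z}[\rho_{S}]\cdot |\det F'|^{-2} = e^{u} e^{-\Re g} = 1$, as desired. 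One must check $F$ is a biholomorphism near $p$, which holds since $\det F'(p) \ne 0$.

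For the uniqueness clause, suppose both $z$ and $w = F(z)$ satisfy $\mathcal{J}_{z}[\rho_{S}] = \mathcal{J}_{w}[\rho_{S}] = 1$. Plugging into the transformation rule gives $|\det F'|^{2} = 1$, so $|\det F'| \equiv 1$. Then $\det F'$ is a holomorphic function of constant modulus, hence locally constant (its modulus being constant forces it into the maximum principle, or directly: $\log|\det F'| = 0$ is pluriharmonic and equals $\Re \log \det F'$ up to a locally constant imaginary part, so $\log \det F'$ is locally constant). This gives the conclusion that $\det F'$ is locally constant with absolute value one.

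The main obstacle I expect is establishing the transformation rule $\mathcal{J}_{z}[\rho_{S}] = |\det F'|^{2}\,\mathcal{J}_{w}[\rho_{S}]$ cleanly. Naively the bordered determinant $\mathcal{J}$ is \emph{not} the determinant of the complex Hessian of $\rho_{S}$ alone—it is the Hessian of $\rho_{S}$ bordered by its gradient and the value $\rho_{S}$—so one needs to verify that the extra border row and column transform consistently under the holomorphic change of coordinates. The clean way is to observe that $\mathcal{J}_{z}[\phi]$ is (up to sign and a factor involving $\phi$) the determinant of the complex Hessian of an auxiliary function on $X \times \mathbb{C}$, or more directly to note that $\mathcal{J}_z[\rho_S] = (-1)^{n+1}\phi^{n+2}\det(\partial_{a\bar b}(-1/\phi))\cdot(\text{something})$—but rather than chasing such identities I would simply expand the $(n+2)\times(n+2)$ determinant, apply the chain rule to each entry involving first or second $z$-derivatives, and factor out $\det F'$ from the rows and $\overline{\det F'}$ from the columns corresponding to the differentiated slots; the value entry $\rho_S$ and the bordering structure are untouched except for this row/column scaling. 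This is routine multilinear algebra but needs care to get the border right.
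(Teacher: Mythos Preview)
Your proposal is correct and follows essentially the same route as the paper: use \cref{prop:solution-of-Fefferman-equation} to write $\log \mathcal{J}_{z}[\rho_{S}]$ locally as the real part of a holomorphic function, then modify a single coordinate by an antiderivative so that the holomorphic Jacobian absorbs this factor via the transformation law $\mathcal{J}_{w}[\phi] = |\det F'|^{-2}\,\mathcal{J}_{z}[\phi]$, and deduce the uniqueness clause from that same law together with the fact that a holomorphic function of constant modulus is locally constant. The paper likewise states the transformation law without a detailed derivation, so your discussion of how to verify it via row and column operations on the bordered determinant is, if anything, more thorough than what appears there.
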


\begin{definition}
	A local coordinate $z$ is called a \emph{flat local coordinate} if $\mathcal{J}_{z}[\rho_{S}] = 1$ holds.
\end{definition}

\begin{proof}[Proof of \cref{lem:flat-coord}]
	Take a local coordinate $w = (w^{1}, \dots , w^{n + 1})$ near $p$.
	From \cref{prop:solution-of-Fefferman-equation},
	$\log \mathcal{J}_{w} [\rho_{S}]$ is a pluriharmonic function.
	We may assume that this is the real part of a holomorphic function $f$;
	that is,
	\begin{equation}
		\mathcal{J}_{w}[\rho_{S}] = e^{\Re f},
	\end{equation}
	if we take a sufficiently small neighborhood of $p$.
	Take a holomorphic function $g$ such that $\partial g / \partial w^{1} = e^{f/2}$.
	In general,
	for another coordinate $w' = G(w)$ of $X$, 
	\begin{equation} \label{eq:transformation-law-of-Fefferman-operator}
		\mathcal{J}_{w'}[\phi] = |\det G'|^{-2}  \mathcal{J}_{w}[\phi]
	\end{equation}
	holds.
	Thus the new local coordinate $z = (z^{1} = g(w), z^{2} = w^{2}, \dots , z^{n + 1} = w^{n + 1})$
	satisfies $\mathcal{J}_{z}[\rho_{S}] =1$.
	The second statement follows from \cref{eq:transformation-law-of-Fefferman-operator}
	and the fact that a holomorphic function with its absolute value one is locally constant.
\end{proof}

\begin{corollary} \label{cor:flat-Hermitian-metric}
	There exists the unique flat Hermitian metric $\bm{h}_{S}$ on $K_{X}$ such that
	$dz^{1} \wedge \dots \wedge dz^{n + 1}$ is
	a local orthonormal frame of $K_{X}$ for any flat local coordinate $z$,
	or equivalently, $\bm{h}_{S}$ is written as $|z^{0}|^{2 (n + 2)}$,
	where $z^{0}$ is a branched fiber coordinate
	with respect to $z$.
\end{corollary}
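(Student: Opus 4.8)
The plan is to construct $\bm{h}_{S}$ chart by chart over flat local coordinates, to verify that the local pieces agree on overlaps using the second assertion of \cref{lem:flat-coord}, and then to read off both flatness and the explicit formula $|z^{0}|^{2(n+2)}$.

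By \cref{lem:flat-coord}, $X$ is covered by flat local coordinates. On such a chart $z=(z^{1},\dots,z^{n+1})$ I declare $dz^{1}\wedge\dots\wedge dz^{n+1}$ to be a unit-length local holomorphic frame of $K_{X}$; this prescribes a Hermitian inner product on each fiber of $K_{X}$ over the chart. If $w=F(z)$ is a second flat coordinate on an overlap, then $dw^{1}\wedge\dots\wedge dw^{n+1}=(\det F')\,dz^{1}\wedge\dots\wedge dz^{n+1}$, and \cref{lem:flat-coord} guarantees that $\det F'$ is locally constant of modulus one; hence the two prescribed norms agree on the overlap, and the local pieces glue to a global smooth Hermitian metric $\bm{h}_{S}$ on $K_{X}$. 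Uniqueness is immediate: the value on a fiber over any $p\in X$ of a Hermitian metric with the stated orthonormality property is forced once one picks a flat coordinate near $p$, and flat coordinates exist near every point.

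For flatness, I would note that near each point $\bm{h}_{S}$ is represented, in the holomorphic frame $dz^{1}\wedge\dots\wedge dz^{n+1}$ coming from a flat coordinate $z$, by the constant function $1$; since the curvature form of a Hermitian metric on a line bundle is $dd^{c}$ of the logarithm of a local representing function (up to a universal factor), it vanishes, so $\bm{h}_{S}$ is flat. Finally, to match the two descriptions I would unwind the identification of a Hermitian metric of $K_{X}$ with a positive function in $\widetilde{\mathscr{E}}(n+2)$ from \cref{subsection:ambient-space}: a point $\xi\in\mathcal{X}$ over $p$ has the form $\xi=\zeta\,(dz^{1}\wedge\dots\wedge dz^{n+1})|_{p}$ for a flat coordinate $z$, with branched fiber coordinate satisfying $z^{0}(\xi)=\zeta^{1/(n+2)}$, so the function representing $\bm{h}_{S}$ takes the value $|\zeta|^{2}=|z^{0}(\xi)|^{2(n+2)}$ at $\xi$, which is exactly the claimed formula. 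The single nontrivial point in the whole argument is the overlap compatibility of the local normalizations, and this is precisely the second statement of \cref{lem:flat-coord}; everything else is bookkeeping with the conventions of \cref{subsection:ambient-space}.
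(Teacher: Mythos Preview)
Your proof is correct and is precisely the argument the paper intends: the paper states this result as a corollary of \cref{lem:flat-coord} without giving a separate proof, so your write-up simply makes explicit the gluing and flatness verifications that the paper leaves to the reader. The only substantive input is indeed the second assertion of \cref{lem:flat-coord}, exactly as you identify.
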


\begin{proposition} \label{prop:Fefferman-defining-function-of-SeE}
	The defining function
	$\bm{\rho}_{S} = \rho_{S} \cdot \bm{h}_{S}^{1/(n + 2)} \in \widetilde{\mathcal{E}}(1)$
	of $\mathcal{S} = \pi_{\mathcal{X}}^{-1}(S)$
	satisfies
	\begin{equation}
		(dd^{c} \bm{\rho}_{S})^{n + 2} = k_{n} \vol_{\mathcal{X}}.
	\end{equation}
	In particular, the obstruction function $\bm{\mathcal{O}}$ vanishes on $\mathcal{X}$.
\end{proposition}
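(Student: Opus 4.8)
The plan is to verify \cref{eq:Fefferman-defining-function} directly, working in a \emph{flat} local coordinate and reducing the complex Monge--Amp\`ere operator to the operator $\mathcal{J}_{z}$ of \cref{prop:solution-of-Fefferman-equation}. Around an arbitrary point of $\mathcal{X}$ lying over $p \in X$, I would choose a flat local coordinate $z = (z^{1}, \dots , z^{n + 1})$ on $X$ near $p$ (it exists by \cref{lem:flat-coord}) and adjoin the associated branched fiber coordinate $z^{0}$, obtaining a local holomorphic coordinate system $(z^{0}, z^{1}, \dots , z^{n + 1})$ on $\mathcal{X}$. By \cref{cor:flat-Hermitian-metric}, $\bm{h}_{S} = |z^{0}|^{2(n + 2)}$ in this coordinate, so $\bm{\rho}_{S} = |z^{0}|^{2} \rho_{S}$, where $\rho_{S}$ now denotes the pullback to $\mathcal{X}$ of the function on $X$; note that $\bm{\rho}_{S}$ is globally defined on $\mathcal{X}$ since $\bm{h}_{S}$ and $\rho_{S}$ are. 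In the same coordinate $\bm{\zeta} = (z^{0})^{n + 2}\, dz^{1} \wedge \dots \wedge dz^{n + 1}$, hence $d\bm{\zeta} = (n + 2)\,(z^{0})^{n + 1}\, dz^{0} \wedge dz^{1} \wedge \dots \wedge dz^{n + 1}$.

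The key computation is the complex Hessian of $\bm{\rho}_{S} = |z^{0}|^{2}\rho_{S}$ with respect to $(z^{0}, \dots , z^{n + 1})$: it is the bordered matrix with $(0,\bar{0})$-entry $\rho_{S}$, with $(0,\bar{b})$- and $(a,\bar{0})$-entries $\bar{z}^{0}\,\partial_{\bar{b}}\rho_{S}$ and $z^{0}\,\partial_{a}\rho_{S}$, and with $(a,\bar{b})$-block $|z^{0}|^{2}\,\partial_{a}\partial_{\bar{b}}\rho_{S}$. Pulling a factor $z^{0}$ out of each of the rows $1, \dots , n + 1$ and a factor $\bar{z}^{0}$ out of each of the columns $1, \dots , n + 1$ leaves a bordered Hermitian matrix whose determinant is $-\mathcal{J}_{z}[\rho_{S}]$ (the identification of this determinant with the one in the definition of $\mathcal{J}_{z}$ uses that $\rho_{S}$ is real, so the bordered matrices involved are Hermitian); since $z$ is flat, $\mathcal{J}_{z}[\rho_{S}] = 1$. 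Hence the determinant of the complex Hessian of $\bm{\rho}_{S}$ equals $-|z^{0}|^{2(n + 1)}$.

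It then remains to compare two $(n + 2, n + 2)$-forms. Writing $dd^{c} = \sqrt{-1}\,\partial\overline{\partial}$ and using the identity $(\sqrt{-1}\,\partial\overline{\partial}\phi)^{n + 2} = (n + 2)!\,\det(\partial_{A}\partial_{\bar{B}}\phi)\bigwedge_{A = 0}^{n + 1}(\sqrt{-1}\, dz^{A} \wedge d\bar{z}^{A})$, the left-hand side of \cref{eq:Fefferman-defining-function} becomes $-(n + 2)!\,|z^{0}|^{2(n + 1)}\bigwedge_{A}(\sqrt{-1}\, dz^{A} \wedge d\bar{z}^{A})$. On the other hand, substituting the above expression for $d\bm{\zeta}$ into \cref{eq:canonical-volume-form} and using the reordering identity $(\sqrt{-1})^{(n + 2)^{2}}(dz^{0} \wedge \dots \wedge dz^{n + 1}) \wedge \overline{(dz^{0} \wedge \dots \wedge dz^{n + 1})} = \bigwedge_{A}(\sqrt{-1}\, dz^{A} \wedge d\bar{z}^{A})$ gives $\vol_{\mathcal{X}} = (n + 2)^{2}\,|z^{0}|^{2(n + 1)}\bigwedge_{A}(\sqrt{-1}\, dz^{A} \wedge d\bar{z}^{A})$. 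Dividing one by the other yields $(dd^{c}\bm{\rho}_{S})^{n + 2} = -((n + 2)!/(n + 2)^{2})\,\vol_{\mathcal{X}} = -((n + 1)!/(n + 2))\,\vol_{\mathcal{X}} = k_{n}\,\vol_{\mathcal{X}}$. Both sides are globally defined $(n + 2, n + 2)$-forms and flat coordinates cover $X$, so the identity holds on $\mathcal{X}$; comparing with \cref{eq:Fefferman-defining-function} shows that $\bm{\rho}_{S}$ is a Fefferman defining function with $\bm{\mathcal{O}} = 0$, and the uniqueness statement in \cref{prop:Fefferman-defining-function} gives that the obstruction function vanishes.

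The part needing the most care is the bookkeeping in the last step: correctly tracking the powers of $\sqrt{-1}$ and the reordering sign when passing between the $\bigwedge_{A}(\sqrt{-1}\, dz^{A} \wedge d\bar{z}^{A})$ normalization (natural for the Monge--Amp\`ere power) and the $(dz^{0} \wedge \dots) \wedge \overline{(dz^{0} \wedge \dots)}$ normalization (natural for $\vol_{\mathcal{X}}$), and checking that the minus sign built into the definition of $\mathcal{J}_{z}$ cancels against the minus sign coming from the Hessian determinant so that the constant comes out to be exactly $k_{n}$. Everything else is a routine determinant manipulation.
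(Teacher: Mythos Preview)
Your proof is correct and follows essentially the same route as the paper's own proof: work in a flat local coordinate with branched fiber coordinate $z^{0}$, compute the complex Hessian of $\bm{\rho}_{S} = |z^{0}|^{2}\rho_{S}$ as a bordered matrix, extract the factor $|z^{0}|^{2(n+1)}$ to reduce the determinant to $-\mathcal{J}_{z}[\rho_{S}] = -1$, and then compare with the local expression of $\vol_{\mathcal{X}}$ to obtain the constant $k_{n}$. The only cosmetic difference is that you package the volume form as $\bigwedge_{A}(\sqrt{-1}\,dz^{A}\wedge d\bar z^{A})$ while the paper writes it as $(\sqrt{-1})^{n+2}\,dz^{0}\wedge d\bar z^{0}\wedge\cdots$, which is the same thing.
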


\begin{proof}
	Take a flat local coordinate $(z^{1}, \dots, z^{n + 1})$
	and a branched fiber coordinate $z^{0}$ with respect to $z$.
	Then the volume form $\vol_{\mathcal{X}}$ is written as
	\begin{equation}
		\vol_{\mathcal{X}}
		= (\sqrt{-1})^{n + 2} (n + 2)^{2} |z^{0}|^{2 (n + 1)}
		d z^{0} \wedge d \overline{z}^{0} \wedge \dots \wedge d z^{n + 1} \wedge d \overline{z}^{n + 1}.
	\end{equation}
	On the other hand,
	the $(n + 2, n + 2)$-form $(d d^{c} \bm{\rho}_{S})^{n + 2}$ is of the form
	\begin{equation}
		(\sqrt{-1})^{n + 2} (n + 2)! \det (\partial^{2} \bm{\rho}_{S} / \partial z^{A} \partial \overline{z}^{B})
		d z^{0} \wedge d \overline{z}^{0} \wedge \dots \wedge d z^{n + 1} \wedge d \overline{z}^{n + 1}.
	\end{equation}
	Thus it suffices to show that
	\begin{equation}
		\det (\partial^{2} \bm{\rho}_{S} / \partial z^{A} \partial \overline{z}^{B}) = - |z^{0}|^{2 (n + 1)},
	\end{equation}
	which follows from the computation below:
	\begin{align}
		\det (\partial^{2} \bm{\rho}_{S} / \partial z^{A} \partial \overline{z}^{B})
		&= \det
		\begin{pmatrix}
			\rho_{S} & z^{0} \partial_{a} \rho_{S} \\
			\overline{z}^{0} \partial_{\overline{b}} \rho_{S}
			& |z^{0}|^{2} \partial_{a \overline{b}} \rho_{S}
		\end{pmatrix}
		\\
		&= |z^{0}|^{2 (n + 1)} \det
		\begin{pmatrix}
			\rho_{S} & \partial_{a} \rho_{S} \\
			\partial_{\overline{b}} \rho_{S} & \partial_{a \overline{b}} \rho_{S}
		\end{pmatrix}
		\\
		&= - |z^{0}|^{2 (n + 1)} \mathcal{J}_{z}[\rho_{S}] \\
		&= - |z^{0}|^{2 (n + 1)}.
	\end{align}
	Note that the last equality is a consequence of the definition of a flat local coordinate.
\end{proof}

\section{Proof of factorization theorem}
\label{section:proof-of-factorization-theorem}

This section is devoted to the proofs of
\cref{thm:formula-for-CR-invariant-powers-of-sub-Laplacian,thm:formula-for-P-prime-operator},
product formulas for CR invariant powers of the sub-Laplacian and the $P$-prime operator.
To prove these, introduce operators that change homogeneous degrees.

\begin{definition}
	Let $z$ be a flat local coordinate of $X = C(S)$
	and $z^{0}$ a branched fiber coordinate with respect to $z$.
	The operator $\bm{M}_{v, v'} \colon \widetilde{\mathscr{E}}(w, w')
	\to \widetilde{\mathscr{E}}(w+v, w'+v')$ is defined by
	the multiplication by $(z^{0})^{v} (\bar{z}^{0})^{v'}$.
	Note that $\bm{M}_{v, v}$ coincides with
	the multiplication by $\bm{h}_{S}^{v/(n + 2)}$.
\end{definition}

These multiplication operators $\bm{M}_{v, v'}$ induce differential operators on $S$
corresponding to $\bm{P}_{w, w'}$ and $\bm{P}'_{\eta}$.

\begin{definition} \label{def:operators-on-S}
	Let $(w, w') \in \mathbb{R}^{2}$ such that $k = w + w' + n + 1$ is a positive integer.
	A differential operator $P_{w, w'}$ on $C^{\infty}(S)$ is defined by
	\begin{equation}
		P_{w, w'} = \bm{M}_{k - w, k - w'} \circ \bm{P}_{w, w'} \circ \bm{M}_{w, w'}.
	\end{equation}
	Similarly, an operator $P'_{\eta} \colon \mathscr{P} \to C^{\infty}(S)$
	is defined by
	\begin{equation}
		P'_{\eta} = \bm{M}_{n + 1, n + 1} \circ \bm{P}'_{\eta}.
	\end{equation}
	These are independent of the choice of a flat local coordinate and a branched fiber coordinate.
\end{definition}

In the following,
we use the ambient metric $\bm{g} = \bm{g}[\bm{\rho}_{S}]$
for $\bm{\rho}_{S}$ defined in \cref{prop:Fefferman-defining-function-of-SeE}.
The most important ingredient for the proofs of
\cref{thm:formula-for-CR-invariant-powers-of-sub-Laplacian,thm:formula-for-P-prime-operator} is the following

\begin{proposition} \label{prop:power-of-Laplacian}
	\begin{equation} \label{eq:power-of-Laplacian1}
		\bm{\Delta}^{k}
		= \bm{M}_{-k-1, 0} (\bm{M}_{2, 0} \bm{\Delta})^{k} \bm{M}_{-k+1, 0}.
	\end{equation}
	If $k \geq 2$, then
	\begin{equation} \label{eq:power-of-Laplacian2}
		\bm{\Delta}^{k}
		= \bm{M}_{-1, -1} \bm{\Delta}^{k-2} \bm{M}_{0, k-1}
		\bm{\Delta} \bm{M}_{k, - k + 2} \bm{\Delta} \bm{M}_{-k + 1, 0}.
	\end{equation}
\end{proposition}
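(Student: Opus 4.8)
The plan is to reduce both identities to algebraic statements about the ambient Laplacian written in a flat local coordinate, where its structure becomes transparent. First I would fix a flat local coordinate $z=(z^{1},\dots,z^{n+1})$ of $X=C(S)$ and a branched fiber coordinate $z^{0}$, so that $\bm{\rho}_{S}=|z^{0}|^{2}\rho_{S}$ with $\rho_{S}$ pulled back from $X$ and $\mathcal{J}_{z}[\rho_{S}]=1$ (\cref{prop:solution-of-Fefferman-equation,lem:flat-coord}). Writing the complex Hessian of $\bm{\rho}_{S}$ in block form with respect to the splitting of the coordinates into $z^{0}$ and $z^{1},\dots,z^{n+1}$ and inverting it by a Schur-complement computation — in which $\mathcal{J}_{z}[\rho_{S}]=1$ identifies the Schur complement of the Hessian block $(\partial_{a}\partial_{\overline{b}}\rho_{S})$ — one obtains an explicit formula for the ambient Laplacian $\bm{\Delta}$ of $\bm{g}[\bm{\rho}_{S}]$. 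The upshot is that under the trivialization $\widetilde{\mathscr{E}}(w,w')\ni\bm{f}=(z^{0})^{w}(\overline{z}^{0})^{w'}f\leftrightarrow f\in C^{\infty}(X)$ one has
\begin{equation}
	\bm{\Delta}\bm{f}=(z^{0})^{w-1}(\overline{z}^{0})^{w'-1}\big(\mathcal{D}_{w,w'}f\big),\qquad
	\mathcal{D}_{w,w'}=A+wB+w'\overline{B}-ww'J,
\end{equation}
where $A$ is a second-order operator on $X$, $B$ (and its conjugate) is first order, and $J$ is multiplication by $\det(\partial_{a}\partial_{\overline{b}}\rho_{S})$; all are built from $\rho_{S}$ alone. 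In the same trivialization each multiplication operator $\bm{M}_{v,v'}$ acts as the identity on $C^{\infty}(X)$, since it merely relabels the homogeneity degree. Consequently any word in the $\bm{\Delta}$'s and $\bm{M}$'s becomes, after trivialization, an ordered product of operators $\mathcal{D}_{p,q}$ on $C^{\infty}(X)$, the labels $(p,q)$ being read off from the running degree.

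With this dictionary, a short bookkeeping of degrees shows that \cref{eq:power-of-Laplacian1} is equivalent to
\begin{equation}
	\mathcal{D}_{w-k+1,w'-k+1}\cdots\mathcal{D}_{w-1,w'-1}\,\mathcal{D}_{w,w'}
	=\mathcal{D}_{w,w'-k+1}\,\mathcal{D}_{w-1,w'-k+2}\cdots\mathcal{D}_{w-k+1,w'}
\end{equation}
as operators on $C^{\infty}(X)$, for $k=w+w'+n+1$: on the left the labels run down a diagonal through $(w,w')$ (successive labels differing by $(-1,-1)$), whereas on the right every factor $\mathcal{D}_{p,q}$ satisfies $p+q=-n$, and it is precisely this last feature — forced by $k=w+w'+n+1$ — that will, upon restriction to $S$, produce the factors $L_{\mu}$ of \cref{thm:formula-for-CR-invariant-powers-of-sub-Laplacian}. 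In the same way \cref{eq:power-of-Laplacian2} becomes a product identity in which $\bm{M}_{k,-k+2}$ and $\bm{M}_{0,k-1}$ are inserted exactly so that the first and the last $\bm{\Delta}$ are placed on the line $\{p+q=-n\}$ while the middle block $\bm{\Delta}^{k-2}$ remains on the diagonal.

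To prove these product identities I would differentiate the Monge--Amp\`{e}re equation $\mathcal{J}_{z}[\rho_{S}]=1$ once and twice (using Jacobi's formula for $\partial\log\det(\partial_{a}\partial_{\overline{b}}\rho_{S})$) to extract the commutation relations among $A$, $B$, $\overline{B}$ and $J$ that encode the Sasakian $\eta$-Einstein condition with Einstein constant $(n+1)\lambda$. These relations should yield two elementary moves: a swap $\mathcal{D}_{p-1,q-1}\mathcal{D}_{p,q}=\mathcal{D}_{p,q-1}\mathcal{D}_{p-1,q}$, and the mutual commutativity of the operators $\mathcal{D}_{p,q}$ with $p+q=-n$; the Einstein constant $\lambda$ enters through their structure constants. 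Repeatedly applying the swap moves each factor of the diagonal product onto the line $\{p+q=-n\}$ — the number of swaps being dictated by $k=w+w'+n+1$ — and mutual commutativity then reorders the result into the right-hand side, proving \cref{eq:power-of-Laplacian1}; \cref{eq:power-of-Laplacian2} follows from the same two moves.

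The hard part will be this last step: deriving the precise commutation relations from $\mathcal{J}_{z}[\rho_{S}]=1$ and keeping track of the non-constant function $J=\det(\partial_{a}\partial_{\overline{b}}\rho_{S})$. When the ambient metric is flat — as for the sphere, where in flat coordinates $\rho_{S}=|z|^{2}-1$ and $J\equiv1$ — the swap and the commutativity are immediate and the combinatorics is clean; for a general Sasakian $\eta$-Einstein manifold $J$ is genuinely non-constant, and it is in controlling the $J$-factors that one actually uses the Einstein condition and sees $\lambda$ emerge. The degree bookkeeping of the second step, and the verification that the sequence of swaps terminates on the line $\{p+q=-n\}$, are routine but must be done carefully.
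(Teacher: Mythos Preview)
Your trivialization and the dictionary between words in $\bm{\Delta},\bm{M}_{v,v'}$ and ordered products of $\mathcal{D}_{p,q}$'s is correct, and your swap $\mathcal{D}_{p-1,q-1}\mathcal{D}_{p,q}=\mathcal{D}_{p,q-1}\mathcal{D}_{p-1,q}$ is precisely the paper's relation $[\bm{\Delta},\bm{C}]=0$ with $\bm{C}=[\bm{\Delta},\bm{M}_{1,0}]$. But two things go wrong.

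First, your belief that $J=\det(\partial_{a}\partial_{\overline{b}}\rho_{S})$ is ``genuinely non-constant'' is false: in a flat local coordinate one has $\det(\partial_{a}\partial_{\overline{b}}\rho_{S})\equiv\lambda$. Your own Schur-complement inversion gives the $(0,\overline{0})$ entry of the inverse Hessian as $-\det(\partial_{a}\partial_{\overline{b}}\rho_{S})$, while the paper's explicit inverse metric in the frame $(Z_{0},Z_{\alpha},Z_{n+1})$ shows that this entry equals $-\lambda$; combining the two, $J$ is the constant $\lambda$. (Check the sphere: $\rho_{S}=|z|^{2}-1$ and $J=1=\lambda$. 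For $\lambda=0$ the block $(\partial_{a}\partial_{\overline{b}}\rho_{S})=\partial\overline{\partial}\log r^{2}$ is singular and your Schur-complement inversion is ill-posed --- another reason the paper works in the frame $(Z_{0},Z_{\alpha},Z_{n+1})$ rather than the coordinate splitting.) So the ``hard part'' you flagged is illusory, and this constancy is exactly where the Einstein constant enters, not through further differentiation of the Monge--Amp\`ere equation.

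Second, your combinatorics is incomplete. The swap applies only to an adjacent pair whose labels differ by $(1,1)$; after one application the neighboring labels no longer differ by $(1,1)$, and already for $k=3$ the product $\mathcal{D}_{w-2,w'-2}\mathcal{D}_{w-1,w'-1}\mathcal{D}_{w,w'}$ cannot be brought to $\mathcal{D}_{w,w'-2}\mathcal{D}_{w-1,w'-1}\mathcal{D}_{w-2,w'}$ by swaps (even together with commutativity on a fixed line). The paper sidesteps this by keeping $\bm{C}$ as an auxiliary operator: it proves $[\bm{\Delta},\bm{M}_{v,0}]=v\bm{M}_{v-1,0}\bm{C}$, $[\bm{C},\bm{M}_{v,v'}]=\lambda v'\bm{M}_{v,v'-1}$ and $[\bm{\Delta},\bm{C}]=0$ directly from the explicit ambient metric and its Ricci-flatness, derives the one-line consequence $\bm{M}_{-1,0}\bm{\Delta}^{k}\bm{M}_{1,0}=\bm{\Delta}^{k}+k\bm{M}_{-1,0}\bm{\Delta}^{k-1}\bm{C}$, and then establishes both \cref{eq:power-of-Laplacian1,eq:power-of-Laplacian2} by a short induction on $k$. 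That induction --- not the control of a variable $J$ --- is the missing idea in your plan.
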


We will give the proof of this proposition later. 
We need additionally the following lemma to obtain the factorization formula
for the $P$-prime operator.

\begin{lemma} \label{lem:boundary-value-of-intermediate-operator}
	Let $\Upsilon$ be a CR pluriharmonic function on $S$
	and $\widetilde{\Upsilon}$ its pluriharmonic extension.
	Then the function $\bm{I} \widetilde{\Upsilon}$ defined by
	\begin{equation}
		\bm{I} \widetilde{\Upsilon}
		= \bm{M}_{0, n} \bm{\Delta} \bm{M}_{n + 1, - n + 1} \bm{\Delta} \bm{M}_{- n, 0}
		(\widetilde{\Upsilon} \log |z^{0}|^{2})
	\end{equation}
	is an element of $\widetilde{\mathscr{E}}(-1)$
	modulo a term that vanishes to infinite order at $\mathcal{S}$.
\end{lemma}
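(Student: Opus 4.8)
The plan is to expand $\bm{I}\widetilde{\Upsilon}$ by the Leibniz rule and track only the coefficient of the logarithm $\log|z^{0}|^{2}$; the log-free remainder will lie in $\widetilde{\mathscr{E}}(-1)$ automatically by a bidegree count, so the lemma reduces to showing that this logarithmic coefficient vanishes to infinite order at $\mathcal{S}$. (The operator $\bm{I}$ is obtained from \cref{eq:power-of-Laplacian2}, taken with $k=n+1$, by dropping the leading factor $\bm{M}_{-1,-1}\bm{\Delta}^{n-1}$, which is why exactly this string of multiplications and Laplacians appears.)

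The identity driving the computation is the following. Write $\bm{s}:=\log|z^{0}|^{2}$, so that $\partial_{A}\bm{s}=\delta_{A}^{0}/z^{0}$, $\partial_{\overline{B}}\bm{s}=\delta_{B}^{0}/\overline{z}^{0}$ and $\partial_{A}\partial_{\overline{B}}\bm{s}=0$. Then for $v\in\widetilde{\mathscr{E}}(w,w')$,
\[
	\bm{\Delta}(v\bm{s})=(\bm{\Delta}v)\,\bm{s}-\frac{1}{\overline{z}^{0}}\,\bm{g}^{A\overline{0}}\partial_{A}v-\frac{1}{z^{0}}\,\bm{g}^{0\overline{B}}\partial_{\overline{B}}v ,
\]
and the last two terms lie in $\widetilde{\mathscr{E}}(w-1,w'-1)$. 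Applying this identity twice, multiplying through by the outer factors, and using that $\bm{\Delta}\widetilde{\Upsilon}=0$ (since $\widetilde{\Upsilon}$ is pluriharmonic and independent of the fibre coordinate, so all intermediate functions are genuinely homogeneous), I would obtain
\[
	\bm{I}\widetilde{\Upsilon}=\bigl[(\overline{z}^{0})^{n}\,\bm{\Delta}v_{1}\bigr]\,\bm{s}+\bigl(\text{an element of }\widetilde{\mathscr{E}}(-1)\bigr),
\]
where $v_{0}:=(z^{0})^{-n}\widetilde{\Upsilon}\in\widetilde{\mathscr{E}}(-n,0)$ and $v_{1}:=(z^{0})^{n+1}(\overline{z}^{0})^{-n+1}\bm{\Delta}v_{0}\in\widetilde{\mathscr{E}}(0,-n)$, the bidegree bookkeeping using only that $\bm{\Delta}$ lowers bidegree by $(1,1)$. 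Thus the lemma is equivalent to the assertion that $\bm{\Delta}v_{1}$ vanishes to infinite order at $\mathcal{S}$.

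Next I would make $v_{1}$ explicit: pluriharmonicity and fibre-independence of $\widetilde{\Upsilon}$ give $\bm{\Delta}v_{0}=n(z^{0})^{-n-1}\bm{g}^{0\overline{b}}\partial_{\overline{b}}\widetilde{\Upsilon}$, so $v_{1}=n(\overline{z}^{0})^{-n+1}\bm{g}^{0\overline{b}}\partial_{\overline{b}}\widetilde{\Upsilon}$. Passing to a flat local coordinate $z$ on $X$ and using the block form of the inverse ambient metric---so that $\bm{g}^{0\overline{0}}$, $\bm{g}^{0\overline{b}}$ and $\bm{g}^{a\overline{b}}$ are expressed through $\rho_{S}$, the inverse of $(\partial_{a}\partial_{\overline{b}}\rho_{S})$, and the normalization $\mathcal{J}_{z}[\rho_{S}]=1$ (\cref{prop:Fefferman-defining-function-of-SeE})---one reduces $\bm{\Delta}v_{1}$, up to the nowhere-vanishing factor $(z^{0})^{-1}(\overline{z}^{0})^{-n-1}$, to a second-order differential expression in $\rho_{S}$ and $\widetilde{\Upsilon}$ defined purely on $X$.

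The main obstacle is the final step: showing that this expression vanishes to infinite order at $S=\{\rho_{S}=0\}$. This is precisely where the Sasakian $\eta$-Einstein hypothesis enters decisively: by \cref{prop:Fefferman-defining-function-of-SeE} it makes $\bm{\rho}_{S}$ an \emph{exact} solution of the Monge--Amp\`ere equation with $\bm{\mathcal{O}}\equiv 0$, so no resonant logarithmic obstruction can persist---morally, the logarithmic coefficient of $\bm{I}\widetilde{\Upsilon}$ is built from $\bm{\mathcal{O}}$ and its derivatives, which vanish. Concretely I would establish the vanishing by combining: (i) $\partial_{a}\partial_{\overline{b}}\widetilde{\Upsilon}=0$; (ii) $\rho_{S}=\psi_{\lambda}(\log r^{2})$ with $\psi_{\lambda}'=1+\lambda\psi_{\lambda}$ and $\psi_{\lambda}''=\lambda\psi_{\lambda}'$; (iii) $r^{2}/2$ being a K\"ahler potential of $\overline{g}$ and $\|\partial r^{2}\|_{\overline{g}}^{2}=2r^{2}$; and (iv) differentiating the identity $\mathcal{J}_{z}[\rho_{S}]=1$ on $X$. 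The lemma then follows, with the logarithm-free part of the above expansion of $\bm{I}\widetilde{\Upsilon}$ furnishing the asserted element of $\widetilde{\mathscr{E}}(-1)$.
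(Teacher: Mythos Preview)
Your reduction is correct: writing $v_{0}=(z^{0})^{-n}\widetilde{\Upsilon}$ and $v_{1}=(z^{0})^{n+1}(\overline{z}^{0})^{-n+1}\bm{\Delta}v_{0}$, the logarithmic coefficient of $\bm{I}\widetilde{\Upsilon}$ is $(\overline{z}^{0})^{n}\bm{\Delta}v_{1}$, so the lemma is equivalent to $\bm{\Delta}v_{1}\equiv 0$ modulo $O(\rho_{S}^{\infty})$. (A small caveat: $\widetilde{\Upsilon}$ is pluriharmonic only on the pseudoconvex side, so identities like $\bm{\Delta}\widetilde{\Upsilon}=0$ hold only modulo $O(\rho_{S}^{\infty})$; this should be said explicitly.) Where your proposal diverges from the paper is in how this vanishing is established.

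The paper never works in the flat local coordinates $(z^{1},\dots,z^{n+1})$ on $X$; it uses the adapted frame $(Z_{0},Z_{\alpha},Z_{n+1})$ with $Z_{n+1}=\tfrac{1}{2}(r\partial_{r}-\sqrt{-1}\xi)$ and the explicit inverse metric \cref{eq:matrix-representation}. In that frame $\bm{g}^{0\overline{B}}Z_{\overline{B}}=|z^{0}|^{-2}(-\lambda Z_{\overline{0}}+Z_{\overline{n+1}})$, so one finds directly
\[
v_{1}\equiv n(\overline{z}^{0})^{-n}\,Z_{\overline{n+1}}\widetilde{\Upsilon}.
\]
The crucial point, already proved in the paper when establishing $[\bm{\Delta},\bm{C}]=0$, is that $Z_{n+1}$ is a \emph{holomorphic} vector field. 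Hence $Z_{\overline{n+1}}\widetilde{\Upsilon}$ is antiholomorphic on the pseudoconvex side (locally $\widetilde{\Upsilon}=\Re F$ with $F$ holomorphic, so $Z_{\overline{n+1}}\widetilde{\Upsilon}=\tfrac{1}{2}\overline{Z_{n+1}F}$), and therefore $v_{1}$ is antiholomorphic modulo $O(\rho_{S}^{\infty})$, giving $\bm{\Delta}v_{1}\equiv 0$ immediately. No block-matrix inversion, no differentiation of $\mathcal{J}_{z}[\rho_{S}]=1$, and no appeal to $\bm{\mathcal{O}}=0$ is needed for this step.

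Your route via flat local coordinates and items (i)--(iv) is not wrong in principle, but the ``main obstacle'' you identify is exactly the content of the lemma, and your outline does not actually overcome it: the moral argument that the log-coefficient is ``built from $\bm{\mathcal{O}}$'' is heuristic, and carrying out the coordinate computation without the frame amounts to rediscovering, through messy algebra, that $\bm{g}^{0\overline{b}}\partial_{\overline{z}^{b}}$ is (up to a scalar) the antiholomorphic vector field $Z_{\overline{n+1}}$. Switching to the frame $(Z_{0},Z_{\alpha},Z_{n+1})$ turns that hidden fact into a one-line observation, and also delivers the explicit formula for $(\bm{I}\widetilde{\Upsilon})|_{\mathcal{S}}$ needed for \cref{thm:formula-for-P-prime-operator}.
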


The proof of \cref{lem:boundary-value-of-intermediate-operator} 
is delayed to the end of this section.
Computations of the homogeneous degrees in
\cref{eq:power-of-Laplacian1,eq:power-of-Laplacian2} give the following

\begin{corollary}
	The operator $\bm{P}_{w, w'}$ for $k = w + w' + n + 1$ has the formula
	\begin{equation}
		\bm{P}_{w, w'} = \bm{M}_{-k-1, 0} \prod_{j=0}^{k-1}
		(\bm{M}_{2,0} \bm{L}_{w'-w+k-2j-1}) \bm{M}_{-k+1, 0},
	\end{equation}
	where
	\begin{equation}
		\bm{L}_{\mu}
		= \bm{P}_{\frac{- \mu - n}{2}, \frac{\mu - n}{2}}.
	\end{equation}
	Similarly, the $P$-prime operator $\bm{P}'_{\eta}$ is written as
	\begin{equation}
		\bm{P}'_{\eta} \Upsilon
		= - \bm{M}_{-1, -1} \bm{P}_{-1, -1} [(\bm{I} \widetilde{\Upsilon})|_{\mathcal{S}}].
	\end{equation}
\end{corollary}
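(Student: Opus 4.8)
The plan is to derive the two displayed formulas for $\bm{P}_{w,w'}$ and $\bm{P}'_\eta$ as straightforward bookkeeping consequences of \cref{prop:power-of-Laplacian} and \cref{lem:boundary-value-of-intermediate-operator}, the only real content being careful tracking of homogeneous degrees. First I would recall from the lemma of Gover--Graham that, for $k = w+w'+n+1$, the operator $\bm{P}_{w,w'}$ is obtained by restricting $\bm{\Delta}^k$ to $\mathcal{S}$ after extending homogeneously; so it suffices to rewrite $\bm{\Delta}^k$ using \cref{eq:power-of-Laplacian1}. Applying that identity and then interleaving, between consecutive factors $\bm{M}_{2,0}\bm{\Delta}$, the degree-shifting operators $\bm{M}$ so that each $\bm{\Delta}$ acts on a homogeneous function of the balanced bidegree $\bigl(\tfrac{-\mu-n}{2},\tfrac{\mu-n}{2}\bigr)$ for which its restriction is $\bm{L}_\mu$, one reads off that the $j$-th factor (counting $j=0,\dots,k-1$) carries $\mu = w'-w+k-2j-1$. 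Indeed, the total bidegree of the argument entering the $j$-th $\bm{\Delta}$ is $(w,w')$ shifted up by $j$ applications of $\bm{M}_{2,0}$ on the left-input side together with the prefactor $\bm{M}_{-k+1,0}$, and the requirement that this bidegree be balanced pins down $\mu$; summing $\mu$ over $j$ consistently recovers the outer normalization $\bm{M}_{-k-1,0}$ on the left and $\bm{M}_{-k+1,0}$ on the right. The initial and terminal $\bm{M}$-factors are exactly those appearing in \cref{eq:power-of-Laplacian1}, so the formula for $\bm{P}_{w,w'}$ follows.

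For the $P$-prime operator I would start from its definition \cref{eq:P-prime-operator},
\begin{equation*}
	\bm{P}'_\eta \Upsilon = -(n+2)^{-1}[\bm{\Delta}^{n+1}(\widetilde\Upsilon \log \bm{h}_\rho)]|_{\mathcal{S}},
\end{equation*}
and use that in our flat coordinates $\bm{h}_S = |z^0|^{2(n+2)}$, so $\log \bm{h}_S = (n+2)\log|z^0|^2$, which cancels the $(n+2)^{-1}$ and turns the argument into $\widetilde\Upsilon \log|z^0|^2$. Now apply \cref{eq:power-of-Laplacian2} with $k=n+1$: this exhibits $\bm{\Delta}^{n+1}$ acting on $\widetilde\Upsilon\log|z^0|^2$ as $\bm{M}_{-1,-1}\,\bm{\Delta}^{n-1}\,\bigl(\bm{M}_{0,n}\bm{\Delta}\bm{M}_{n+1,-n+1}\bm{\Delta}\bm{M}_{-n,0}\bigr)(\widetilde\Upsilon\log|z^0|^2)$. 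The inner bracketed operator is precisely $\bm{I}$ from \cref{lem:boundary-value-of-intermediate-operator}, which tells us $\bm{I}\widetilde\Upsilon \in \widetilde{\mathscr{E}}(-1)$ up to a term vanishing to infinite order at $\mathcal{S}$. Since such an infinitely-flat term contributes nothing after applying further derivatives and restricting to $\mathcal{S}$, what remains is $\bm{M}_{-1,-1}\,\bm{\Delta}^{n-1}$ applied to a genuine homogeneous function $\bm{I}\widetilde\Upsilon$ of bidegree $(-1,-1)$. But $\bm{\Delta}^{n-1}$ restricted to $\widetilde{\mathscr{E}}(-1,-1)$, for which $k = (-1)+(-1)+n+1 = n-1$, is by definition the operator $\bm{P}_{-1,-1}$ applied to $(\bm{I}\widetilde\Upsilon)|_{\mathcal{S}}$. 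Pulling out the overall sign and the outer $\bm{M}_{-1,-1}$ gives $\bm{P}'_\eta\Upsilon = -\bm{M}_{-1,-1}\bm{P}_{-1,-1}[(\bm{I}\widetilde\Upsilon)|_{\mathcal{S}}]$, as claimed.

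The main obstacle I anticipate is purely notational rather than conceptual: getting every $\bm{M}$-index and the resulting $\mu$-pattern exactly right in the $\bm{P}_{w,w'}$ computation, since one is composing $k$ operators each of which shifts bidegree and the balanced-bidegree condition for $\bm{L}_\mu$ (namely bidegree $\bigl(\tfrac{-\mu-n}{2},\tfrac{\mu-n}{2}\bigr)$, whose entries sum to $-n$, consistent with $\bm{\Delta}$ dropping each index by one and $k=1$ for $\bm{L}_\mu$) must be verified at each stage. A careful way to organize this is to insert between the $j$-th and $(j+1)$-th factor the unique pair $\bm{M}_{a_j,a_j'}$ making the degrees work, check that the inserted pairs telescope (so that the composite of all inserted $\bm{M}$'s together with the outer ones from \cref{eq:power-of-Laplacian1} reduces to the identity up to the stated normalization), and then confirm $a_j - a_j' $ or the relevant combination yields $\mu = w'-w+k-2j-1$. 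Once this indexing is pinned down, both formulas drop out immediately; no analytic input beyond \cref{prop:power-of-Laplacian} and \cref{lem:boundary-value-of-intermediate-operator} is needed, and the observation that infinitely-flat terms die upon restriction handles the one subtlety in the $P$-prime case.
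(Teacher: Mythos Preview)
Your proposal is correct and follows the same approach the paper indicates: the corollary is stated as an immediate consequence of \cref{prop:power-of-Laplacian} and \cref{lem:boundary-value-of-intermediate-operator} via tracking homogeneous degrees, and you have unpacked precisely that. One small simplification for the first formula: you do not actually need to insert and then telescope auxiliary $\bm{M}$-operators, since the bidegree sum after $\bm{M}_{-k+1,0}$ is already $w+w'-k+1=-n$ and each factor $\bm{M}_{2,0}\bm{\Delta}$ preserves this sum, so every $\bm{\Delta}$ in the chain automatically acts on a balanced bidegree and restricts to the appropriate $\bm{L}_{\mu}$ with $\mu$ equal to the difference of the second and first indices at that stage.
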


From this observation,
it suffices to prove \cref{prop:power-of-Laplacian},
\cref{lem:boundary-value-of-intermediate-operator},
\begin{equation} \label{eq:L-operators}
	L_{\mu} = \bm{M}_{\frac{\mu + n + 2}{2}, \frac{- \mu + n + 2}{2}}
	\circ \bm{L}_{\mu} \circ \bm{M}_{\frac{- \mu - n}{2}, \frac{\mu - n}{2}},
\end{equation}
and
\begin{equation} \label{eq:I-operators}
	\bm{I} \widetilde{\Upsilon}|_{\mathcal{S}}
	= - n^{-1} \bm{M}_{-1, -1} (\Delta_{b}^{2} + n^{2} \lambda \Delta_{b}) \Upsilon
\end{equation}
for the proofs of \cref{thm:formula-for-CR-invariant-powers-of-sub-Laplacian,thm:formula-for-P-prime-operator}.

To show \cref{prop:power-of-Laplacian},
define a differential operator $\bm{C}$ by
\begin{equation}
	\bm{C} = [\bm{\Delta}, \bm{M}_{1, 0}].
\end{equation}
Key commutation relations are the following

\begin{lemma}
	\begin{equation}
		[\bm{\Delta}, \bm{M}_{v, 0}] = v \bm{M}_{v-1, 0} \bm{C},
		\quad [\bm{C}, \bm{M}_{v, v'}] = \lambda v' \bm{M}_{v, v'-1},
		\quad [\bm{\Delta}, \bm{C}] = 0.
	\end{equation}
\end{lemma}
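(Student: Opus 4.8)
The plan is to work in a flat local coordinate $z = (z^1, \dots, z^{n+1})$ with a branched fiber coordinate $z^0$, so that $\bm{\rho}_S = \rho_S \cdot |z^0|^2$ and the ambient metric $\bm{g} = \bm{g}[\bm{\rho}_S]$ has the explicit K\"ahler potential $\bm{\rho}_S$ with respect to the $(n+2)$ coordinates $z^0, z^1, \dots, z^{n+1}$. The first identity $[\bm{\Delta}, \bm{M}_{v,0}] = v \bm{M}_{v-1,0} \bm{C}$ should follow from the Leibniz-type structure of $\bm{\Delta}$ together with the fact that $\bm{M}_{v,0}$ is multiplication by $(z^0)^v$: since $z^0$ is holomorphic, $\bm{\Delta}$ acting on a product $(z^0)^v \bm{f}$ produces $(z^0)^v \bm{\Delta}\bm{f}$ plus a cross term coming from the first-order part of $\bm{\Delta}$ contracted against $\partial(z^0)^v = v(z^0)^{v-1}\, dz^0$. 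Factoring out $(z^0)^{v-1}$ and recognizing the remaining operator as $\bm{C} = [\bm{\Delta}, \bm{M}_{1,0}]$ (the $v=1$ case) gives the claim; one should be slightly careful that the coefficient is exactly $v$ and not, say, $v$ plus a curvature correction, which is where the flatness of $\bm{h}_S$ (equivalently $\mathcal{J}_z[\rho_S] = 1$, so $\bm{g}$ is Ricci-flat) enters.

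For the second identity $[\bm{C}, \bm{M}_{v,v'}] = \lambda v' \bm{M}_{v,v'-1}$, I would compute $\bm{C}$ more explicitly. Write $\bm{\Delta} = \bm{g}^{A\overline{B}} \partial_A \partial_{\overline{B}} + (\text{first order})$; then $\bm{C} = [\bm{\Delta}, \bm{M}_{1,0}]$ is a first-order operator whose top-order part is $\bm{g}^{0\overline{B}} \partial_{\overline{B}}$ (up to a factor), i.e. essentially the ``$\overline{\partial}$ in the $z^0$-direction'' weighted by the inverse metric. Commuting this against multiplication by $(z^0)^v(\bar z^0)^{v'}$ kills the $(z^0)^v$ factor (holomorphic, and $\bm{C}$'s antiholomorphic derivative annihilates it) and differentiates $(\bar z^0)^{v'}$, producing $v' (\bar z^0)^{v'-1}$ times $\bm{g}^{0\overline{0}}$ or a related component. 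The key computational input is that the relevant inverse-metric component, evaluated against the structure of $\bm{\rho}_S = \psi_\lambda(\log r^2)|z^0|^2$, contributes exactly the Einstein constant $\lambda$ — this is the step where the $\eta$-Einstein hypothesis is genuinely used, via $\psi_\lambda'' = \lambda \psi_\lambda'$ and the block structure of the Hessian of $\bm{\rho}_S$ computed in the proof of \cref{prop:Fefferman-defining-function-of-SeE}. I expect $\bm{C}^2$ or the precise form of $\bm{g}^{A\overline B}$ to be mildly unpleasant, but the Ricci-flat, cone-like structure should keep it manageable.

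The third identity $[\bm{\Delta}, \bm{C}] = 0$ is the one I expect to be the main obstacle, since it is not a formal consequence of the previous two. The cleanest route is to observe that $\bm{C} = [\bm{\Delta}, \bm{M}_{1,0}]$ and that $\bm{M}_{1,0}$, being multiplication by the holomorphic function $z^0$, has the property that $Z_0 = z^0 \partial_{z^0}$ acts on it; alternatively, one can try to identify $\bm{C}$ (up to a constant) with the operator $\bm{g}(\overline{\partial}\, \overline{z}^0, \cdot)$ contracted into $\overline{\partial}$, i.e. the Lie derivative along a holomorphic Killing-type vector field for the ambient K\"ahler structure, and then $[\bm{\Delta}, \bm{C}] = 0$ because $\bm{\Delta}$ commutes with such symmetries. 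Concretely: the ambient metric is invariant under the $\mathbb{C}^\times$-dilation generated by $Z_0$, and is moreover (for the Sasakian cone) invariant under the larger flow mixing $z^0$ and the cone direction; $\bm{C}$ should be, up to $\bm{M}$-factors, the infinitesimal generator of one such isometry of $\bm{g}$, whence $[\bm{\Delta},\bm{C}]=0$ by naturality of the Laplacian under isometries. If a slick symmetry argument is not available, the fallback is a direct computation: both $\bm{\Delta}$ and $\bm{C}$ are explicit in the flat coordinate once $\bm{\rho}_S$ is written out, and one checks the commutator vanishes using $\psi_\lambda' = 1 + \lambda\psi_\lambda$ and the Ricci-flatness $dd^c \log\det(\partial^2\bm{\rho}_S/\partial z^A\partial\bar z^B) = 0$. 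I would present the symmetry argument if it goes through cleanly, and otherwise fall back to the coordinate computation, isolating the one place where $\lambda$-independence of the commutator (despite $\bm{C}$ itself depending on $\lambda$) is forced by the Monge--Amp\`ere normalization.
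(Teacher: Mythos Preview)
Your treatment of the first two identities is essentially the paper's: write $f = z^{0}$, note $f$ is holomorphic, and use the Leibniz rule for the K\"ahler Laplacian $\bm{\Delta} = -\tensor{\bm{\nabla}}{_{A}}\tensor{\bm{\nabla}}{^{A}}$ to get $\bm{C} = -\tensor{f}{_{A}}\tensor{\bm{\nabla}}{^{A}}$, whence $[\bm{\Delta},\bm{M}_{v,0}] = v\bm{M}_{v-1,0}\bm{C}$ is immediate (your worry about a curvature correction is unfounded here---holomorphicity of $f$ alone suffices, no Ricci-flatness needed). For the second identity the paper does exactly what you sketch: $[\bm{C},\bm{M}_{v,v'}] = -v' f^{v}\overline{f}^{\,v'-1}\tensor{f}{_{A}}\tensor{\overline{f}}{^{A}}$, and the scalar $\tensor{f}{_{A}}\tensor{\overline{f}}{^{A}}$ is read off as $-\lambda$ from the $(0,\overline{0})$ entry of the inverse metric, computed in an explicit frame $(Z_{0},Z_{\alpha},Z_{n+1})$ with $Z_{n+1}=\tfrac{1}{2}(r\partial_{r}-\sqrt{-1}\,\xi)$.

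For the third identity, however, your primary ``symmetry'' route is not what the paper does and is not clearly viable: $\bm{C}$ is (minus) differentiation along the $(0,1)$-vector $\tensor{f}{^{\overline{B}}}\partial_{\overline{B}}$, and there is no reason a priori for this to be a Killing field of $\bm{g}$. The paper instead takes (a sharpened version of) your fallback, and the key reduction you are missing is this: expanding $\bm{\Delta}\bm{C}$ and using Ricci-flatness to commute covariant derivatives gives
\[
[\bm{\Delta},\bm{C}] \;=\; -\,\tensor{f}{_{A}_{B}}\,\tensor{\bm{\nabla}}{^{A}}\tensor{\bm{\nabla}}{^{B}},
\]
so everything reduces to showing the holomorphic Hessian $\tensor{f}{_{A}_{B}}$ of $f=z^{0}$ vanishes. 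This is \emph{not} a generic feature of holomorphic functions on K\"ahler manifolds---it is a statement that $z^{0}$ is affine for the Levi--Civita connection of $\bm{g}$---and the paper verifies it by computing the relevant Christoffel terms $\bm{g}(\bm{\nabla}_{Z_{B}}Z_{A},Z_{\overline{C}})$ for $C\in\{0,n+1\}$ in the frame above, exploiting that $Z_{\overline{0}}$ and $Z_{\overline{n+1}}$ are anti-holomorphic so that $\bm{g}(\bm{\nabla}_{Z_{B}}Z_{A},Z_{\overline{C}}) = Z_{B}\bigl(\bm{g}(Z_{A},Z_{\overline{C}})\bigr)$. Your proposal would benefit from isolating $\tensor{f}{_{A}_{B}}=0$ as the target and carrying out this frame computation, rather than hoping for an isometry argument.
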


\begin{proof}
	Define a $(1, 0)$-vector field $Z_{n + 1}$ on $X$ by
	\begin{equation}
		Z_{n + 1} = \frac{1}{2} (r \partial_{r} - \sqrt{-1} \xi).
	\end{equation}
	A direct computation shows that 
	$[Z_{n + 1}, \overline{W}] \in \Gamma(T^{0, 1} X)$
	for any $\overline{W} \in \Gamma(T^{0, 1} X)$,
	which implies that $Z_{n + 1}$ is a holomorphic vector field.
	A local frame $(Z_{\alpha})$ of $T^{1, 0} S$ induces
	the frame $(Z_{0}, Z_{\alpha}, Z_{n + 1})$ of $T^{1, 0} \mathcal{X}$.
	With this frame, the matrix representations of $\bm{g}$ and its inverse are given by
	\begin{equation}
		\tensor{\bm{g}}{_{A}_{\overline{B}}}
		= |z^{0}|^{2}
		\begin{pmatrix}
			\rho_{S} & 0 & 1 + \lambda \rho_{S} \\
			0 & (1 + \lambda \rho_{S}) \tensor{l}{_{\alpha} _{\overline{\beta}}} & 0 \\
			1 + \lambda \rho_{S} & 0 & \lambda (1 + \lambda \rho_{S})
		\end{pmatrix}
		,
	\end{equation}
	and
	\begin{equation} \label{eq:matrix-representation}
		\tensor{\bm{g}}{^{A}^{\overline{B}}} = |z^{0}|^{-2}
		\begin{pmatrix}
			- \lambda & 0 & 1 \\
			0 & (1 + \lambda \rho_{S})^{-1} \tensor{l}{^{\alpha} ^{\overline{\beta}}} & 0 \\
			1 & 0 & -\rho_{S} (1 + \lambda \rho_{S})^{-1}
		\end{pmatrix}
		.
	\end{equation}
	
	Denote by $f$ the holomorphic function $z^{0}$ for simplicity.
	Since the Laplacian is of the form
	$- \tensor{\bm{\nabla}}{_{A}} \tensor{\bm{\nabla}}{^{A}}$,
	\begin{equation}
		\bm{C} = - \tensor{f}{_{A}} \tensor{\bm{\nabla}}{^{A}}.
	\end{equation}
	Hence
	\begin{equation}
		[\bm{\Delta}, \bm{M}_{v ,0}]
		= - v f^{v-1} \tensor{f}{_{A}} \tensor{\nabla}{^{A}}
		= v \bm{M}_{v-1, 0} \bm{C},
	\end{equation}
	and
	\begin{equation}
		[\bm{C}, \bm{M}_{v, v'}]
		= - v' f^{v} \overline{f}^{v'-1} \tensor{f}{_{A}} \tensor{\overline{f}}{^{A}}
		= \lambda v' \bm{M}_{v, v'-1}.
	\end{equation}
	Here, we use the fact that $\tensor{f}{_{A}} \tensor{\overline{f}}{^{A}} = - \lambda$,
	which follows from \cref{eq:matrix-representation}.
	Similarly,
	\begin{align}
		\bm{\Delta} \bm{C}
		&= \tensor{\bm{\nabla}}{_{B}} (\tensor{f}{_{A}}
		\tensor{\bm{\nabla}}{^{B}} \tensor{\bm{\nabla}}{^{A}}) \\
		&= \tensor{f}{_{A}_{B}} \tensor{\bm{\nabla}}{^{A}} \tensor{\bm{\nabla}}{^{B}}
		+ \tensor{f}{_{A}} \tensor{\bm{\nabla}}{_{B}} \tensor{\bm{\nabla}}{^{B}} \tensor{\bm{\nabla}}{^{A}} \\
		&= \tensor{f}{_{A}_{B}} \tensor{\bm{\nabla}}{^{A}} \tensor{\bm{\nabla}}{^{B}}
		+ \bm{C} \bm{\Delta};
	\end{align}
	the last equality holds since $\bm{g}$ is Ricci-flat.
	Thus, it is sufficient to show that $\tensor{f}{_{A}_{B}} = 0$.
	From definition,
	\begin{equation}
		\tensor{f}{_{A}_{B}} = Z_{B} Z_{A} f - (\bm{\nabla}_{Z_{B}} Z_{A}) f,
	\end{equation}
	and
	\begin{equation}
		\bm{g}(\bm{\nabla}_{Z_{B}}Z_{A}, Z_{\overline{C}})
		= Z_{B} (\bm{g}(Z_{A}, Z_{\overline{C}}))
		- \bm{g}(Z_{A}, [Z_{B}, Z_{\overline{C}}]).
	\end{equation}
	Since $f = z^{0}$,
	we need only to consider the $Z_{0}$-component of $\bm{\nabla}_{Z_{B}} Z_{A}$.
	Thus it is enough to compute the value
	$\bm{g}(\bm{\nabla}_{Z_{B}}Z_{A}, Z_{\overline{C}})$ for $C = 0$ or $n + 1$
	from the matrix representation of $\bm{g}$.
	In this case,
	\begin{equation}
		\bm{g}(\bm{\nabla}_{Z_{B}}Z_{A}, Z_{\overline{C}}) = Z_{B} (\bm{g}(Z_{A}, Z_{\overline{C}})),
	\end{equation}
	 since the $(0,1)$-vector fields $Z_{\overline{0}}$ and $Z_{\overline{n + 1}}$ are anti-holomorphic.
	 Under these observations,
	 a direct calculation shows $Z_{B} Z_{A} f = (\bm{\nabla}_{Z_{B}} Z_{A}) f$.
\end{proof}

The following lemma is a consequence of the above commutation relations.

\begin{lemma}
	\begin{equation}
		\bm{M}_{-1, 0} \bm{\Delta}^{k} \bm{M}_{1, 0}
		= \bm{\Delta}^{k} + k \bm{M}_{-1, 0} \bm{\Delta}^{k-1} \bm{C}.
	\end{equation}
\end{lemma}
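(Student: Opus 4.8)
The plan is to first prove the auxiliary commutator identity $[\bm{\Delta}^{k}, \bm{M}_{1,0}] = k \bm{\Delta}^{k-1} \bm{C}$ by induction on $k$, and then conjugate by the homogeneity‑shifting operators $\bm{M}_{\pm 1, 0}$. The base case $k = 1$ is nothing but the definition $\bm{C} = [\bm{\Delta}, \bm{M}_{1,0}]$. For the inductive step I would apply the Leibniz rule for commutators,
\begin{equation}
	[\bm{\Delta}^{k}, \bm{M}_{1,0}] = \bm{\Delta} \, [\bm{\Delta}^{k-1}, \bm{M}_{1,0}] + [\bm{\Delta}, \bm{M}_{1,0}] \, \bm{\Delta}^{k-1},
\end{equation}
use the inductive hypothesis on the first term and the definition of $\bm{C}$ on the second, and then invoke the relation $[\bm{\Delta}, \bm{C}] = 0$ from the preceding lemma to push $\bm{C}$ through $\bm{\Delta}^{k-1}$. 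The two contributions $(k-1) \bm{\Delta}^{k-1} \bm{C}$ and $\bm{\Delta}^{k-1} \bm{C}$ then add to $k \bm{\Delta}^{k-1} \bm{C}$, closing the induction.

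To finish, I would left‑multiply by $\bm{M}_{-1, 0}$. From the definition of $\bm{M}_{v, v'}$ as multiplication by $(z^{0})^{v} (\bar z^{0})^{v'}$ one has $\bm{M}_{v,0} \bm{M}_{v',0} = \bm{M}_{v+v',0}$ and $\bm{M}_{0,0} = \Id$, so $\bm{M}_{-1,0} \bm{M}_{1,0} = \Id$ and hence
\begin{equation}
	\bm{M}_{-1, 0} \bm{\Delta}^{k} \bm{M}_{1, 0} - \bm{\Delta}^{k} = \bm{M}_{-1, 0} \, [\bm{\Delta}^{k}, \bm{M}_{1, 0}] = k \, \bm{M}_{-1, 0} \bm{\Delta}^{k-1} \bm{C},
\end{equation}
which is exactly the asserted identity.

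The argument is purely formal once the three commutation relations of the previous lemma are in hand, so there is no genuine obstacle; the only thing to watch is the bookkeeping of the operators $\bm{M}_{v,0}$, and in particular the fact that $\bm{C}$ commutes with every power of $\bm{\Delta}$, which ultimately rests on the Ricci‑flatness of the ambient metric $\bm{g}[\bm{\rho}_{S}]$ used in the proof of that lemma.
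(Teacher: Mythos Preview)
Your proof is correct and follows essentially the same route as the paper. The paper establishes $[\bm{\Delta}^{k},\bm{M}_{1,0}] = k\,\bm{\Delta}^{k-1}\bm{C}$ by writing the commutator as the telescoping sum $\sum_{j=0}^{k-1}\bm{\Delta}^{k-j-1}[\bm{\Delta},\bm{M}_{1,0}]\bm{\Delta}^{j}$ and then using $[\bm{\Delta},\bm{C}]=0$ to make all $k$ summands equal, whereas you package the same computation as an induction via the Leibniz rule; both arguments then left-multiply by $\bm{M}_{-1,0}$.
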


\begin{proof}
	\begin{align}
		\bm{M}_{-1, 0} \bm{\Delta}^{k} \bm{M}_{1, 0}
		&= \bm{\Delta}^{k} + \sum_{j=0}^{k-1} \bm{M}_{-1, 0} \bm{\Delta}^{k-j-1}
		[\bm{\Delta}, \bm{M}_{1, 0}] \bm{\Delta}^{j} \\
		&= \bm{\Delta}^{k}
		+ \sum_{j=0}^{k-1} \bm{M}_{-1, 0} \bm{\Delta}^{k-j-1} \bm{C} \bm{\Delta}^{j} \\
		&= \bm{\Delta}^{k} + k \bm{M}_{-1, 0} \bm{\Delta}^{k-1} \bm{C}.
	\end{align}
	This proves the statement.
\end{proof}

\begin{proof}[Proof of \cref{prop:power-of-Laplacian}]
	We prove \cref{eq:power-of-Laplacian1,eq:power-of-Laplacian2}
	by induction in $k$.
	First we prove \cref{eq:power-of-Laplacian1}.
	The case $k=1$ is trivial.
	Assume that the formula holds for $k$.
	Then,
	\begin{align}
		&\bm{M}_{-k-2, 0} (\bm{M}_{2, 0} \bm{\Delta})^{k+1} \bm{M}_{-k, 0} \\
		&= \bm{M}_{-1, 0} [\bm{M}_{-k-1, 0} (\bm{M}_{2, 0} \bm{\Delta})^{k} \bm{M}_{-k+1, 0}]
		[\bm{M}_{k+1, 0} \bm{\Delta} \bm{M}_{-k, 0}] \\
		&= \bm{M}_{-1, 0} \bm{\Delta}^{k} (\bm{M}_{1, 0} \bm{\Delta} - k \bm{C}) \\
		&= (\bm{\Delta}^{k} + k \bm{M}_{-1, 0} \bm{\Delta}^{k-1} \bm{C}) \bm{\Delta}
		- k \bm{M}_{-1, 0} \bm{\Delta}^{k} \bm{C} \\
		&= \bm{\Delta}^{k+1}.
	\end{align}
	This proves the formula for $k+1$.
	Next, consider \cref{eq:power-of-Laplacian2}.
	If $k=2$,
	\begin{align}
		\bm{M}_{-1, -1} \bm{M}_{0, 1} \bm{\Delta}
		\bm{M}_{2, 0} \bm{\Delta} \bm{M}_{-1, 0}
		&= \bm{M}_{-1, 0} \bm{\Delta} \bm{M}_{1, 0} \bm{\Delta}
		- \bm{M}_{-1, 0} \bm{\Delta} \bm{C} \\
		&= \bm{\Delta}^{2} + \bm{M}_{-1, 0} \bm{C} \bm{\Delta} - \bm{M}_{-1, 0} \bm{\Delta} \bm{C} \\
		&= \bm{\Delta}^{2}.
	\end{align}
	Assume that the formula holds for $k$.
	Then,
	\begin{align}
		& \bm{M}_{-1, -1} \bm{\Delta}^{k-1} \bm{M}_{0, k} \bm{\Delta}
		\bm{M}_{k+1, -k+1} \bm{\Delta} \bm{M}_{-k, 0} \\
		&= \bm{M}_{-1, -1} \bm{\Delta}^{k-2} [\bm{\Delta}, \bm{M}_{0, k-1}] \bm{M}_{0,1} \bm{\Delta}
		\bm{M}_{k+1, -k+1} \bm{\Delta} \bm{M}_{-k, 0} \\
		&\quad + \bm{M}_{-1, -1} \bm{\Delta}^{k-2} \bm{M}_{0, k-1} \bm{\Delta} \bm{M}_{0, 1}
		[\bm{\Delta}, \bm{M}_{0, -k+1}] \bm{M}_{k + 1, 0} \bm{\Delta} \bm{M}_{-k, 0} \\
		&\quad + \bm{M}_{-1, -1} \bm{\Delta}^{k-2} \bm{M}_{0, k-1} \bm{\Delta} \bm{M}_{0, -k+2}
		[\bm{\Delta}, \bm{M}_{k, 0}] \bm{M}_{1, 0} \bm{\Delta} \bm{M}_{-k, 0} \\
		&\quad + \bm{M}_{-1, -1} \bm{\Delta}^{k-2} \bm{M}_{0, k-1}
		\bm{\Delta} \bm{M}_{k, -k+2} \bm{\Delta} \bm{M}_{1, 0} [\bm{\Delta}, \bm{M}_{-k, 0}] \\
		&\quad + \bm{M}_{-1, -1} \bm{\Delta}^{k-2} \bm{M}_{0, k-1}
		\bm{\Delta} \bm{M}_{k, -k+2} \bm{\Delta} \bm{M}_{-k+1,0} \bm{\Delta} \\
		&= \bm{\Delta}^{k+1}.
	\end{align}
	Note that
	in the last equality,
	the first and the second term, and the third and the fourth term
	cancel respectively. 
	This proves \cref{eq:power-of-Laplacian2} for $k+1$.
\end{proof}

\begin{proof}[Proof of \cref{thm:formula-for-CR-invariant-powers-of-sub-Laplacian}]
	What is left is to prove \cref{eq:L-operators}.
	Since $\bm{\Delta} = - \tensor{\bm{g}}{^{A}^{\overline{B}}} \tensor{\bm{\nabla}}{_{A}} \tensor{\bm{\nabla}}{_{\overline{B}}}$,
	we need only to consider $\partial \overline{\partial} \widetilde{\bm{f}} (Z_{A}, Z_{\overline{B}})$
	for $\widetilde{\bm{f}} \in \widetilde{\mathscr{E}}(w, w')$
	and $(A, \overline{B})$ with $\tensor{\bm{g}}{^{A}^{\overline{B}}} \neq 0$.
	Since $Z_{0}$ and $Z_{n + 1}$ are holomorphic vector fields,
	\begin{align}
		\partial \overline{\partial} \widetilde{\bm{f}} (Z_{0}, Z_{\overline{0}})
		&= w w' \widetilde{\bm{f}} ,
		& \partial \overline{\partial} \widetilde{\bm{f}} (Z_{0}, Z_{\overline{n + 1}})
		&= w Z_{\overline{n + 1}} \widetilde{\bm{f}} ,\\
		\partial \overline{\partial} \widetilde{\bm{f}} (Z_{n + 1}, Z_{\overline{0}})
		&= w' Z_{n + 1} \widetilde{\bm{f}} ,
		&\partial \overline{\partial} \widetilde{\bm{f}} (Z_{n + 1}, Z_{\overline{n + 1}})
		&= \frac{1}{2} (Z_{n + 1} Z_{\overline{n + 1}} + Z_{\overline{n + 1}} Z_{n + 1}) \widetilde{\bm{f}} .
	\end{align}
	On the other hand,
	the commutator $[Z_{\alpha}, Z_{\overline{\beta}}]$ is equal to
	\begin{equation}
		\nabla_{Z_{\alpha}} Z_{\overline{\beta}} - \nabla_{Z_{\overline{\beta}}} Z_{\alpha}
		+ \tensor{l}{_{\alpha}_{\overline{\beta}}} (Z_{n + 1} - Z_{\overline{n + 1}}).
	\end{equation}
	Thus we have
	\begin{align}
		\partial \overline{\partial} \widetilde{\bm{f}} (Z_{\alpha}, Z_{\overline{\beta}})
		&= Z_{\alpha} Z_{\overline{\beta}} \widetilde{\bm{f}}
		- \overline{\partial} \widetilde{\bm{f}}([Z_{\alpha}, Z_{\overline{\beta}}]) \\
		&= \frac{1}{2} (Z_{\alpha} Z_{\overline{\beta}} + Z_{\overline{\beta}} Z_{\alpha}) \widetilde{\bm{f}}
		+ \frac{1}{2} \partial \widetilde{\bm{f}}([Z_{\alpha}, Z_{\overline{\beta}}])
		- \frac{1}{2} \overline{\partial} \widetilde{\bm{f}}([Z_{\alpha}, Z_{\overline{\beta}}]) \\
		&= \frac{1}{2}(Z_{\alpha} Z_{\overline{\beta}} - \nabla_{Z_{\alpha}} Z_{\overline{\beta}}
		+ Z_{\overline{\beta}} Z_{\alpha} - \nabla_{Z_{\overline{\beta}}} Z_{\alpha}) \widetilde{\bm{f}}
		+ \frac{1}{2} \tensor{l}{_{\alpha}_{\overline{\beta}}} (Z_{n + 1} + Z_{\overline{n + 1}}) \widetilde{\bm{f}}.
	\end{align}
	Hence for $\widetilde{f} \in C^{\infty}(X)$,
	\begin{equation} \label{eq:ambient-laplacian}
		\begin{split}
			&(\bm{M}_{1 - w, 1- w'} \circ \bm{\Delta} \circ \bm{M}_{w, w'} \widetilde{f})|_{\mathcal{S}} \\
			&= \left(\frac{1}{2} \Delta_{b} + \lambda w w' \right) \widetilde{f} |_{S}
			+ \left(- \frac{n}{2} -w'\right) (Z_{n + 1} \widetilde{f}) |_{S} \\
			& \quad + \left(- \frac{n}{2} - w\right) (Z_{\overline{n + 1}} \widetilde{f}) |_{S}.
		\end{split}
	\end{equation}
	In particular,
	taking $(w, w') = ((- \mu - n) / 2, (\mu - n) / 2)$,
	we have
	\begin{align}
		\bm{M}_{1 - w, 1- w'} \circ \bm{L}_{\mu} \circ \bm{M}_{w, w'}
		&= \frac{1}{2} \Delta_{b} + \frac{\sqrt{-1}}{2}\mu \xi
		+ \frac{1}{4} \lambda (n -\mu)(n + \mu) \\
		&= L_{\mu}.
	\end{align}
	This proves the theorem.
\end{proof}

\begin{proof}[Proof of \cref{lem:boundary-value-of-intermediate-operator,thm:formula-for-P-prime-operator}]
	It suffices to show \cref{lem:boundary-value-of-intermediate-operator,eq:I-operators}.
	First, note that $Z_{n + 1} Z_{\overline{n + 1}} \widetilde{\Upsilon}$ and $\bm{\Delta} \widetilde{\Upsilon}$
	vanish to infinite order at the boundary.
	In particular, \cref{eq:ambient-laplacian} for $(w, w') = (0, 0)$ gives that
	\begin{equation}
		[(Z_{n + 1} + Z_{\overline{n + 1}}) \widetilde{\Upsilon}]|_{S} = n^{-1} \Delta_{b} \Upsilon.
	\end{equation}
	In the following, we compute modulo functions that vanish to infinite order at the boundary.
	\begin{align}
		&\bm{\Delta} \bm{M}_{- n, 0} (\widetilde{\Upsilon} \log |z^{0}|^{2}) \\
		&= - \langle d \widetilde{\Upsilon}, d ((z^{0})^{- n} \log |z^{0}|^{2}) \rangle_{\bm{g}}
		+ \widetilde{\Upsilon} \bm{\Delta} ((z^{0})^{- n} \log |z^{0}|^{2}) \\
		&= \bm{M}_{- n -1, -1} [n (Z_{\overline{n + 1}} \widetilde{\Upsilon}) \log |z^{0}|^{2}
		-(Z_{n + 1} + Z_{\overline{n + 1}}) \widetilde{\Upsilon} - n \lambda \widetilde{\Upsilon}].
	\end{align}
	Here $\langle \cdot, \cdot \rangle_{\bm{g}}$ is the inner product on $T^{*} \mathcal{X}$
	induced from $\bm{g}$.
	Hence
	\begin{align}
		&\bm{M}_{0, n} \bm{\Delta} \bm{M}_{n + 1, - n + 1}
		\bm{\Delta} \bm{M}_{- n, 0} (\widetilde{\Upsilon} \log |z^{0}|^{2}) \\
		&= \bm{M}_{0, n} \bm{\Delta} \bm{M}_{0, -n}
		[n (Z_{\overline{n + 1}} \widetilde{\Upsilon}) \log |z^{0}|^{2}
		-(Z_{n + 1} + Z_{\overline{n + 1}}) \widetilde{\Upsilon} - n \lambda \widetilde{\Upsilon}] \\
		&= \bm{M}_{-1, -1} [- n (Z_{n + 1}^{2} + Z_{\overline{n + 1}}^{2})\widetilde{\Upsilon}
		- n^{2} \lambda (Z_{n + 1} + Z_{\overline{n + 1}})\widetilde{\Upsilon}] \\
		&= \bm{M}_{-1, -1} [- n (Z_{n + 1} - Z_{\overline{n + 1}})^{2} \widetilde{\Upsilon}
		- n^{2} \lambda (Z_{n + 1} + Z_{\overline{n + 1}})\widetilde{\Upsilon}] \\
		&= \bm{M}_{-1, -1} [n \xi^{2} \widetilde{\Upsilon}
		- n^{2} \lambda (Z_{n + 1} + Z_{\overline{n + 1}})\widetilde{\Upsilon}],
	\end{align}
	which is an element of $\widetilde{\mathscr{E}}(-1)$.
	Moreover,
	on $\mathcal{S}$,
	\begin{equation}
		(\bm{I} \widetilde{\Upsilon})|_{\mathcal{S}}
		= - n^{-1} |z^{0}|^{-2} (\Delta_{b}^{2} + n^{2} \lambda \Delta_{b}) \Upsilon.
	\end{equation}
	Here we use the fact that $\Delta_{b}^{2} + n^{2} \xi^{2}$ annihilates CR pluriharmonic functions 
	on Sasakian manifolds; 
	see~\cite[Proposition 3.2]{Graham-Lee88}.
\end{proof}

\section{Variation of total \texorpdfstring{$Q$}{Q}-prime curvature}
\label{section:variation-of-total-Q-prime-curvature}

In this section,
we consider the first and the second variation of the total $Q$-prime curvature.

We recall a variational formula for the total $Q$-prime curvature
under deformations of real hypersurfaces.
Let $(M_{t})_{t \in (-1, 1)}$ be
a smooth family of closed strictly pseudoconvex real hypersurfaces
in a complex manifold $X$.
Take a Fefferman defining function $\bm{\rho}_{t}$ of $\mathcal{M}_{t} = \pi_{\mathcal{X}}^{-1} (M_{t})$
such that it is smooth in the parameter $t \in (-1, 1)$.
Assume that there exists a flat Hermitian metric $\bm{h}$ of $K_{X}$ near $M = M_{0}$.
In this setting, the function $\rho_{t} = \bm{\rho}_{t} \cdot \bm{h}^{-1/(n + 2)}$ is a defining function of $M_{t}$,
and the corresponding contact form $\theta_{t} = d^{c} \rho_{t}|_{M_{t}}$ is pseudo-Einstein.

\begin{theorem}[{\cite[Theorem 1.2]{Hirachi-Marugame-Matsumoto17}}]
	Under the above assumptions, the total $Q$-prime curvature satisfies 
	\begin{equation} \label{eq:first-variation}
		\left. \frac{d}{dt} \right|_{t=0} \overline{Q}'(M_{t})
		= c_{n} \int_{M} \bm{\varphi} \bm{\mathcal{O}},
	\end{equation}
	where $\bm{\mathcal{O}}$ is the obstruction function of $M$,
	$\bm{\varphi} = (d/dt)|_{t = 0} \bm{\rho}_{t}|_{\mathcal{M}} \in \mathscr{E}(1)$,
	and $c_{n} = 2 n! (n + 2)!$.
	Moreover if the obstruction function of $M$ vanishes,
	then 
	\begin{equation} \label{eq:second-variation}
		\left. \frac{d^{2}}{dt^{2}} \right|_{t=0} \overline{Q}'(M_{t})
		= c'_{n} \int_{M} \bm{\varphi} (\bm{R} \bm{\varphi}),
	\end{equation}
	where $\bm{R}$ is as in \cref{lem:super-critical-GJMS-operator} and $c'_{n} = -2 ((n + 1)(n + 2))^{-1}$.
\end{theorem}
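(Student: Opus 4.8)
The plan is to reconstruct the argument of Hirachi-Marugame-Matsumoto by reducing both variations to Green-type identities in the Lorentz-K\"{a}hler ambient space, exploiting the one decisive simplification available here: the flat Hermitian metric $\bm{h}$ is held fixed along the family. Since $\rho_{t} = \bm{\rho}_{t}\cdot\bm{h}^{-1/(n+2)}$, one has $\bm{h}_{\rho_{t}} = \bm{h}$ for every $t$, so the potential $\log\bm{h}_{\rho_{t}} = \log\bm{h}$ that enters \cref{def:Q-prime-curvature} does not depend on $t$; all of the $t$-dependence of $\bm{Q}'_{\theta_{t}} = (n+2)^{-2}[\bm{\Delta}_{t}^{n+1}(\log\bm{h})^{2}]|_{\mathcal{M}_{t}}$ therefore sits in the ambient metric $\bm{g}[\bm{\rho}_{t}]$ (hence in the Laplacian $\bm{\Delta}_{t}$), in the hypersurface $\mathcal{M}_{t}$, and in the measure $d^{c}\bm{\rho}_{t}\wedge(dd^{c}\bm{\rho}_{t})^{n}$ used to integrate the resulting section of $\mathscr{E}(-n-1)$. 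So first I would express $\overline{Q}'(M_{t}) = \int_{M_{t}}\bm{Q}'_{\theta_{t}}$ as an ambient integral, unfolding the density pairing $\int_{M}(\cdot)$ into an integral against $d^{c}\bm{\rho}_{t}\wedge(dd^{c}\bm{\rho}_{t})^{n}$, and, using the Green-type identities that underlie the self-adjointness of the boundary operators $\bm{P}_{w,w'}$, bring it to a form in which the $t$-derivative can be taken term by term (an essentially equivalent alternative starting point is a renormalized-volume representation of $\overline{Q}'$).

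The heart of the matter is the first variation. Differentiating at $t=0$ produces contributions from the variation $\dot{\bm{g}} = dd^{c}\dot{\bm{\rho}}_{0}$ of the ambient metric inside $\bm{\Delta}_{0}^{n+1}$, from the motion of the domain $\mathcal{M}_{t}$, and from the variation of the measure $d^{c}\bm{\rho}_{t}\wedge(dd^{c}\bm{\rho}_{t})^{n}$; here $\dot{\bm{\rho}}_{0} = (d/dt)|_{t=0}\bm{\rho}_{t}$ and $\bm{\varphi} = \dot{\bm{\rho}}_{0}|_{\mathcal{M}}$. The key input is that $\bm{\rho}_{0}$ is a Fefferman defining function: by \cref{prop:Fefferman-defining-function} it solves the Monge-Amp\`{e}re equation $(dd^{c}\bm{\rho})^{n+2} = k_{n}(1+\bm{\mathcal{O}}\bm{\rho}^{n+2})\vol_{\mathcal{X}}$ with $k_{n} = -(n+1)!/(n+2)$, so the ambient metric is Ricci-flat to high order and $\bm{\mathcal{O}}$ controls precisely the leading error. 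After repeated integration by parts and commuting $\bm{\Delta}_{0}$ past $\dot{\bm{g}}$, all bulk contributions either cancel among the three sources above or are annihilated by the approximate Ricci-flatness; because $\bm{\Delta}_{0}^{n+1}$ sees exactly the order at which Ricci-flatness breaks down, the only surviving term is a boundary pairing at $\mathcal{M}$, which after inserting the Monge-Amp\`{e}re normalization equals $c_{n}\int_{M}\bm{\varphi}\bm{\mathcal{O}}$. The constant $c_{n} = 2n!(n+2)!$ is assembled from the factor $2$ produced by differentiating the square, the normalization $(n+2)^{-2}$, and the combinatorial constants produced in the reduction.

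For the second variation, assume $\bm{\mathcal{O}} = 0$ along $\mathcal{M}$ and apply the first-variation formula at every time $t$, re-basing the family at $M_{t}$: $(d/dt)\overline{Q}'(M_{t}) = c_{n}\int_{M_{t}}\bm{\varphi}_{t}\bm{\mathcal{O}}_{t}$, where $\bm{\varphi}_{t}\in\mathscr{E}(1)$ is the velocity and $\bm{\mathcal{O}}_{t}\in\mathscr{E}(-n-2)$ the obstruction function of $M_{t}$. Differentiating once more at $t=0$, the product rule gives terms with $\dot{\bm{\varphi}}_{0}$, with the variation of the domain, and with the variation of the measure, but each of these is multiplied by $\bm{\mathcal{O}}_{0} = 0$; the only survivor is $c_{n}\int_{M}\bm{\varphi}\,\dot{\bm{\mathcal{O}}}_{0}$, where $\dot{\bm{\mathcal{O}}}_{0}$ is the linearization of the obstruction function along the family. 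This linearization is a differential operator carrying $\bm{\varphi}\in\mathscr{E}(1)$ into $\mathscr{E}(-n-2)$, invariant under change of Fefferman defining function, so by the characterization in \cref{lem:super-critical-GJMS-operator} it must be a universal constant multiple of $\bm{R}\bm{\varphi}$ (the operator that agrees with $\bm{P}_{1,1}$ precisely when $\bm{\mathcal{O}}$ vanishes along $\mathcal{M}$). Matching leading-order coefficients pins down that constant and yields $(d^{2}/dt^{2})|_{t=0}\overline{Q}'(M_{t}) = c'_{n}\int_{M}\bm{\varphi}(\bm{R}\bm{\varphi})$ with $c'_{n} = -2((n+1)(n+2))^{-1}$.

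I expect the first-variation bookkeeping to be the main obstacle. One has to actually carry out the iterated integration by parts for $\bm{\Delta}_{0}^{n+1}$ acting on the log-squared density $(\log\bm{h})^{2}$ in a Lorentzian ambient metric, keep honest track of every boundary term at $\mathcal{M}$, cope with the fact that $\log\bm{h}$ is not homogeneous of any single weight, and verify that the one noncancelling contribution is the obstruction term with exactly the stated constant --- which works precisely because the ambient metric is only finite-order Ricci-flat, so the curvature corrections enter at the order where $\bm{\mathcal{O}}$ lives. By comparison, the second-variation step is soft once the first variation is available: it rests only on the product rule, the vanishing of $\bm{\mathcal{O}}$, and the rigidity of Fefferman-invariant operators $\mathscr{E}(1)\to\mathscr{E}(-n-2)$ recorded in \cref{lem:super-critical-GJMS-operator}.
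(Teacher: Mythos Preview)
The paper does not prove this theorem at all: it is quoted verbatim from \cite[Theorem 1.2]{Hirachi-Marugame-Matsumoto17} and used as a black box. There is therefore no ``paper's own proof'' to compare your attempt against. Everything you wrote is an attempt to reconstruct the argument of the cited reference, not of the paper under review.

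That said, two remarks on your sketch. Your first-variation outline is broadly in the spirit of the original argument (which actually proceeds via the log-term of a renormalized volume expansion rather than by directly differentiating $\bm{\Delta}_{t}^{n+1}(\log\bm{h})^{2}$; you mention this alternative yourself). For the second variation, however, your identification of $\dot{\bm{\mathcal{O}}}_{0}$ with a constant multiple of $\bm{R}\bm{\varphi}$ by ``rigidity'' is not justified: \cref{lem:super-critical-GJMS-operator} only \emph{defines} $\bm{R}$ as a specific ambient expression and records that it is independent of the choice of Fefferman defining function; it does not assert that every CR-invariant operator $\mathscr{E}(1)\to\mathscr{E}(-n-2)$ is a multiple of $\bm{R}$, and in fact there is no reason this should be true. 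In the cited paper the link between $\dot{\bm{\mathcal{O}}}$ and $\bm{R}$ is obtained by an explicit ambient computation, not by an abstract uniqueness principle.
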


In the following, 
we consider a closed Sasakian $\eta$-Einstein manifold $S$
and a smooth family $(M_{t})_{t \in (-1, 1)}$ of closed real hypersurfaces in $X = C(S)$ such that $M_{0} = S$.

The first variation of the total $Q$-prime curvature
can be computed from \cref{eq:first-variation}.

\begin{proof}[Proof of \cref{prop:first-variation-for-SeE}]
	Since $K_{X}$ has a flat Hermitian metric $\bm{h}_{S}$,
	we can apply \cref{eq:first-variation}.
	Thus \cref{prop:first-variation-for-SeE} follows from the vanishing of the obstruction function.
\end{proof}

Next,
consider the second variation of the total $Q$-prime curvature.
Take a Fefferman defining function $\bm{\rho}_{t}$ of $\mathcal{M}_{t}$
that is smooth in $t$ and coincides with $\bm{\rho}_{S}$
constructed in \cref{prop:Fefferman-defining-function-of-SeE}
at $t = 0$.
Then the second variation of $\overline{Q}'(M_{t})$ satisfies
\begin{equation}
	\left. \frac{d^{2}}{dt^{2}} \right|_{t=0} \overline{Q}'(M_{t})
	= c'_{n} \int_{M} \varphi (P_{1, 1} \varphi) \, \eta \wedge (d\eta)^{n},
\end{equation}
where $\varphi = (d/dt)|_{t = 0} (\bm{\rho}_{t} \cdot \bm{h}_{S}^{-1/(n + 2)})|_{\mathcal{S}} \in C^{\infty}(S)$.
Note that the constant $c'_{n}$ is negative.
Hence it is enough to study spectral properties of $P_{1, 1}$
for the proofs of properties of the second variation.

Before studying $P_{1, 1}$,
we consider a relation between $D_{\eta}$ introduced in \cref{subsection:deformation-of-CR-structures}
and $L_{\mu}$'s.

\begin{lemma} \label{lem:kernel-of-edge-operator}
	The operator $4 D_{\eta}^{*} D_{\eta}$
	coincides with $L_{n + 2} L_{n}$.
	In particular, the operator $L_{n + 2} L_{n}$ is a non-negative operator
	and its kernel coincides with $\ker D_{\eta}$ if $S$ is closed.
	Similarly, the operator $L_{- n - 2} L_{- n}$ is non-negative
	and its kernel is equal to that of $\overline{D}_{\eta}$.
\end{lemma}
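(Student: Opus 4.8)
\emph{The plan} is to compute $4 D_{\eta}^{*} D_{\eta}$ by hand as a fourth-order scalar operator, match the result with $L_{n+2} L_{n}$, and then read off non-negativity and the kernel from the resulting self-adjoint factorisation. First I would record the two algebraic identifications that make the right-hand side transparent. Writing $\Box_{b} = \tfrac12(\Delta_{b} + \sqrt{-1}\, n \xi)$ and $\overline{\Box}_{b} = \tfrac12(\Delta_{b} - \sqrt{-1}\, n \xi)$, and noting that the zeroth-order term of $L_{\mu}$ vanishes at $\mu = n$ because $(n - \mu)(n + \mu) = 0$ there, we get
\begin{equation}
	L_{n} = \tfrac12 \Delta_{b} + \tfrac{\sqrt{-1}}{2} n \xi = \Box_{b}, \qquad
	L_{n+2} = \Box_{b} + \sqrt{-1}\, \xi - (n+1)\lambda ,
\end{equation}
and likewise $L_{-n} = \overline{\Box}_{b}$ and $L_{-n-2} = \overline{\Box}_{b} - \sqrt{-1}\, \xi - (n+1)\lambda$. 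Since $(S, T^{1,0} S, \eta)$ is Sasakian the Tanaka--Webster torsion vanishes, so $2\tensor{(D_{\eta} F)}{_{\overline{\alpha}} ^{\beta}} = \tensor{F}{_{\overline{\alpha}} ^{\beta}}$; lowering the index with the Levi form, $2\tensor{(D_{\eta} F)}{_{\overline{\alpha}} _{\overline{\beta}}} = \tensor{F}{_{\overline{\alpha}} _{\overline{\beta}}}$, which is symmetric in $\overline{\alpha}, \overline{\beta}$ by the commutation relation $\tensor{f}{_{[} _{\alpha} _{\beta} _{]}} = 0$ (conjugated).

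Next I would pass to the formal adjoint. Computing $D_{\eta}^{*}$ with respect to the Hermitian metric on $\Hom(T^{0,1}S, T^{1,0}S)$ induced by the Levi form and the volume form $\eta \wedge (d\eta)^{n}$ (equivalently, integrating $\langle D_{\eta} F, D_{\eta} G \rangle_{L^{2}}$ by parts twice for $F, G \in C^{\infty}(S)$ on closed $S$), one finds the differential-operator identity
\begin{equation}
	4 D_{\eta}^{*} D_{\eta} F = \tensor{F}{_{\overline{\alpha}} _{\overline{\beta}} ^{\overline{\beta}} ^{\overline{\alpha}}} ,
\end{equation}
i.e.\ the contraction of two holomorphic covariant derivatives against the antiholomorphic Hessian of $F$; the factor $4$ records the $1/2$ in the definition of $D_{\eta}$ together with the pairing conventions, and should be tracked carefully. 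The substantive step is then to commute the two holomorphic derivatives inward past the two antiholomorphic ones and express the result through $\Box_{b}$, $\xi$ and constants. For this I would use the commutators recalled in \cref{subsection:Sasakian-manifolds}: on functions $\tensor{f}{_{\alpha} _{\overline{\beta}}} - \tensor{f}{_{\overline{\beta}} _{\alpha}} = \sqrt{-1}\, \tensor{l}{_{\alpha} _{\overline{\beta}}} f_{0}$, and on $(1,0)$-forms the third-derivative relation $2\tensor{f}{_{\alpha} _{[} _{\beta} _{\overline{\gamma}} _{]}} = \sqrt{-1}\, \tensor{l}{_{\beta} _{\overline{\gamma}}} \tensor{f}{_{\alpha} _{0}} + \tensor{R}{_{\alpha} ^{\delta} _{\beta} _{\overline{\gamma}}} \tensor{f}{_{\delta}}$. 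Every curvature contraction arising here is a Ricci contraction, so the $\eta$-Einstein condition $\tensor{\Ric}{_{\alpha} _{\overline{\beta}}} = (n+1)\lambda \tensor{l}{_{\alpha} _{\overline{\beta}}}$ turns it into a constant times a Levi-form contraction; together with $[\Box_{b}, \xi] = 0$ this should collapse the expression to $(\Box_{b} + \sqrt{-1}\, \xi - (n+1)\lambda)\Box_{b} = L_{n+2} L_{n}$. \emph{The hard part} is precisely this commutator bookkeeping -- tracking the $\xi$-terms and the exact coefficient of the Ricci term -- while the rest is formal.

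Granting $4 D_{\eta}^{*} D_{\eta} = L_{n+2} L_{n}$, the remaining claims are immediate. The operator $L_{n+2} L_{n}$ is then formally self-adjoint and non-negative (being $4 D_{\eta}^{*} D_{\eta}$), and for closed $S$ one has $\langle L_{n+2} L_{n} F, F \rangle_{L^{2}} = 4 \lVert D_{\eta} F \rVert_{L^{2}}^{2}$, so $F \in \ker(L_{n+2} L_{n})$ if and only if $D_{\eta} F = 0$; that is, $\ker(L_{n+2} L_{n}) = \ker D_{\eta}$. For the last assertion I would run the same argument with the conjugate operator $\overline{D}_{\eta}$, for which $2\tensor{(\overline{D}_{\eta} F)}{_{\alpha} ^{\overline{\beta}}} = \tensor{F}{_{\alpha} ^{\overline{\beta}}}$ on a Sasakian manifold; alternatively one simply conjugates the identity $4 D_{\eta}^{*} D_{\eta} = L_{n+2} L_{n}$, using that $\overline{L_{\mu}} = L_{-\mu}$ because $\Delta_{b}$ and $\xi$ are real operators and $(n - \mu)(n + \mu)$ is even in $\mu$. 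Either way one obtains $4 \overline{D}_{\eta}^{*} \overline{D}_{\eta} = L_{-n-2} L_{-n}$, which is therefore non-negative with kernel $\ker \overline{D}_{\eta}$.
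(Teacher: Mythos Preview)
Your proposal is correct and follows essentially the same route as the paper: both compute $4 D_{\eta}^{*} D_{\eta} F$ as the fourth-order scalar $\tensor{F}{_{\overline{\alpha}} ^{\beta} ^{\overline{\alpha}} _{\beta}}$ (equivalently your $\tensor{F}{_{\overline{\alpha}} _{\overline{\beta}}}{}^{\overline{\beta}\overline{\alpha}}$), commute derivatives using \cref{eq:commutator-of-third-derivative} together with the $\eta$-Einstein condition to reduce to $\bigl(\Box_{b} + \sqrt{-1}\,\xi - (n+1)\lambda\bigr)\Box_{b} = L_{n+2} L_{n}$, and then read off non-negativity and the kernel; the conjugate statement is obtained exactly as you suggest via $\overline{L_{\mu}} = L_{-\mu}$. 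The only difference is cosmetic: the paper carries out the three-line commutator calculation explicitly rather than leaving it as the ``hard part''.
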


\begin{proof}
	Since $L_{- n - 2} L_{- n}$ is the complex conjugate of $L_{n + 2} L_{n}$,
	it suffices to prove the lemma for $D_{\eta}$ and $L_{n + 2} L_{n}$.
	Since the Tanaka-Webster torsion for $\eta$ vanishes,
	the operator $D_{\eta}$ is given by
	$2 \tensor{(D_{\eta} F)}{_{\overline{\alpha}} ^{\beta}} = \tensor{F}{_{\overline{\alpha}} ^{\beta}}$.
	Hence $4 D_{\eta}^{*} D_{\eta} F = \tensor{F}{_{\overline{\alpha}} ^{\beta} ^{\overline{\alpha}} _{\beta}}$.
	From \cref{eq:commutator-of-third-derivative},
	\begin{align}
		\tensor{F}{_{\overline{\alpha}} ^{\beta} ^{\overline{\alpha}}}
		&= \tensor{F}{_{\overline{\alpha}} ^{\overline{\alpha}} ^{\beta}}
		- \sqrt{-1} \tensor{F}{^{\beta} _{0}}
		+ \tensor{R}{_{\overline{\alpha}} ^{\overline{\delta}} ^{\beta} ^{\overline{\alpha}}}
		\tensor{F}{_{\overline{\delta}}} \\
		&= \tensor{F}{_{\overline{\alpha}} ^{\overline{\alpha}} ^{\beta}}
		- \sqrt{-1} \tensor{F}{^{\beta} _{0}}
		+ (n + 1) \lambda \tensor{F}{^{\beta}};
	\end{align}
	in the last equality,
	we use the fact that
	\begin{equation}
		\tensor{R}{_{\overline{\alpha}} ^{\overline{\delta}} ^{\beta} ^{\overline{\alpha}}}
		= \tensor{\Ric}{^{\overline{\delta}} ^{\beta}}
		= (n + 1)\lambda \tensor{l}{^{\beta} ^{\overline{\delta}}}.
	\end{equation}
	Thus
	\begin{align}
		\tensor{F}{_{\overline{\alpha}} ^{\beta} ^{\overline{\alpha}} _{\beta}}
		&= \tensor{F}{_{\overline{\alpha}} ^{\overline{\alpha}} ^{\beta} _{\beta}}
		- \sqrt{-1} \tensor{F}{^{\beta} _{0} _{\beta}}
		+ (n + 1) \lambda \tensor{F}{^{\beta} _{\beta}} \\
		&= \Box_{b}^{2} F + \sqrt{-1} \xi \Box_{b} F - (n + 1) \lambda \Box_{b} F \\
		&= L_{n + 2} L_{n} F.
	\end{align}
	In particular if $S$ is closed,
	$L_{n + 2} L_{n}$ is non-negative, and its kernel coincides with $\ker D_{\eta}$.
\end{proof}

We rewrite \cref{lem:kernel-of-edge-operator}
by using the spectral theory of the sub-Laplacian and Reeb vector field.
Since the sub-Laplacian $\Delta_{b}$ is
a non-negative subelliptic self-adjoint operator,
its spectrum $\sigma(\Delta_{b})$ is a discrete subset of $[0, \infty)$,
consists only of eigenvalues,
and the eigenspace $\mathscr{H}_{p}$ with eigenvalue $p \in \sigma(\Delta_{b})$
is a finite-dimensional subspace in $C^{\infty}(S)$.
Note that $\mathscr{H}_{0} = \mathbb{C}$.
Moreover,
the vector field $\sqrt{-1} \xi$ is self-adjoint
and commutes with the sub-Laplacian.
Hence each eigenspace $\mathscr{H}_{p}$
is decomposed into the orthogonal direct sum of $\mathscr{H}_{p, q}$,
where $q$ is a real number and
\begin{equation}
	\mathscr{H}_{p, q} = \{ f \in \mathscr{H}_{p} \mid \sqrt{-1} \xi f = q f \}.
\end{equation}
An important fact is that $p \geq n |q|$ if  $\mathscr{H}_{p, q} \neq 0$.
This is because the operators
$2 \Box_{b} = \Delta_{b} + \sqrt{-1} n \xi$
and $2 \overline{\Box}_{b} = \Delta_{b} - \sqrt{-1} n \xi$
are non-negative operators.
On $\mathscr{H}_{p, q}$,
the operator $L_{\mu}$ coincides with the multiplication by
\begin{equation}
	\frac{1}{2} p + \frac{1}{2} \mu q + \frac{1}{4} \lambda (n - \mu)(n + \mu).
\end{equation}
From this point of view,
\cref{lem:kernel-of-edge-operator} states that
if $\mathscr{H}_{p, q} \neq 0$,
then 
\begin{equation} \label{eq:non-negativity-of-edge-operator1}
	\left(\frac{1}{2}p + \frac{1}{2} n q \right)\left(\frac{1}{2}p + \frac{1}{2} (n + 2) q - (n + 1) \lambda \right) \geq 0
\end{equation}
and
\begin{equation} \label{eq:non-negativity-of-edge-operator2}
	\left(\frac{1}{2}p - \frac{1}{2} n q \right)\left(\frac{1}{2}p - \frac{1}{2} (n + 2) q - (n + 1) \lambda \right) \geq 0,
\end{equation}
and moreover the equality holds if and only if $\mathscr{H}_{p, q}$ is contained in $\ker D_{\eta}$
(resp.\ $\ker \overline{D}_{\eta}$).

Now we return the study of $P_{1, 1}$.
From \cref{thm:formula-for-CR-invariant-powers-of-sub-Laplacian},
$P_{1, 1}$ coincides with the multiplication by
\begin{equation}
	\prod_{j=0}^{n + 2} \left(\frac{1}{2} p + \frac{1}{2} (n + 2 - 2 j) q + \lambda(j - 1)(n + 1 - j) \right)
\end{equation}
on $\mathscr{H}_{p, q}$.
Equations \cref{eq:non-negativity-of-edge-operator1,eq:non-negativity-of-edge-operator2}
give that
the quantity
\begin{equation} \label{eq:non-negativity-of-edge-operator3}
	\prod_{j=0, 1, n + 1, n + 2} \left(\frac{1}{2} p + \frac{1}{2} (n + 2 - 2 j) q + \lambda(j - 1)(n + 1 - j) \right)
\end{equation}
is non-negative for $\mathscr{H}_{p, q} \neq 0$,
and equal to zero
if and only if $\mathscr{H}_{p, q}$ is contained in
$\ker D_{\eta} + \ker \overline{D}_{\eta}$.
Thus if $n \geq 2$,
the sign of the second variation depends on that of
\begin{equation} \label{eq:middle-operator}
	\prod_{j=2}^{n} \left(\frac{1}{2} p + \frac{1}{2} (n + 2 - 2 j) q + \lambda(j - 1)(n + 1 - j) \right).
\end{equation}

\begin{proof}[Proof of \cref{thm:second-variation-for-non-negative-SeE}]
	If $n = 1$,
	the theorem follows from the above argument. 
	In the following, we consider the case $n \geq 2$.
	Let $2 \leq j \leq n$.
	If $\lambda \geq 0$,
	then
	\begin{align}
		\frac{1}{2} p + \frac{1}{2} (n + 2 - 2j) q + \lambda (j - 1)(n + 1 - j)
		&\geq \frac{p}{2} \left(1 - \frac{|n + 2 - 2 j|}{n} \right) \\
		&\geq 0;
	\end{align}
	in the first inequality,
	we use the fact that $p \geq n |q|$.
	Moreover,
	the equality holds if and only if $\lambda = 0$ and $p = q = 0$.
	Therefore,
	the second variation is always non-positive,
	and equal to zero if and only if $\varphi$ is contained in
	$\ker D_{\eta} + \ker \overline{D_{\eta}}$,
	or equivalently, 
	in $\Re \ker D_{\eta}$,
	since $\varphi$ is real-valued.
\end{proof}

Consider the case $n \geq 2$ and $\lambda < 0$.
Let $\Sigma$ be a closed Riemann surface of genus 2,
and $g_{\Sigma}$ a hyperbolic metric on $\Sigma$.
Then there exists a complex structure on $\Sigma$ such that $g_{\Sigma}$ is K\"{a}hler.
Consider the $n$-dimensional complex manifold $Y = \Sigma^{n}$.
The product metric $g_{Y}$ on $Y$ satisfies $\Ric_{g_{Y}} = - g_{Y}$.
Moreover,
its anti-canonical line bundle $K_{Y}^{-1}$
has the Hermitian metric $h$ induced from $g_{Y}$,
and $\omega_{h} = d d^{c} \log h$ coincides with the K\"{a}hler form of $g_{Y}$.
Hence the tube $S = \{ v \in K_{Y}^{-1} \mid h(v, v) = 1\}$
is a Sasakian $\eta$-Einstein manifold with Einstein constant $- 1$ with respect to $\eta = d^{c} \log h$.
Denote by $\pi_{1}$ the composition of the projection $S \to Y$
and the projection from $Y$ to the first factor $\Sigma$.
Let $f \in C^{\infty}(\Sigma)$ be an eigenfunction of $\Delta_{g_{\Sigma}}$ with eigenvalue $p$.
Then $\pi_{1}^{*} f$ is an element of $\mathscr{H}_{p,0}$,
and
\begin{equation}
	L_{n + 2 - 2 j} (\pi_{1}^{*} f) = \left(\frac{1}{2} p - \frac{(j - 1)(n + 1 - j)}{n + 1} \right) \pi_{1}^{*} f.
\end{equation}

\begin{proposition} \label{prop:spectrum}
	For any $0 < p < 2$,
	there exists a hyperbolic metric $g_{\Sigma}$ on $\Sigma$ such that
	the first positive eigenvalue $\lambda_{1}(\Delta_{g_{\Sigma}})$
	of $\Delta_{g_{\Sigma}}$ is equal to $p$.
\end{proposition}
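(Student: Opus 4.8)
The plan is to realize $\lambda_{1}$ as a continuous, strictly positive function on the connected space of hyperbolic metrics on $\Sigma$, to observe that this function takes values arbitrarily close to $0$ as well as values larger than $2$, and then to invoke the intermediate value theorem.

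First I would recall that the hyperbolic metrics on the genus-two surface $\Sigma$ are parametrized, up to diffeomorphisms isotopic to the identity, by the Teichm\"uller space $\mathcal{T}_{2}$, which is connected, being homeomorphic to $\mathbb{R}^{6}$. For each such metric $g$ the surface is connected, so $\ker \Delta_{g}$ is spanned by the constants and the first positive eigenvalue $\lambda_{1}(\Delta_{g})$ is well defined and positive; by the standard perturbation theory of the Laplacian under a smooth variation of the metric, $g \mapsto \lambda_{1}(\Delta_{g})$ is continuous on $\mathcal{T}_{2}$.

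Next I would check that $\inf_{g} \lambda_{1}(\Delta_{g}) = 0$. Take a family of hyperbolic metrics on $\Sigma$ in which a separating simple closed geodesic $\gamma$, splitting $\Sigma$ into two genus-one pieces, is pinched, so that its length tends to $0$. By the collar lemma, $\gamma$ lies in an embedded hyperbolic collar whose width tends to infinity; consider the test function that is equal to a nonzero constant on each of the two components of the complement of the collar, with the two constants chosen so that the function has zero mean, and interpolated linearly in the signed-distance coordinate across the collar. As the length of $\gamma$ shrinks, this function has Dirichlet energy tending to $0$ while its $L^{2}$-norm is bounded below, so its Rayleigh quotient tends to $0$, forcing $\lambda_{1}(\Delta_{g}) \to 0$ along the family (this is the genus-two instance of the low-eigenvalue estimates of Schoen--Wolpert--Yau). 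In particular, for the given $p \in (0, 2)$ there is a hyperbolic metric $g_{-}$ with $\lambda_{1}(\Delta_{g_{-}}) < p$.

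Finally I would exhibit a hyperbolic metric $g_{+}$ on $\Sigma$ with $\lambda_{1}(\Delta_{g_{+}}) > 2$: for instance the Bolza surface, the closed hyperbolic surface of genus two with the largest automorphism group, whose first eigenvalue is known --- and has been confirmed with rigorous error bounds, e.g.\ by Strohmaier--Uski --- to be approximately $3.84$, in particular greater than $2$. Joining $g_{-}$ to $g_{+}$ by a path in $\mathcal{T}_{2}$ and applying the intermediate value theorem to the continuous function $\lambda_{1}$ along this path produces a hyperbolic metric $g_{\Sigma}$ with $\lambda_{1}(\Delta_{g_{\Sigma}}) = p$. The step I expect to be the real obstacle is this last existence input: one needs a genus-two hyperbolic surface whose first eigenvalue is provably larger than $2$, and soft estimates (Cheeger's inequality, or bounds through the systole or the Cheeger constant) are far too weak to reach the value $2$, so one must appeal to the rigorous --- though computer-assisted --- spectral analysis of an explicit highly symmetric surface such as the Bolza surface; any other construction of a genus-two hyperbolic surface with $\lambda_{1} > 2$ would serve equally well.
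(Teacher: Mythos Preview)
Your proposal is correct and follows essentially the same strategy as the paper: continuity of $\lambda_{1}$ on a connected parameter space of hyperbolic metrics, combined with the intermediate value theorem after exhibiting metrics with $\lambda_{1}$ near $0$ (you sketch the pinching/collar argument, the paper simply cites Buser) and with $\lambda_{1}>2$ (you invoke the Bolza surface via Strohmaier--Uski, the paper cites Jenni and Strohmaier--Uski). The only cosmetic difference is that you work on Teichm\"uller space while the paper uses the full space $\mathscr{M}_{-1}$ of hyperbolic metrics, which is immaterial since $\lambda_{1}$ is diffeomorphism-invariant.
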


\begin{proof}
	Consider the space $\mathscr{M}_{-1}$ of hyperbolic metrics on $\Sigma$ with $C^{\infty}$ topology.
	This space is contractible, and in particular connected~\cite[Section 3.4]{Tromba92}.
	Moreover, the map $g_{\Sigma} \mapsto \lambda_{1}(\Delta_{g_{\Sigma}})$
	defines a continuous function on $\mathscr{M}_{-1}$.
	Hence it is sufficient to show that
	\begin{equation}
		\inf_{g_{\Sigma} \in \mathscr{M}_{-1}} \lambda_{1}(\Delta_{g_{\Sigma}}) = 0,
		\qquad \sup_{g_{\Sigma} \in \mathscr{M}_{-1}} \lambda_{1} (\Delta_{g_{\Sigma}}) \geq 2
	\end{equation}
	from the intermediate value theorem.
	The first equality was proved by Buser~\cite[Satz 1]{Buser77}.
	On the other hand,
	it is known that there exists a hyperbolic metric on $\Sigma$
	such that its first eigenvalue is greater than $3.83$~\cite[Section 1.2]{Jenni84};
	see~\cite[Section 5.3]{Strohmaier-Uski13} for a more precise estimate of its value.
	This finishes the proof.
\end{proof}

\begin{proof}[Proof of \cref{thm:second-variation-for-negative-SeE}]
	Since $0 < 2(n - 1)/(n + 1) < 2$,
	we can take a hyperbolic metric $g_{\Sigma}$ on $\Sigma$
	such that $\lambda_{1} (\Delta_{g_{\Sigma}}) = 2(n - 1)/(n + 1)$,
	and a real-valued eigenfunction $0 \neq f$ on $\Sigma$ with eigenvalue $2(n - 1)/(n + 1)$.
	Then $\pi_{1}^{*} f$ is not contained in $\Re \ker D_{\eta}$,
	but $P_{1,1} (\pi_{1}^{*} f) = 0$ since $L_{n - 2} (\pi_{1}^{*} f) = 0$.
	Therefore
	if we take a smooth deformation of $S$ such that $\varphi = f$,
	the second variation of the total $Q$-prime curvature along this deformation is equal to zero
	though this deformation is infinitesimally non-trivial
	as a deformation of CR manifolds.

	Assume that $n = 2m$ for an integer $m \geq 1$.
	Let $0 \neq f \in C^{\infty}(\Sigma)$ be an eigenfunction of $\Delta_{g_{\Sigma}}$ with eigenvalue $p$.
	Then
	\begin{align}
		P_{1, 1}(\pi_{1}^{*} f)
		&= \prod_{j=0}^{2 m + 2} \left(\frac{1}{2} p - \frac{(j - 1)(2m + 1 - j)}{2 m + 1} \right) \pi_{1}^{*} f \\
		&= \left(\frac{1}{2} p - \frac{m^{2}}{2 m + 1} \right)
		\prod_{j=0}^{m} \left(\frac{1}{2} p - \frac{(j - 1)(2 m + 1 - j)}{2 m + 1} \right)^{2} \pi_{1}^{*} f.
	\end{align}
	Hence if we choose $g_{\Sigma}$ and $f$
	such that $p$ is sufficiently small,
	$\pi_{1}^{*} f$ is an eigenfunction of $P_{1, 1}$ with negative eigenvalue.
	Thus there exists a smooth deformation of $S$
	with positive second variation.
\end{proof}

\section{Computation of total \texorpdfstring{$Q$}{Q}-prime curvature}
\label{section:computation-of-total-Q-prime-curvature}

In this section,
we compute the total $Q$-prime curvature for some examples.
To this end,
we first compute the $Q$-prime curvature
for Sasakian $\eta$-Einstein manifolds.
The unbold $Q'_{\eta}$ is defined by
\begin{equation} \label{eq:unbold-Q-prime-curvature}
	Q'_{\eta}
	= \bm{M}_{n + 1, n + 1} \bm{Q}'_{\eta}
	= \bm{Q}'_{\eta} \cdot \bm{h}_{S}^{(n + 1) / (n + 2)} \in C^{\infty}(S).
\end{equation}

\begin{proof}[Proof of \cref{thm:formula-for-Q-prime-curvature}]
	It suffices to compute $\bm{\Delta}^{n + 1} (\log |z^{0}|^{2})$
	for a branched fiber coordinate $z^{0}$ with respect to a flat local coordinate.
	It can be seen that
	\begin{equation}
		\bm{\Delta} (\log |z^{0}|^{2})^{2}
		= - \langle d \log |z^{0}|^{2}, d \log |z^{0}|^{2} \rangle_{\bm{g}}
		= 2 \lambda |z^{0}|^{-2}
	\end{equation}
	and
	\begin{equation}
		\bm{\Delta} |z^{0}|^{-2l}
		= - \langle d (z^{0})^{-l}, d (\bar{z}^{0})^{-l} \rangle_{\bm{g}}
		= l^{2} \lambda |z^{0}|^{-2(l+1)}.
	\end{equation}
	Hence $Q'_{\eta} = 2 (n!)^{2} \lambda^{n + 1}$.
\end{proof}

Let $(S, T^{1,0}S, \eta)$ be a closed Sasakian $\eta$-Einstein manifold
with Einstein constant $(n + 1) \lambda$.
Then the total $Q$-prime curvature $\overline{Q}'(S)$ has the formula
\begin{equation}
	\overline{Q}'(S)
	= 2 (n!)^{2} \lambda^{n + 1} \int_{S} \eta \wedge (d \eta)^{n}
	= 2^{n + 1} (n!)^{3} \lambda^{n + 1} \Vol(S, g_{\eta}),
\end{equation}
where $\Vol(S, g_{\eta})$ is the volume of the Riemannian manifold $(S, g_{\eta})$.
We apply this formula to some examples of Sasakian $\eta$-Einstein manifolds.

\subsection{Sasaki-Einstein manifolds}
\label{subsection:Sasaki-Einstein-manifolds}

If $(S,T^{1,0}S, \eta)$ is a Sasaki-Einstein manifold,
the total $Q$-prime curvature $\overline{Q}'(S)$ is equal to
\begin{equation}
	\overline{Q}'(S)
	= 2^{n + 1} (n!)^{3} \Vol(S, g_{\eta}).
\end{equation}
Hence it is enough to compute the volume of $(S, g_{\eta})$.

\begin{example}[$S^{2 n + 1}$]
	Consider the sphere $(S^{2 n + 1}, T^{1,0} S^{2 n + 1}, \eta_{0})$
	as in \cref{ex:sphere}.
	Then the total $Q$-prime curvature is equal to
	\begin{equation}
		\overline{Q}'(S^{2 n + 1}) = 2^{n + 1} (n !)^{3} \Vol(S^{2 n + 1}, g_{\eta_{0}})
		= 2^{n + 2} (n!)^{2} \pi^{n + 1}.
	\end{equation}
	Here we use the fact that the metric $g_{\eta_{0}}$ is equal to 
	that induced from the Euclidean metric on $\mathbb{C}^{n + 1}$.
\end{example}

\begin{example}[$Y^{p,q}$]
	In the study of the AdS/CFT correspondence,
	Gauntlett-Martelli-Sparks-Waldram
	constructed $5$-dimensional Sasaki-Einstein manifolds $Y^{p, q}$
	for coprime positive integers $q<p$,
	which are diffeomorphic to $S^{2} \times S^{3}$~\cite{Gauntlett-Martelli-Sparks-Waldram04}.
	The total $Q$-prime curvature of $Y^{p, q}$ is given by
	\begin{equation}
		\overline{Q}'(Y^{p, q}) = 2^{3} (2 !)^{3} \Vol(Y^{p, q})
		= \frac{2^{6} q^{2}(2p + (4p^{2} - 3q^{2})^{1/2})}%
		{3p^{2}(3q^{2} - 2p^{2} + p(4p^{2} - 3q^{2})^{1/2})} \pi^{3}.
	\end{equation}
\end{example}

\subsection{Links of affine cones over projective varieties}
\label{subsection:Links-of-affine-cones-over-projective-varieties}

Let $Y$ be an $n$-dimensional smooth projective variety in $\mathbb{CP}^{N}$
and $\Aff(Y) \subset \mathbb{C}^{N+1}$ the affine cone of $Y$,
which may have an isolated singularity at the origin.
Assume that the restriction of the Fubini-Study metric on $Y$ defines a K\"{a}hler-Einstein metric
with Einstein constant $(n + 1) \lambda$.
Then the intersection $S$ of $\Aff(Y)$ and the unit sphere centered at the origin
is a Sasakian $\eta$-Einstein manifold with Einstein constant $(n + 1) \lambda$
with respect to $\eta = \eta_{0}|_{S}$.
In this case,
$S$ is a principal $S^{1}$-bundle over $Y$
and $\eta$ is a connection $1$-form.
Hence the total $Q$-prime curvature $\overline{Q}'(S)$ of $S$ is equal to
\begin{equation}
	\overline{Q}'(S)
	= 2 (n!)^{2} \lambda^{n + 1} \int_{S} \eta \wedge (d \eta)^{n}
	= 4 \pi (n!)^{2} \lambda^{n + 1} \int_{Y} \omega_{FS}^{n}.
\end{equation}
Since the Fubini-Study form $\omega_{FS}$ on $\mathbb{CP}^{N}$
is a representative of the cohomology class $2 \pi c_{1}(\mathcal{O}(1))$,
we obtain
\begin{equation}
	\overline{Q}'(S) = 2^{n + 2} (n!)^{2} \pi^{n + 1} \lambda^{n + 1} \cdot \deg Y,
\end{equation}
where
\begin{equation}
	\deg Y = \int_{Y} c_{1}(\mathcal{O}(1)|_{Y})^{n}.
\end{equation}
Hence it is enough to compute the constant $\lambda$ and the degree $\deg Y$.
Note that $\lambda$ is determined by the formula
$c_{1}(Y) = (n + 1) \lambda \, c_{1}(\mathcal{O}(1)|_{Y})$.

\begin{example}[Fermat hypersurface of degree $2$]
	Let $F \in \mathbb{C}[z^{1}, \dots z^{n + 2}]$ be the homogeneous polynomial defined by
	\begin{equation}
		F(z^{1}, \dots, z^{n + 2}) = (z^{1})^{2} + \dots + (z^{n + 2})^{2}.
	\end{equation}
	The hypersurface $Y \subset \mathbb{CP}^{n + 1}$ defined by $F$
	is called the \emph{Fermat hypersurface of degree $2$}.
	It is known that $\omega_{FS}|_{Y}$ defines
	a K\"{a}hler-Einstein metric on $Y$~\cite[Section 2]{Smyth67}.
	Moreover,
	$\deg Y = 2$
	and $c_{1}(Y) = n c_{1}(\mathcal{O}(1)|_{Y})$.
	Therefore,
	the intersection $S$ of its affine cone and the unit sphere
	has a Sasakian $\eta$-Einstein structure,
	and the total $Q$-prime curvature is given by
	\begin{equation}
		\overline{Q}'(S) = 2^{n + 3} (n!)^{2}  n^{n + 1} (n + 1)^{- (n + 1)} \pi^{n + 1}.
	\end{equation}
\end{example}

\begin{example}[Grassmannian manifold]
	Let $G(k, n)$ be the complex Grassmannian manifold;
	that is, the space of $k$-dimensional $\mathbb{C}$-linear subspaces in $\mathbb{C}^{n}$.
	By the Pl\"{u}cker embedding,
	identify $G(k, n)$ with its image in $\mathbb{CP}^{\binom{n}{k}-1}$.
	Note that its affine cone $\Aff(G(k, n))$ is the affine variety defined by the Pl\"{u}cker relations.
	A direct calculation shows that
	the Fubini-Study metric on $\mathbb{CP}^{\binom{n}{k}-1}$
	induces a K\"{a}hler-Einstein metric on $G(k, n)$ with Einstein constant $n$.
	Hence the intersection $S = \Aff(G(k, n)) \cap S^{2\binom{n}{k} - 1}$
	is a Sasakian $\eta$-Einstein manifold
	with respect to $\eta_{0}|_{S}$.
	The degree of $G(k, n)$ can be computed from so-called Schubert calculus.
	The result is 
	\begin{equation}
		\deg G(k, n) = (k(n-k))! \prod_{i=1}^{k} \frac{(i-1)!}{(n-k+i-1)!};
	\end{equation}
	see for example~\cite[Example 14.7.11]{Fulton98}. 
	Thus, the total $Q$-prime curvature is given by
	\begin{equation}
		\overline{Q}'(S)
		= 2^{k(n-k)+2} ((k(n-k))!)^{2}
		\left(\frac{n \pi}{k(n-k)+1}\right)^{k(n-k)+1}
		\cdot \deg G(k, n).
	\end{equation}
\end{example}

\medskip

\section*{Acknowledgements}
The author is grateful to his supervisor Kengo Hirachi
for various helpful suggestions.
He also thanks Paul Yang and Jeffrey Case for invaluable comments.
A part of this work was carried out during his visit to Princeton University
with the support from The University of Tokyo/Princeton University
Strategic Partnership Teaching and Research Collaboration Grant,
and from the Program for Leading Graduate Schools, MEXT, Japan.
This work was also supported by JSPS Research Fellowship for Young Scientists
and KAKENHI Grant Number 16J04653.

% \bib, bibdiv, biblist are defined by the amsrefs package.
\begin{bibdiv}
\begin{biblist}

\bib{Akahori-Garfield-Lee02}{article}{
      author={Akahori, Takao},
      author={Garfield, Peter~M.},
      author={Lee, John~M.},
       title={Deformation theory of 5-dimensional {CR} structures and the
  {R}umin complex},
        date={2002},
        ISSN={0026-2285},
     journal={Michigan Math. J.},
      volume={50},
      number={3},
       pages={517\ndash 549},
         url={http://dx.doi.org/10.1307/mmj/1039029981},
      review={\MR{1935151}},
}

\bib{Besse87}{book}{
      author={Besse, Arthur~L.},
       title={Einstein manifolds},
      series={Ergebnisse der Mathematik und ihrer Grenzgebiete (3) [Results in
  Mathematics and Related Areas (3)]},
   publisher={Springer-Verlag, Berlin},
        date={1987},
      volume={10},
        ISBN={3-540-15279-2},
         url={https://doi.org/10.1007/978-3-540-74311-8},
      review={\MR{867684}},
}

\bib{Boyer-Galicki08}{book}{
      author={Boyer, Charles~P.},
      author={Galicki, Krzysztof},
       title={Sasakian geometry},
      series={Oxford Mathematical Monographs},
   publisher={Oxford University Press, Oxford},
        date={2008},
        ISBN={978-0-19-856495-9},
      review={\MR{2382957}},
}

\bib{Branson-Fontana-Morpurgo13}{article}{
      author={Branson, Thomas~P.},
      author={Fontana, Luigi},
      author={Morpurgo, Carlo},
       title={Moser-{T}rudinger and {B}eckner-{O}nofri's inequalities on the
  {CR} sphere},
        date={2013},
        ISSN={0003-486X},
     journal={Ann. of Math. (2)},
      volume={177},
      number={1},
       pages={1\ndash 52},
         url={http://dx.doi.org/10.4007/annals.2013.177.1.1},
      review={\MR{2999037}},
}

\bib{Buchweitz-Millson97}{article}{
      author={Buchweitz, Ragnar-Olaf},
      author={Millson, John~J.},
       title={C{R}-geometry and deformations of isolated singularities},
        date={1997},
        ISSN={0065-9266},
     journal={Mem. Amer. Math. Soc.},
      volume={125},
      number={597},
       pages={viii+96},
         url={https://doi.org/10.1090/memo/0597},
      review={\MR{1355712}},
}

\bib{Buser77}{article}{
      author={Buser, Peter},
       title={Riemannsche {F}l\"achen mit {E}igenwerten in {$(0,$} {$1/4)$}},
        date={1977},
        ISSN={0010-2571},
     journal={Comment. Math. Helv.},
      volume={52},
      number={1},
       pages={25\ndash 34},
         url={http://dx.doi.org/10.1007/BF02567355},
      review={\MR{0434961}},
}

\bib{Cap-Slovak-Soucek01}{article}{
      author={\v{C}ap, Andreas},
      author={Slov\'ak, Jan},
      author={Sou\v{c}ek, Vladim\'{\i}r},
       title={Bernstein-{G}elfand-{G}elfand sequences},
        date={2001},
        ISSN={0003-486X},
     journal={Ann. of Math. (2)},
      volume={154},
      number={1},
       pages={97\ndash 113},
         url={https://doi.org/10.2307/3062111},
      review={\MR{1847589}},
}

\bib{Case-Gover17}{unpublished}{
      author={Case, Jeffrey~S.},
      author={Gover, A.~Rod},
       title={The {$P'$}-operator, the {$Q'$}-curvature, and the {CR} tractor
  calculus},
        note={{\tt arXiv:1709.08057}},
}

\bib{Case-Yang13}{article}{
      author={Case, Jeffrey~S.},
      author={Yang, Paul},
       title={A {P}aneitz-type operator for {CR} pluriharmonic functions},
        date={2013},
        ISSN={2304-7909},
     journal={Bull. Inst. Math. Acad. Sin. (N.S.)},
      volume={8},
      number={3},
       pages={285\ndash 322},
      review={\MR{3135070}},
}

\bib{Cheng-Lee90}{article}{
      author={Ch\^eng, Jih~Hsin},
      author={Lee, John~M.},
       title={The {B}urns-{E}pstein invariant and deformation of {CR}
  structures},
        date={1990},
        ISSN={0012-7094},
     journal={Duke Math. J.},
      volume={60},
      number={1},
       pages={221\ndash 254},
         url={http://dx.doi.org/10.1215/S0012-7094-90-06008-9},
      review={\MR{1047122}},
}

\bib{Fefferman-Graham12}{book}{
      author={Fefferman, Charles},
      author={Graham, C.~Robin},
       title={The ambient metric},
      series={Annals of Mathematics Studies},
   publisher={Princeton University Press, Princeton, NJ},
        date={2012},
      volume={178},
        ISBN={978-0-691-15313-1},
      review={\MR{2858236}},
}

\bib{Fulton98}{book}{
      author={Fulton, William},
       title={Intersection theory},
     edition={Second},
      series={Ergebnisse der Mathematik und ihrer Grenzgebiete. 3. Folge. A
  Series of Modern Surveys in Mathematics [Results in Mathematics and Related
  Areas. 3rd Series. A Series of Modern Surveys in Mathematics]},
   publisher={Springer-Verlag, Berlin},
        date={1998},
      volume={2},
        ISBN={3-540-62046-X; 0-387-98549-2},
         url={http://dx.doi.org/10.1007/978-1-4612-1700-8},
      review={\MR{1644323}},
}

\bib{Gauntlett-Martelli-Sparks-Waldram04}{article}{
      author={Gauntlett, Jerome~P.},
      author={Martelli, Dario},
      author={Sparks, James},
      author={Waldram, Daniel},
       title={Sasaki-{E}instein metrics on {$S\sp 2\times S\sp 3$}},
        date={2004},
        ISSN={1095-0761},
     journal={Adv. Theor. Math. Phys.},
      volume={8},
      number={4},
       pages={711\ndash 734},
         url={http://projecteuclid.org/euclid.atmp/1117750699},
      review={\MR{2141499}},
}

\bib{Gover06}{article}{
      author={Gover, A.~Rod},
       title={Laplacian operators and {$Q$}-curvature on conformally {E}instein
  manifolds},
        date={2006},
        ISSN={0025-5831},
     journal={Math. Ann.},
      volume={336},
      number={2},
       pages={311\ndash 334},
         url={http://dx.doi.org/10.1007/s00208-006-0004-z},
      review={\MR{2244375}},
}

\bib{Gover-Graham05}{article}{
      author={Gover, A.~Rod},
      author={Graham, C.~Robin},
       title={C{R} invariant powers of the sub-{L}aplacian},
        date={2005},
        ISSN={0075-4102},
     journal={J. Reine Angew. Math.},
      volume={583},
       pages={1\ndash 27},
         url={http://dx.doi.org/10.1515/crll.2005.2005.583.1},
      review={\MR{2146851}},
}

\bib{Graham84}{article}{
      author={Graham, C.~Robin},
       title={Compatibility operators for degenerate elliptic equations on the
  ball and {H}eisenberg group},
        date={1984},
        ISSN={0025-5874},
     journal={Math. Z.},
      volume={187},
      number={3},
       pages={289\ndash 304},
         url={http://dx.doi.org/10.1007/BF01161947},
      review={\MR{757471}},
}

\bib{Graham-Hirachi05}{incollection}{
      author={Graham, C.~Robin},
      author={Hirachi, Kengo},
       title={The ambient obstruction tensor and {$Q$}-curvature},
        date={2005},
   booktitle={Ad{S}/{CFT} correspondence: {E}instein metrics and their
  conformal boundaries},
      series={IRMA Lect. Math. Theor. Phys.},
      volume={8},
   publisher={Eur. Math. Soc., Z\"urich},
       pages={59\ndash 71},
         url={http://dx.doi.org/10.4171/013-1/3},
      review={\MR{2160867}},
}

\bib{Graham-Lee88}{article}{
      author={Graham, C.~Robin},
      author={Lee, John~M.},
       title={Smooth solutions of degenerate {L}aplacians on strictly
  pseudoconvex domains},
        date={1988},
        ISSN={0012-7094},
     journal={Duke Math. J.},
      volume={57},
      number={3},
       pages={697\ndash 720},
         url={http://dx.doi.org/10.1215/S0012-7094-88-05731-6},
      review={\MR{975118}},
}

\bib{Guillarmou-Moroianu-Schlenker}{article}{
      author={Guillarmou, Colin},
      author={Moroianu, Sergiu},
      author={Schlenker, Jean-Marc},
       title={The renormalized volume and uniformization of conformal
  structures},
        date={2016},
     journal={J. Inst. Math. Jussieu},
       pages={1\ndash 60},
         url={https://doi.org/10.1017/S1474748016000244},
}

\bib{Hirachi14}{article}{
      author={Hirachi, Kengo},
       title={{$Q$}-prime curvature on {CR} manifolds},
        date={2014},
        ISSN={0926-2245},
     journal={Differential Geom. Appl.},
      volume={33},
      number={suppl.},
       pages={213\ndash 245},
         url={http://dx.doi.org/10.1016/j.difgeo.2013.10.013},
      review={\MR{3159959}},
}

\bib{Hirachi-Marugame-Matsumoto17}{article}{
      author={Hirachi, Kengo},
      author={Marugame, Taiji},
      author={Matsumoto, Yoshihiko},
       title={Variation of total {Q}-prime curvature on {CR} manifolds},
        date={2017},
        ISSN={0001-8708},
     journal={Adv. Math.},
      volume={306},
       pages={1333\ndash 1376},
         url={http://dx.doi.org/10.1016/j.aim.2016.11.005},
      review={\MR{3581332}},
}

\bib{Jenni84}{article}{
      author={Jenni, Felix},
       title={\"{U}ber den ersten {E}igenwert des {L}aplace-{O}perators auf
  ausgew\"ahlten {B}eispielen kompakter {R}iemannscher {F}l\"achen},
        date={1984},
        ISSN={0010-2571},
     journal={Comment. Math. Helv.},
      volume={59},
      number={2},
       pages={193\ndash 203},
         url={http://dx.doi.org/10.1007/BF02566345},
      review={\MR{749104}},
}

\bib{Lee88}{article}{
      author={Lee, John~M.},
       title={Pseudo-{E}instein structures on {CR} manifolds},
        date={1988},
        ISSN={0002-9327},
     journal={Amer. J. Math.},
      volume={110},
      number={1},
       pages={157\ndash 178},
         url={http://dx.doi.org/10.2307/2374543},
      review={\MR{926742}},
}

\bib{Marugame18}{article}{
      author={Marugame, Taiji},
       title={Some remarks on the total {CR} {$Q$} and {$Q'$}-curvatures},
        date={2018},
        ISSN={1815-0659},
     journal={SIGMA Symmetry Integrability Geom. Methods Appl.},
      volume={14},
       pages={Paper No. 010, 8},
         url={https://doi.org/10.3842/SIGMA.2018.010},
      review={\MR{3763373}},
}

\bib{Matsumoto13}{article}{
      author={Matsumoto, Yoshihiko},
       title={A {GJMS} construction for 2-tensors and the second variation of
  the total {$Q$}-curvature},
        date={2013},
        ISSN={0030-8730},
     journal={Pacific J. Math.},
      volume={262},
      number={2},
       pages={437\ndash 455},
         url={http://dx.doi.org/10.2140/pjm.2013.262.437},
      review={\MR{3069069}},
}

\bib{Smyth67}{article}{
      author={Smyth, Brian},
       title={Differential geometry of complex hypersurfaces},
        date={1967},
        ISSN={0003-486X},
     journal={Ann. of Math. (2)},
      volume={85},
       pages={246\ndash 266},
         url={http://dx.doi.org/10.2307/1970441},
      review={\MR{0206881}},
}

\bib{Sparks11}{incollection}{
      author={Sparks, James},
       title={Sasaki-{E}instein manifolds},
        date={2011},
   booktitle={Surveys in differential geometry. {V}olume {XVI}. {G}eometry of
  special holonomy and related topics},
      series={Surv. Differ. Geom.},
      volume={16},
   publisher={Int. Press, Somerville, MA},
       pages={265\ndash 324},
         url={http://dx.doi.org/10.4310/SDG.2011.v16.n1.a6},
      review={\MR{2893680}},
}

\bib{Strohmaier-Uski13}{article}{
      author={Strohmaier, Alexander},
      author={Uski, Ville},
       title={An algorithm for the computation of eigenvalues, spectral zeta
  functions and zeta-determinants on hyperbolic surfaces},
        date={2013},
        ISSN={0010-3616},
     journal={Comm. Math. Phys.},
      volume={317},
      number={3},
       pages={827\ndash 869},
         url={http://dx.doi.org/10.1007/s00220-012-1557-1},
      review={\MR{3009726}},
}

\bib{Tromba92}{book}{
      author={Tromba, Anthony~J.},
       title={Teichm\"uller theory in {R}iemannian geometry},
      series={Lectures in Mathematics ETH Z\"urich},
   publisher={Birkh\"auser Verlag, Basel},
        date={1992},
        ISBN={3-7643-2735-9},
         url={http://dx.doi.org/10.1007/978-3-0348-8613-0},
        note={Lecture notes prepared by Jochen Denzler},
      review={\MR{1164870}},
}

\end{biblist}
\end{bibdiv}

\end{document}